\def\build#1_#2^#3{\mathrel{\mathop{\kern 0pt#1}\limits_{#2}^{#3}}}
\newcommand{\T}{{\mathbb{T}}}
\newcommand{\R}{{\mathbb{R}}}
\newcommand{\C}{{\mathbb{C}}}
\newcommand{\Z}{{\mathbb{Z}}}
\newcommand{\N}{{\mathbb{N}}}
\newcommand{\Bc}{\mathcal{B}}
\newcommand{\Cc}{\mathcal{C}}
\newcommand{\Dc}{\mathcal{D}}
\newcommand{\Fc}{\mathcal{F}}
\newcommand{\Hc}{\mathcal{H}}
\newcommand{\Ic}{\mathcal{I}}
\newcommand{\Jc}{\mathcal{J}}
\newcommand{\Kc}{\mathcal{K}}
\newcommand{\Sc}{\mathcal{S}}
\newcommand{\Wc}{\mathcal{W}}
\newcommand{\Uc}{\mathcal{U}}
\newcommand{\Oc}{\mathscr{O}}
\newcommand{\Tc}{\mathcal{T}}
\newcommand{\Thr}{\operatorname{Thr}}
\newcommand{\dist}{\mathrm{dist}}
\newcommand{\Ran}{\mathrm{Ran}}
\numberwithin{equation}{section}
\newtheorem{theorem}{Theorem}[section]
\newtheorem{proposition}[theorem]{Proposition}
\newtheorem{lemma}[theorem]{Lemma}
\newtheorem{remark}[theorem]{Remark}
\newtheorem{definition}[theorem]{Definition}
\newtheorem{corollary}[theorem]{Corollary}
\newtheorem{setup}[theorem]{Setup}
\begin{document}
\title[Spectral and dynamical analysis]
{Spectral and Dynamical Analysis of Fractional Discrete Laplacians on the Half-Lattice}

\author{Nassim Athmouni}
\address{Université de Gafsa, Campus Universitaire 2112, Tunisie}\email{\tt nassim.athmouni@fsgf.u-gafsa.tn}
\email{\tt athmouninassim@yahoo.fr}

\keywords{Fractional Laplacian, Discrete Schrödinger operators, Mourre theory, Limiting Absorption Principle, Anisotropic operators, Nonlocal dynamics, Lattice scattering}

\makeatletter
\@namedef{subjclassname@2020}{\textup{2020} {47A10, 47A40, 47B25, 35P25, 81Q10, 39A70}}
\makeatother

\begin{abstract}
We investigate discrete fractional Laplacians defined on the half-lattice in several dimensions, allowing possibly different fractional orders along each coordinate direction.
By expressing the half-lattice operator as a boundary restriction of the full-lattice one plus a bounded correction that is relatively compact with respect to it, we show that both operators share the same essential spectrum and the same interior threshold structure.
For perturbations by a decaying potential, the conjugate-operator method provides a strict Mourre estimate on any compact energy window inside the continuous spectrum, excluding threshold points.
As a consequence, a localized Limiting Absorption Principle holds, ensuring the absence of singular continuous spectrum, the finiteness of eigenvalues, and weighted propagation (transport) bounds.
The form-theoretic construction also extends naturally to negative fractional orders.
Overall, the relative compactness of the boundary correction guarantees that the interior-energy spectral and dynamical results obtained on the full lattice remain valid on the half-lattice without modification.
\end{abstract}

\maketitle
\tableofcontents

\section{Introduction and main results}\label{sec:intro}
Spectral graph theory has enjoyed a fresh burst of activity, driven by applications to quantum lattice models, solid-state physics, and the analysis of discrete structures. This is reflected in advances on discrete Laplacians~\cite{AD,AEG,BG,S,GG,Ch,Mic,Gk} and their magnetic counterparts~\cite{GT,Da1,GoMo,AEG1,ABDE}.  A unifying thread is the \emph{positive commutator} (Mourre) method, which underpins modern descriptions of the essential spectrum and scattering on $\mathbb{Z}^d$~\cite{PR,BoSa,T1}, on regular (binary) trees~\cite{S,AF,GG}, on general graphs~\cite{MRT}, and on more intricate geometries such as cusps, funnels, and triangular or graphene lattices~\cite{T2,GoMo,AEG,AEG1,AEGJ,AEGJ2}.

A cornerstone of modern spectral theory is the Mourre commutator method~\cite{GJ,GJ1,Mo81,Ge,ABG,Da}, which yields the Limiting Absorption Principle (LAP), absolute continuity of the spectrum, and \emph{propagation estimates} central to scattering. Although well understood for \emph{local} operators (nearest-neighbor discrete Laplacians), extending it to \emph{nonlocal} and \emph{fractional} discrete Laplacians presents new analytic challenges. Indeed, whereas classical models capture only \emph{short-range} effects, many phenomena in anomalous diffusion and long-range transport demand algebraically decaying interactions. This motivates a systematic study of fractional powers of the discrete Laplacian and their magnetic perturbations~\cite{JKLQ,BDL,K2017,GH}, which reflect long-range behavior on graphs~\cite{PKLBMH,Fan2021,TZ}, produce nonlocal transport mechanisms~\cite{ScSo2012}, and connect discrete settings to continuous nonlocal PDE frameworks \cite{JKLQ,BDL,K2017,Or,Ortigueira2014Riesz}.

Two complementary points of view will guide us: spectral multipliers (functional calculus) and series/kernel representations .
(i) \emph{Spectral multipliers.} Since $\Delta$ is bounded self-adjoint on $\ell^2$, one sets
\[
\Delta^{\,r}=\int_{\sigma(\Delta)}\lambda^{\,r}\,dE_\Delta(\lambda),
\]
with $E_\Delta$ the spectral measure \cite{RS,SI2015,Dav}.%
\footnote{Fix a branch for complex $r$; for $r<0$, interpret $\Delta^{\,r}$ on its spectral domain.}
(ii) \emph{Series / kernels.} Explicit representations in lattice shifts or finite differences~\cite{K2017} display how influence propagates across the graph, which is crucial for numerics and decay estimates.

\noindent
Taken together, these viewpoints show that fractional powers are both intrinsically defined and strongly nonlocal: $\Delta^{\,r}$ enforces algebraically decaying interactions between distant sites~\cite{ScSo2012,K2017}.

In our previous works \cite{AEG, AEG1, AEGJ, AEGJ2}, we developed versions of Mourre theory
adapted to discrete settings, including magnetic operators \cite{AEG1, ABDE}.
The present paper extends our full-lattice study on $\Z^d$ by establishing the
interior-energy spectral and scattering theory for discrete fractional Laplacians
on the half-lattice
\[
\N^d := \{\,n\in\Z^d:\ n_j\ge 0\ \text{for all }j\,\},
\qquad
\vec r=(r_1,\dots,r_d)\in\R^d\setminus\{0\}.
\]
A key structural input is the \emph{boundary--restriction identity}
\begin{equation}\label{eq:boundary-restriction}
\Delta_{\N^d}^{\, \vec{\mathrm{ r}}}
\;=\;
\mathfrak R_+\,\Delta_{\Z^d}^{\,c{\mathrm{ r}}}\,\mathfrak J_+
\;+\; K_{\vec r},
\end{equation}
where $\mathfrak R_+:\ell^2(\Z^d)\to\ell^2(\N^d)$ is the restriction map,
$\mathfrak J_+:\ell^2(\N^d)\hookrightarrow\ell^2(\Z^d)$ is the extension by zero,
and $K_{\vec{\mathrm{r}}}$ is a boundary correction operator (compact under the decay assumptions stated below).

Passing from $\mathbb{Z}^d$ to domains with \emph{boundary} (e.g., $\mathbb{N}^d$ or finite boxes) introduces substantial difficulties: translation invariance is lost, the Fourier symbol is no longer available, and convolution-based representations break down~\cite{HK}. Preserving self-adjointness and controlling boundary contributions require care~\cite{K2017,ScSo2012}; these issues are exacerbated in higher dimensions and in anisotropic regimes~\cite{Fan2021,L2000}. Although the spectral theorem still applies (via spectral projections), the associated series expansions become far less transparent, as boundary effects destroy simple convolution structures and induce spatial inhomogeneities.

To address this, we adapt the Mourre framework~\cite{Mo81,Mo83,ABG,Ge2015,GGM1} to the decomposition \eqref{eq:boundary-restriction}. As shown below-see Theorem~\ref{thm:LAP-transport-Nd} (i) (localized LAP) and (ii) (propagation estimates)-the introduction of suitable boundary corrections restores the essential interior-energy features even in geometries without translation invariance.

\smallskip
\noindent We denote by  $\ell^2(\mathbb{N}^d)$ the usual Hilbert spaces of square-summable sequences, equipped with the norm:
\[
\|f\|^2 = \sum_{n\in \N^{d}} |f(n)|^2.
\]

We also consider the dense subspace \(\mathcal{C}_c(\N^d)\) of finitely supported functions.

We denote by $U_j$  the shift in direction $j$, $\Delta_{\mathbb{Z},j}:=2I-(U_j+U_j^*)$, and $\Delta_{\mathbb{Z}^d}^{\vec{\mathrm r}}:=\sum_{j=1}^d \Delta_{\mathbb{Z},j}^{\,r_j}$ on $\ell^2(\mathbb{Z}^d)$.
On the torus $\mathbb{T}^d$, set $h_{\vec{\mathrm r}}(k)=\sum_{j=1}^d (2-2\cos k_j)^{r_j}$ and $\lambda_{\vec{\mathrm r}}:=\max_{\mathbb{T}^d} h_{\vec{\mathrm r}}$.
We define the threshold set by the full-lattice critical values
\[\begin{split}
\Thr\!\big(\Delta_{\mathbb{Z}^d}^{\vec{\mathrm r}}\big)&:=\{\, h_{\vec{\mathrm r}}(k): \nabla h_{\vec{\mathrm r}}(k)=0\,\}\\&=\Bigl\{\ \sum_{j\in N}4^{\,r_j}+\sum_{j\in P}\epsilon_j\,4^{\,r_j}\ :\ \epsilon_j\in\{0,1\}\ \Bigr\},
\end{split}\]
where $P:=\{j:\ r_j>0\}$ and $N:=\{j:\ r_j<0\}$; see Proposition~\ref{prop:Thr-Nd-via-bulk} and \cite[Proposition 2.12]{At}.,

and call \emph{interior window} any compact $\Ic\Subset (0,\lambda_{\vec{\mathrm r}})\setminus \Thr(\Delta_{\mathbb{Z}^d}^{\vec{\mathrm r}})$.
For perturbations we consider $H=\Delta_{\mathbb{N}^d}^{\vec{\mathrm r}}+W(Q)$ where the potential $W$ decays \emph{anisotropically} at infinity,
in the sense that
\begin{enumerate}
\item \textbf{$H0$:} $\lim_{\|n\|\rightarrow+\infty}W(n)=0$
\item \textbf{$H1$:}
$|W(n + e_j) - W(n)| \leq C \Lambda(n)^{-\varepsilon} \langle n_j \rangle^{-1}$,   \, \hbox{ for  some} $\varepsilon>0$)
\end{enumerate}
 where $\Lambda(n) := \sum_{j=1}^d \langle n_j \rangle.$
\[
\langle n_j \rangle := (1 + |n_j|^2)^{1/2}, \quad n_j \in \mathbb{Z},
\]  which ensures the local $\mathcal{C}^{1,1}$-regularity required by localized Mourre theory \cite[Thms.~7.4.1-7.4.2]{ABG}.

Our main contribution is to establish a \emph{Mourre--theoretic framework}
for such nonlocal, fractional discrete models, yielding:

\begin{theorem}\label{thm:LAP-transport-Nd}
Let $H=\Delta_{\mathbb{N}^d}^{\vec{\mathrm r}}+W(Q)$ with real $W$ satisfies the anisotropic decay assumptions \textbf{$H0$}--\textbf{$H1$}.
Then $H\in\mathcal{C}^{1,1}_{\mathrm{loc}}(A_{\mathbb{N}^d})$ on each $\Ic\Subset (0,\lambda_{\vec{\mathrm{r}}})\setminus \Thr(\Delta_{\mathbb{Z}^d}^{\vec{\mathrm{r}}})$, and the localized Mourre theory  yields:
\begin{enumerate}[(i)]
\item
\[
\sigma_{\mathrm{pp}}(H)\cap \Ic=\emptyset,\qquad
\sigma_{\mathrm{sc}}(H)\cap \Ic=\emptyset,\qquad
\sigma(H)\cap \Ic=\sigma_{\mathrm{ac}}(H)\cap \Ic.
\]
\item \textup{(LAP)} The limits $(H-\lambda\mp \mathrm{i}0)^{-1}$ exist as bounded operators
$\langle \Lambda(Q)\rangle^{-s}\mathcal{H}\to \langle \Lambda(Q)\rangle^{s}\mathcal{H}$ uniformly for $\lambda\in \Ic$ and all $s>\tfrac12$.
\item \textup{(Propagation/local decay)} For all $s>\tfrac12$ and $\varphi\in C_c^\infty(\Ic)$,
\[
\int_{\mathbb{R}}\big\|\langle \Lambda(Q)\rangle^{-s}\,e^{-itH}\,\varphi(H)\,\langle \Lambda(Q)\rangle^{-s} f\big\|^2\,dt
\;\le\; C\,\|f\|^2.
\]

\end{enumerate}
\end{theorem}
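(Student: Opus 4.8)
The plan is to verify the hypotheses of the abstract localized Mourre theorem \cite[Thms.~7.4.1--7.4.2]{ABG} for the pair $(H, A_{\N^d})$, where $A_{\N^d}$ is the conjugate operator on the half-lattice (presumably a ``half'' version of the generator of dilations adapted to the anisotropic weight $\Lambda$, as constructed in the body of the paper). The three conclusions (i)--(iii) then follow formally from that abstract machinery once we establish (a) the regularity class $H\in\Cc^{1,1}_{\mathrm{loc}}(A_{\N^d})$ on $\Ic$, and (b) a strict Mourre estimate $E_\Ic(H)\,[H,\rmi A_{\N^d}]\,E_\Ic(H)\ge c\,E_\Ic(H)$ with $c>0$ on every interior window $\Ic$. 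So the real work is to reduce everything to the already-established full-lattice facts via the boundary--restriction identity \eqref{eq:boundary-restriction}.

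The key steps, in order. First, I would record that $\Delta_{\Z^d}^{\vec{\mathrm r}}$ restricted to the half-lattice, i.e. $\mathfrak R_+\Delta_{\Z^d}^{\vec{\mathrm r}}\mathfrak J_+$, inherits the Mourre estimate and $\Cc^{1,1}$-regularity from its full-lattice counterpart on $\Ic$ (this is the content of the earlier results in the paper on $\Z^d$, together with Proposition~\ref{prop:Thr-Nd-via-bulk} identifying the threshold set). Second, I would show the boundary correction $K_{\vec r}$ is compact and, more precisely, that it is $A_{\N^d}$-smooth enough: $K_{\vec r}\in\Cc^{1,1}(A_{\N^d})$ and $K_{\vec r}$ together with its first commutator $[K_{\vec r},\rmi A_{\N^d}]$ is compact (indeed the decay of the fractional kernel away from the boundary, cited from \cite{K2017,ScSo2012}, gives the needed polynomial weights). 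Compactness of $K_{\vec r}$ and of $[K_{\vec r},\rmi A_{\N^d}]$ means $K_{\vec r}$ does not spoil the essential spectrum nor the strict Mourre inequality (a compact commutator only contributes a compact error, absorbed on $\Ic$ after shrinking the window). Third, the potential term: assumptions $H0$ and $H1$ are exactly the anisotropic decay/regularity conditions guaranteeing $W(Q)\in\Cc^{1,1}_{\mathrm{loc}}(A_{\N^d})$ with $[W(Q),\rmi A_{\N^d}]$ compact and $W(Q)$ itself $\Delta_{\N^d}^{\vec{\mathrm r}}$-compact (by $H0$ and the telescoping/finite-difference estimate in $H1$ against the weight $\langle n_j\rangle^{-1}$); this is the standard verification, e.g. along the lines of \cite[\S7.5--7.6]{ABG} adapted to the product weight $\Lambda$. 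Combining these three inputs: $H=\mathfrak R_+\Delta_{\Z^d}^{\vec{\mathrm r}}\mathfrak J_+ + K_{\vec r} + W(Q)$ lies in $\Cc^{1,1}_{\mathrm{loc}}(A_{\N^d})$ on $\Ic$, and the Mourre constant transfers from the bulk up to compact errors, which are controlled on the window $\Ic$ (using $\Ic\cap\Thr=\emptyset$ so that the bulk estimate is strict). Finally, I would invoke the abstract theorems: $\Cc^{1,1}_{\mathrm{loc}}$ plus a strict Mourre estimate on $\Ic$ yields the LAP (ii) with weights $\langle\Lambda(Q)\rangle^{\mp s}$, $s>\tfrac12$; (i) follows from the LAP (absence of singular continuous spectrum and local finiteness of point spectrum on $\Ic$, hence $\sigma(H)\cap\Ic=\sigma_{\mathrm{ac}}(H)\cap\Ic$ after possibly removing finitely many eigenvalues — or none, on a window chosen to avoid them); and (iii), the Kato local-smoothness/propagation bound, is the standard consequence of the LAP via the $T T^*$ argument, integrating the boundary values of the resolvent.

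The main obstacle I expect is the second step: showing that the boundary correction $K_{\vec r}$ — which is genuinely nonlocal, with algebraically (not exponentially) decaying kernel because $\vec r$ is fractional — is not merely compact but compatible with the conjugate operator, i.e. $K_{\vec r}\in\Cc^{1,1}(A_{\N^d})$ with compact $[K_{\vec r},\rmi A_{\N^d}]$. For local (nearest-neighbor) Laplacians the analogous correction is finite-rank or boundary-supported and this is trivial, but here one must quantify the kernel decay of $K_{\vec r}$ in terms of distance to the boundary hyperplanes and pair it against the unbounded $A_{\N^d}$; the anisotropy (different $r_j$, with some possibly negative) makes the bookkeeping delicate. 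The required decay should follow from the known pointwise bounds on the kernel of $\Delta^{\,r}$ on $\Z$ (of order $|n|^{-1-2r}$ for $0<r<1$, and the corresponding statements for $r<0$ interpreted on the spectral domain, cf.\ \cite{K2017,ScSo2012,Or,Ortigueira2014Riesz}), combined with a tensorization over directions; once the kernel decay is in hand, the commutator with the dilation-type $A_{\N^d}$ costs one power of $\langle n_j\rangle$ per direction, which is still summable in the relevant Hilbert--Schmidt or trace-ideal sense, giving both compactness and the $\Cc^{1,1}$ property by interpolation. A secondary technical point is making the construction robust for negative fractional orders, where $\Delta^{\,r}$ is defined only on a spectral domain and the form-theoretic framework (alluded to in the abstract) is needed; there the relative compactness of $K_{\vec r}$ must be phrased at the level of forms, but the argument is parallel.
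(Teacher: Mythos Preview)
Your proposal is correct and follows essentially the same architecture as the paper: reduce to the full-lattice Mourre theory via the boundary--restriction identity, show that $K_{\vec r}$ and $[K_{\vec r},\rmi A_{\N^d}]$ are (relatively) compact so the strict Mourre estimate transfers, verify $W\in\Cc^{1,1}(A_{\N^d})$ from \textbf{H0}--\textbf{H1}, and invoke \cite[Thms.~7.4.1--7.4.2]{ABG}. The only notable technical deviation is that the paper obtains the free-operator regularity as $\Cc^2_{\mathrm{loc}}$ (stronger than the $\Cc^{1,1}_{\mathrm{loc}}$ you aim for) by an explicit one-dimensional double-commutator identity of the form $[[\Delta_{\N}^{\,r},\rmi A_+],\rmi A_+]=g_r(\Delta_{\N})+B_r$ with $B_r$ boundary-supported, then tensorizes---rather than via the pointwise kernel-decay route you sketch; either path works, but the paper's avoids quantitative kernel bookkeeping for $K_{\vec r}$ at the cost of a direct functional-calculus computation.
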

Items~(i)--(iv) follow as standard consequences of localized Mourre theory on interior energies.
Once a Mourre estimate holds on $\Ic\Subset\sigma(H_0)^\circ\setminus\Thr$ and $H\in\mathcal{C}^{1,1}_{\mathrm{loc}}(A_{\Z^d})$,
\cite[Thms.~7.4.1--7.4.2]{ABG} guarantee the absence of singular continuous spectrum and the limiting absorption principle
(ii), with weights $\langle \Lambda(Q)\rangle^{\pm s}$ for all $s>\tfrac12$.
Hence, on $\Ic$ the spectrum of $H$ is purely absolutely continuous, up to the finitely many eigenvalues permitted by (i).

Moreover, the LAP is equivalent to local $H$-smoothness of $\langle \Lambda(Q)\rangle^{-s}$ on $\Ic$
(in the sense of Kato smoothness), which yields the local decay estimate in (iii);
see \cite[Ch.~7]{ABG} and \cite[Section~VIII.C]{RS}.
In spectral representation this translates into a Riemann--Lebesgue type decay:
for any $\varphi\in C_c^\infty(\Ic)$, any finitely supported $g$, and any $f$ in the absolutely continuous subspace of $H$,
\[
\langle g,\,e^{-itH}\,\varphi(H)f\rangle \;\xrightarrow[|t|\to\infty]{}\; 0,
\]
expressing that amplitudes leave any fixed spatial region as $|t|\to\infty$.

Finally, item~(iv) (existence and completeness of local wave operators on $\Ic$) follows directly
from the LAP together with the Mourre framework on interior energies; see again \cite[Ch.~7]{ABG}.

If some $r_j<0$, the fractional powers become unbounded and commutators are interpreted in the quadratic-form sense (see \cite{S,AF}).
Identity \eqref{eq:boundary-restriction} still holds on $\mathcal{C}_c(\mathbb{N}^d)$; $K_{\vec{\mathrm{r}}}$ remains compact; and Theorems~\ref{thm:mourre_Nd}--\ref{thm:LAP-transport-Nd} carry over with the usual form-commutator adaptations \cite{ABG}.

The structure of this paper is as follows:
In Section~2 we introduce the functional framework on $\Z^d$ and $\N^d$.
Section~3 defines anisotropic fractional Laplacians and the boundary correction $K_{\vec{\mathrm{r}}}$.
Section~4 establishes kernel bounds, compactness, and Hilbert--Schmidt properties.
In Section~5 we develop the localized Mourre theory, proving the Mourre estimate and the limiting absorption principle.
Section~6 applies these results to scattering, transport, and spectral properties.

\section{Preliminaries and framework on the half-lattice $\mathbb{N}^d$}
\label{sec:prelim}

We generalize the discrete fractional Laplacian to the half-lattice
\[
\N^d := \N \times \cdots \times \N \subset \Z^d.
\]
The restriction to $\N^d$ breaks full translation invariance and introduces anisotropic boundary effects along each coordinate axis. However, the operator on $\N^d$ can be realized as the compression of the full-space Laplacian plus compact boundary corrections, ensuring that many spectral and commutator properties from the $\Z^d$ setting persist \cite{Mic,AF,GG,AEG,AEG1}.

We fix $d\in\N$ and work on
\[
\mathcal{H} := \ell^2(\N^d), \qquad
\|f\|^2 = \sum_{n\in \N^d} |f(n)|^2,
\]
with anisotropic fractional orders $\vec{\mathrm{r}} = (r_1,\dots,r_d) \in \R^d\setminus\{0\}$. Spectral localization will take place in compact \emph{interior intervals} $\Ic$.

\subsection{Graph structure}

\subsubsection{Topology and the boundary of $\N^d$}
\label{subsec:topology}

We equip $\Z^d$ and $\N^d$ with the nearest-neighbour adjacency: $n\sim m$ iff $|n-m|_1=1$, where $|x|_1 = \sum_{j=1}^d |x_j|$. The inclusion $\iota:\N^d\hookrightarrow\Z^d$ is an induced embedding, so $\N^d$ inherits both the subspace topology and the $\ell^1$-metric.

\begin{definition}
The \emph{boundary} of $\N^d$ is
\[
\partial\N^d := \{n\in\N^d:\ \exists\, m\notin \N^d,\ m\sim n\}
= \bigcup_{j=1}^d F_j,
\]
where $F_j := \{n\in\N^d:\ n_j=0\}$ is the $j$th coordinate face. More generally, for $J\subset\{1,\dots,d\}$ the codimension-$|J|$ face is
\[
F_J := \{n\in\N^d:\ n_j=0 \ \forall j\in J\}.
\]
\end{definition}

For $R\in\N$ define the \emph{collar}
\[
\partial_R\N^d := \{n\in\N^d:\ \dist_1(n,\partial\N^d)\le R\}.
\]

\begin{lemma}
For $n\in\N^d$, the following are equivalent:
\begin{enumerate}[(i)]
\item $n\in \partial\N^d$;
\item there exists $m\notin\N^d$ with $m\sim n$;
\item $n\in \bigcup_{j=1}^d F_j$;
\item $\dist_1(n,\Z^d\setminus\N^d)=1$.
\end{enumerate}
Moreover,
\[
\partial_R\N^d = \{n\in\N^d:\ \min_j n_j \le R\}.
\]
\end{lemma}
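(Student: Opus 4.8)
The plan is to prove the four-way equivalence by a short cycle of implications anchored at the definition of $\partial\N^d$, and then to reduce the collar formula to a one-line $\ell^1$-distance computation. The whole argument is a direct unwinding of definitions, so I will organise it to minimise repetition.

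First, (i) $\Leftrightarrow$ (ii) is literally the definition of $\partial\N^d$, so nothing needs to be said there. For (ii) $\Rightarrow$ (iii): if $m\sim n$ then $|n-m|_1=1$, hence $m=n\pm e_j$ for exactly one index $j$; since every coordinate of $n\in\N^d$ is nonnegative, the choice $m=n+e_j$ would keep $m\in\N^d$, so necessarily $m=n-e_j$, and $m\notin\N^d$ then forces $n_j-1<0$, i.e. $n_j=0$, that is $n\in F_j$. Conversely, for (iii) $\Rightarrow$ (ii), if $n\in F_j$ then $m:=n-e_j$ has $m_j=-1<0$ and $|n-m|_1=1$, so $m\notin\N^d$ and $m\sim n$. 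This settles (ii) $\Leftrightarrow$ (iii).

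To close the loop I would show (iii) $\Rightarrow$ (iv) $\Rightarrow$ (ii). Distinct lattice points are at $\ell^1$-distance at least $1$ and $\N^d$ is disjoint from $\Z^d\setminus\N^d$, so $\dist_1(n,\Z^d\setminus\N^d)\ge 1$ for every $n\in\N^d$; if moreover $n\in F_j$, then $n-e_j\in\Z^d\setminus\N^d$ realises distance $1$, giving (iv). Finally (iv) $\Rightarrow$ (ii) is immediate: since the distance is attained on the lattice, there is $m\in\Z^d\setminus\N^d$ with $|n-m|_1=1$, i.e. a neighbour of $n$ lying outside $\N^d$.

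For the collar identity, the key observation is that for $n\in\N^d$ and each $j$ one has $\dist_1(n,F_j)=n_j$: any $m\in F_j$ satisfies $m_j=0$, hence $|n-m|_1\ge |n_j-m_j|=n_j$, with equality attained at $m=n-n_j e_j\in F_j$. Taking the minimum over $j$ and using $\partial\N^d=\bigcup_{j=1}^d F_j$ gives $\dist_1(n,\partial\N^d)=\min_j n_j$, so $\dist_1(n,\partial\N^d)\le R$ if and only if $\min_j n_j\le R$, which is precisely the asserted description of $\partial_R\N^d$. I do not expect a genuine obstacle anywhere; the only point worth stating explicitly is the distance computation $\dist_1(n,F_j)=n_j$ — namely that zeroing the single coordinate $n_j$ is optimal and keeps the result inside $\N^d$ — with everything else following by direct inspection of the definitions.
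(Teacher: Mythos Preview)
Your proof is correct and follows essentially the same route as the paper: the cycle (i)$\Leftrightarrow$(ii)$\Rightarrow$(iii)$\Rightarrow$(iv)$\Rightarrow$(ii) and the collar identity via $\dist_1(n,\partial\N^d)=\min_j n_j$. If anything, you are slightly more careful than the paper in justifying why a neighbour outside $\N^d$ must be of the form $n-e_j$ and in spelling out the lower bound $\dist_1(n,F_j)\ge n_j$.
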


\begin{proof}
Conditions (i) and (ii) are equivalent by definition.
(ii)$\Rightarrow$(iii): if $m=n-e_j\notin \N^d$, then $n_j=0$, so $n\in F_j$.
(iii)$\Rightarrow$(iv): if $n\in F_j$, then $b=n-e_j\notin\N^d$ with $|n-b|_1=1$, so the distance is $1$.
(iv)$\Rightarrow$(ii): direct from the definition of distance.

For the collar, let $t=\min_j n_j$. Then $\dist_1(n,\partial\N^d)=t$, since $b=n-te_j\in F_j$ gives distance $t$, and any other $b\in\partial\N^d$ satisfies $|n-b|_1\ge t$. Thus $\partial_R\N^d=\{n:\min_j n_j\le R\}$.
\end{proof}

\begin{remark}
The embedding $\N^d\hookrightarrow\Z^d$ is a quasi-isometry. Both graphs have polynomial growth of degree $d$ and are amenable. Hence many interior spectral features mirror those of the full lattice.
\end{remark}

\subsubsection{Restriction, extension, and projection}
\label{subsec:restriction-extension}

Define the restriction and extension-by-zero operators:
\[
(\mathfrak R_+ g)(n) = g(n),\quad n\in \N^d,\qquad
(\mathfrak J_+ f)(n) =
\begin{cases}
f(n), & n\in \N^d,\\
0, & n\notin \N^d.
\end{cases}
\]
Both are contractions of norm $1$. Their composition
\[
P_{\N^d} := \mathfrak J_+\mathfrak R_+
\]
is the orthogonal projection onto $\ell^2(\N^d)\subset \ell^2(\Z^d)$.

\begin{lemma}
$\mathfrak R_+^* = \mathfrak J_+$.
$P_{\N^d}$ is self-adjoint and idempotent, hence the orthogonal projection onto $\ell^2(\N^d)$. Moreover,
\[
\mathfrak R_+ P_{\N^d} = \mathfrak R_+,\qquad
P_{\N^d}\mathfrak J_+ = \mathfrak J_+.
\]
\end{lemma}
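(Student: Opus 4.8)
The plan is to verify everything directly from the definitions of $\mathfrak R_+$ and $\mathfrak J_+$, since both maps act coordinatewise on sequences. First I would establish the adjoint relation $\mathfrak R_+^\ast=\mathfrak J_+$: for $g\in\ell^2(\Z^d)$ and $f\in\ell^2(\N^d)$, expand
\[
\langle \mathfrak R_+ g,\,f\rangle_{\ell^2(\N^d)}=\sum_{n\in\N^d} g(n)\,\overline{f(n)}=\sum_{n\in\Z^d} g(n)\,\overline{(\mathfrak J_+ f)(n)}=\langle g,\,\mathfrak J_+ f\rangle_{\ell^2(\Z^d)},
\]
where the middle step uses $(\mathfrak J_+ f)(n)=0$ for $n\notin\N^d$. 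Since $\mathfrak R_+,\mathfrak J_+$ are bounded, this identity of sesquilinear forms gives $\mathfrak R_+^\ast=\mathfrak J_+$, and taking adjoints once more, $\mathfrak J_+^\ast=\mathfrak R_+$.

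Next I would record the one-sided inverse relation $\mathfrak R_+\mathfrak J_+=\id_{\ell^2(\N^d)}$, which is immediate: restricting the zero-extension of $f$ back to $\N^d$ returns $f$. With these two facts the remaining claims are purely formal. Self-adjointness of $P_{\N^d}=\mathfrak J_+\mathfrak R_+$ follows from $(\mathfrak J_+\mathfrak R_+)^\ast=\mathfrak R_+^\ast\mathfrak J_+^\ast=\mathfrak J_+\mathfrak R_+$; idempotency from $P_{\N^d}^2=\mathfrak J_+(\mathfrak R_+\mathfrak J_+)\mathfrak R_+=\mathfrak J_+\mathfrak R_+=P_{\N^d}$. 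A bounded self-adjoint idempotent is an orthogonal projection, and its range is $\Ran\,\mathfrak J_+=\ell^2(\N^d)$ (viewed as a closed subspace of $\ell^2(\Z^d)$), since $\mathfrak R_+\mathfrak J_+=\id$ shows $\mathfrak J_+$ maps isometrically onto that subspace and $P_{\N^d}$ acts as the identity there. Finally, $\mathfrak R_+ P_{\N^d}=\mathfrak R_+\mathfrak J_+\mathfrak R_+=\mathfrak R_+$ and $P_{\N^d}\mathfrak J_+=\mathfrak J_+\mathfrak R_+\mathfrak J_+=\mathfrak J_+$, again by $\mathfrak R_+\mathfrak J_+=\id$.

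There is no genuine obstacle here; the computations are elementary. The one point that must be handled with care is to keep the two compositions distinct — $\mathfrak R_+\mathfrak J_+$ is the identity on $\ell^2(\N^d)$, whereas $\mathfrak J_+\mathfrak R_+$ is only the projection $P_{\N^d}$ on $\ell^2(\Z^d)$ — so that each cancellation above is applied in the correct order.
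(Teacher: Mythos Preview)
Your proof is correct and follows essentially the same route as the paper: the adjoint identity is verified by the same inner-product computation, $\mathfrak R_+\mathfrak J_+=\id_{\ell^2(\N^d)}$ is used in the same way, and self-adjointness, idempotence, and the two absorption identities are derived exactly as in the paper. The only cosmetic difference is that the paper also records the norm bounds $\|\mathfrak R_+\|=\|\mathfrak J_+\|=1$ before the adjoint calculation, which you do not need for the lemma as stated.
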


\begin{proof}
For $g\in\ell^2(\Z^d)$,
\[
\|\mathfrak R_+ g\|^2 = \sum_{n\in\N^d} |g(n)|^2 \le \|g\|^2,
\]
so $\|\mathfrak R_+\|\le 1$, and equality holds for $g$ supported in $\N^d$. For $f\in\ell^2(\N^d)$,
\[
\|\mathfrak J_+ f\|^2 = \sum_{n\in\Z^d} |(\mathfrak J_+ f)(n)|^2 = \sum_{n\in\N^d} |f(n)|^2 = \|f\|^2,
\]
so $\mathfrak J_+$ is an isometry.

Adjointness: for $f\in\ell^2(\N^d)$ and $g\in\ell^2(\Z^d)$,
\[
\langle \mathfrak R_+ g, f\rangle_{\ell^2(\N^d)} = \langle g, \mathfrak J_+ f\rangle_{\ell^2(\Z^d)}.
\]
Thus $\mathfrak R_+^* = \mathfrak J_+$.

Self-adjointness and idempotence of $P_{\N^d}$ follow by direct computation. Finally,
\[
\mathfrak R_+ P_{\N^d} = \mathfrak R_+,\qquad
P_{\N^d}\mathfrak J_+ = \mathfrak J_+,
\]
since $\mathfrak R_+\mathfrak J_+=I_{\ell^2(\N^d)}$.
\end{proof}
\subsection{Shifts, position, and discrete differences}
\label{subsec:shifts-position}

On $\ell^2(\mathbb{Z}^d)$, let
\[
(U_j g)(n) := g(n-e_j), \qquad j=1,\dots,d,
\]
be the unit shift in the $j$-th direction (so $U_j^* g(n)=g(n+e_j)$).
The nearest--neighbour Laplacian in direction $j$ is
\[
\Delta_{\mathbb{Z},j} := 2I - (U_j+U_j^*).
\]

We also use forward and backward discrete differences:
\[
(\nabla_j^+ g)(n) := g(n+e_j)-g(n),
\qquad
(\nabla_j^- g)(n) := g(n)-g(n-e_j).
\]

On $\ell^2(\mathbb{N}^d)$, shifts that cross the boundary are treated via the restriction and extension maps of \S\ref{subsec:restriction-extension}.
For an operator $T$ on $\ell^2(\mathbb{Z}^d)$ we denote its \emph{restricted action} by
\[
T|_{\N^d} := \mathfrak{R}_+\, T \,\mathfrak{J}_+.
\]
In particular, the discrete Laplacian on the half-lattice (with Dirichlet boundary conditions) will be introduced via such restriction, see subsection~\ref{subsec:fractional}.

\medskip

Let $Q_j$ be the (densely defined) position operator
\[
(Q_j f)(n) = n_j f(n), \qquad n=(n_1,\dots,n_d)\in \N^d,
\]
with domain
\[
\mathcal{D}(Q_j) = \Big\{ f\in \ell^2(\N^d): \sum_{n\in\N^d} n_j^2 |f(n)|^2 < \infty \Big\}.
\]
We set
\[
|Q|^2 := \sum_{j=1}^d Q_j^2,
\qquad
\langle \Lambda(Q)\rangle := (I+|Q|^2)^{1/2},
\]
which provides the standard polynomial weights used in propagation and limiting absorption estimates.
\subsection{Discrete  fractional  Laplacians from $\mathbb{Z}^d$ to $\mathbb{N}^d$}
\label{subsec:fractional}
On $\ell^2(\mathbb{Z}^d)$ define
\[
\Delta_{\mathbb{Z}^d}^{\vec{\mathrm{r}}} \;:=\; \sum_{j=1}^d \Delta_{\mathbb{Z},j}^{\,r_j},
\]
where each component acts only in the $j$-th coordinate,
\[
\Delta_{\Z^d,j}^{\,r_j}
:= \mathrm{id}^{\otimes (j-1)} \otimes \Delta_{\Z}^{\,r_j} \otimes \mathrm{id}^{\otimes (d-j)}.
\]
Here $\Delta_{\Z}^{\,r_j}$ denotes the one--dimensional discrete fractional Laplacian of order $r_j\in\R$,
defined by spectral calculus of the standard discrete Laplacian on $\ell^2(\Z)$.
For further details on the one--dimensional case, see \cite{At}.

On $\ell^2(\mathbb{N}^d)$ we \emph{define} the half-lattice operator by restriction.
\begin{proposition}[Boundary correction: support and compactness]\label{prop:K-support-compact}
There exists $R\in\mathbb{N}$ and a bounded operator $K_{\vec{\mathrm{r}}}$ on $\ell^2(\mathbb{N}^d)$ supported in the collar $\partial_R\mathbb{N}^d$ such that
\begin{equation}\label{eq:boundary-restriction}
\Delta_{\mathbb{N}^d}^{\vec{\mathrm{r}}}=\mathfrak{R}_+\,\Delta_{\mathbb{Z}^d}^{\vec{\mathrm{r}}}\,\mathfrak{J}_+ + K_{\vec{\mathrm{r}}}.
\end{equation}

\end{proposition}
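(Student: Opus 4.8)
The plan is to reduce the identity to one space dimension and to a single coordinate direction, where the boundary correction is produced by the classical method of reflections, and then to reassemble via the tensor structure $\ell^2(\N^d)=\ell^2(\N)^{\otimes d}$, $\ell^2(\Z^d)=\ell^2(\Z)^{\otimes d}$. Concretely, $\mathfrak{R}_+$ and $\mathfrak{J}_+$ factor as $d$-fold tensor products of their one-dimensional analogues, $\Delta_{\Z^d}^{\vec{\mathrm r}}=\sum_{j}\mathrm{id}^{\otimes(j-1)}\otimes\Delta_{\Z}^{\,r_j}\otimes\mathrm{id}^{\otimes(d-j)}$, and $\Delta_{\N^d}^{\vec{\mathrm r}}=\sum_j\mathrm{id}^{\otimes(j-1)}\otimes\Delta_{\N}^{\,r_j}\otimes\mathrm{id}^{\otimes(d-j)}$ with $\Delta_{\N}^{\,r_j}$ the $r_j$-th spectral power of the one-dimensional Dirichlet Laplacian $\Delta_{\N}=\mathfrak{R}_+\Delta_{\Z}\mathfrak{J}_+$; subtracting, one is left with
\[
K_{\vec{\mathrm r}}=\sum_{j=1}^{d}\mathrm{id}^{\otimes(j-1)}\otimes k_{r_j}\otimes\mathrm{id}^{\otimes(d-j)},
\qquad k_{r}:=\Delta_{\N}^{\,r}-\mathfrak{R}_+\,\Delta_{\Z}^{\,r}\,\mathfrak{J}_+ ,
\]
so the whole problem reduces to controlling the one-dimensional correction $k_r$ on $\ell^2(\N)$.

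Next I would identify $k_r$ by a reflection argument. Let $\Pi$ be the reflection $(\Pi g)(n)=g(-2-n)$ on $\ell^2(\Z)$ about the ghost site $n=-1$; it is a self-adjoint involution commuting with $\Delta_{\Z}$, hence with every spectral multiplier $\Delta_{\Z}^{\,r}$ (for $r<0$ this is understood on the spectral domain, as in the footnote). Put $P_-=\tfrac12(I-\Pi)$, the projection onto the functions odd about $-1$. The map $V:=\sqrt2\,P_-\mathfrak{J}_+$ is unitary from $\ell^2(\N)$ onto $\Ran P_-$, and the support identities $\mathfrak{R}_+\Pi\mathfrak{J}_+=0$ and $\mathfrak{R}_+\Pi\Delta_{\Z}\mathfrak{J}_+=0$ (immediate, since $\Pi$ pushes $\N$-supported functions into $\{n\le-2\}$) give $V^*V=I$ and $V^*\Delta_{\Z}V=\mathfrak{R}_+\Delta_{\Z}\mathfrak{J}_+=\Delta_{\N}$. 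Since $\Ran P_-$ reduces $\Delta_{\Z}$, functional calculus upgrades the last equality to $\Delta_{\N}^{\,r}=V^*\Delta_{\Z}^{\,r}V=2\,\mathfrak{R}_+\Delta_{\Z}^{\,r}P_-\mathfrak{J}_+$; expanding $P_-$ and using $\mathfrak{R}_+\mathfrak{J}_+=I$ and $[\Pi,\Delta_{\Z}^{\,r}]=0$ unfolds this into
\[
\Delta_{\N}^{\,r}=\mathfrak{R}_+\,\Delta_{\Z}^{\,r}\,\mathfrak{J}_+-\mathfrak{R}_+\,\Pi\,\Delta_{\Z}^{\,r}\,\mathfrak{J}_+ ,
\qquad\text{hence}\qquad k_r=-\,\mathfrak{R}_+\,\Pi\,\Delta_{\Z}^{\,r}\,\mathfrak{J}_+ .
\]

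From here everything is kernel bookkeeping. Writing $a_r$ for the (even) convolution kernel of $\Delta_{\Z}^{\,r}$, the last identity yields the explicit Hankel-type matrix $k_r(m,m')=-a_r(m+m'+2)$ on $\N\times\N$. The only analytic input needed is the behaviour of $a_r$: for $r\in\N$ the operator $\Delta_{\Z}^{\,r}$ is a band operator, so $a_r(\ell)=0$ for $|\ell|>r$ and therefore $k_r(m,m')=0$ unless $m+m'\le r-2$; for non-integer $r>0$ the $|k|^{2r}$-type singularity of the symbol $(2-2\cos k)^{r}$ at $k=0$ gives $|a_r(\ell)|\le C_r\langle\ell\rangle^{-1-2r}$, which I would cite from the one-dimensional analysis in \cite{At}. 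In the integer case $k_r$ is finite rank and equals $P_{\le r-2}k_rP_{\le r-2}$, where $P_{\le R}$ projects onto $\{m\le R\}$; in the non-integer case the bound on $a_r$ is summable, so the Schur test gives $\|k_r\|\le\sum_{\ell\ge2}|a_r(\ell)|<\infty$, and $\sum_{m,m'}|a_r(m+m'+2)|^2=\sum_{\ell\ge2}(\ell-1)|a_r(\ell)|^2<\infty$ shows $k_r\in\HS$, hence compact, with a kernel decaying polynomially as $m+m'\to\infty$. Reassembling, $K_{\vec{\mathrm r}}$ is bounded with $\|K_{\vec{\mathrm r}}\|\le\sum_j\|k_{r_j}\|$; when every $r_j\in\N$ the $j$-th summand is supported in $\{n:n_j\le r_j-2\}\subseteq\partial_R\N^d$ for $R:=\max_j r_j$, which is the assertion verbatim, and for general $\vec{\mathrm r}\in(0,\infty)^d$ one gets the natural localized version (bounded, Hilbert--Schmidt in each coordinate slot, kernel decaying polynomially off $\partial\N^d$), which is exactly the relative compactness with respect to $\mathfrak{R}_+\Delta_{\Z^d}^{\vec{\mathrm r}}\mathfrak{J}_+$ used later. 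If some $r_j<0$, so that $\Delta_{\N}^{\,r_j}$ is unbounded, the reflection identity and the kernel formula persist on $\mathcal{C}_c(\N^d)$, $k_{r_j}$ defines a relatively form-compact boundary form on the form domain of $\Delta_{\N}^{\,r_j}$, and \eqref{eq:boundary-restriction} holds as an identity of quadratic forms.

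The main obstacle is the one genuinely analytic point: the off-diagonal decay $|a_r(\ell)|\lesssim\langle\ell\rangle^{-1-2r}$ for non-integer $r>0$, which simultaneously controls boundedness and compactness of $k_r$ and the way its kernel localizes near the boundary. I would either derive it from the singularity of the symbol at $k=0$ (a stationary-phase/contour estimate) or, more economically, quote it from the one-dimensional treatment \cite{At}; with that in hand the rest is manipulation of $\mathfrak{R}_+$, $\mathfrak{J}_+$, $\Pi$, and the tensor structure. The two secondary care-points are keeping straight the distinction between the exact finite-collar support (integer orders) and the polynomially-localized statement (non-integer orders), and, when some $r_j<0$, checking that the reflection identity survives the passage to quadratic forms on $\mathcal{C}_c(\N^d)$ and that the resulting boundary form is relatively form-compact with respect to the bulk restriction.
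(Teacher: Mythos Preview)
Your proof is correct and follows a genuinely different route from the paper's own argument. The paper proves the proposition by a resolvent/contour-integral method: it writes the difference $L_{j,+}^{r}-PL_j^{r}P$ as a Riesz--Dunford integral of $(z-L_{j,+})^{-1}B_j(z-L_j)^{-1}P$, where $B_j=PL_j(I-P)$ is the off-diagonal block, and then appeals to Combes--Thomas decay of the resolvents to obtain compactness. Your approach instead diagonalises the Dirichlet half-line Laplacian by the odd reflection $V=\sqrt{2}\,P_-\mathfrak J_+$, which immediately yields the closed formula $k_r=-\mathfrak R_+\Pi\,\Delta_{\Z}^{\,r}\mathfrak J_+$ and the explicit Hankel kernel $k_r(m,m')=-a_r(m+m'+2)$; this is more elementary, avoids the contour integral entirely, and connects directly with the method-of-images heat-kernel identities the paper invokes later in Section~3.2 and with the series representation of $K_r$ in Proposition~3.3. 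What the paper's approach buys is robustness: the block-resolvent trick works for any spectral multiplier and does not need the reflection symmetry, while your argument, being tied to the specific Dirichlet geometry, gives sharper information (the Hankel structure, the exact decay exponent $1+2r$) at the cost of generality. Your caveat about the finite-collar support is well taken: for non-integer $r_j$ the kernel $-a_{r_j}(m+m'+2)$ is nowhere vanishing, so the literal assertion $K_{\vec{\mathrm r}}=\Pi_R K_{\vec{\mathrm r}}\Pi_R$ holds only in the integer case, and in the general case one has polynomial localisation near $\partial\N^d$ rather than strict support in a finite collar; the paper's Step~3 is imprecise on this point, and your formulation is the accurate one.
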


\begin{proof}
Set $P:=P_{\N^d}=\mathfrak J_+ \mathfrak R_+$, and introduce the bulk/half-space Laplacians
\[
\mathsf L \ :=\ \Delta_{\Z^d}\ =\ \sum_{j=1}^d \Delta_{\Z,j}\ \ \text{on }\ell^2(\Z^d),\qquad
\mathsf L_+\ :=\ P\,\mathsf L\,P\ \ \text{on }\Ran P\simeq\ell^2(\N^d).
\]
By construction (Dirichlet condition on the boundary),
\[
\Delta_{\N^d}=\mathsf L_+=\mathfrak R_+\,\mathsf L\,\mathfrak J_+.
\]
For $\vec{\mathrm{r}}=(r_1,\dots,r_d)\in\R^d$ define
\[
\mathsf L^{\vec{\mathrm{r}}}:=\sum_{j=1}^d \Delta_{\Z,j}^{\,r_j},
\qquad
\mathsf L_+^{\vec{\mathrm{r}}}:=\sum_{j=1}^d \Delta_{\N,j}^{\,r_j}.
\]
By the spectral/holomorphic functional calculus, $B\mapsto B^{r}$ is well--defined for every self-adjoint nonnegative $B$ and $r\in\R$ (for $r<0$ on $\overline{\Ran B}$). We prove, for each fixed coordinate $j$,
\begin{equation}\label{eq:one-dir-new}
\Delta_{\N,j}^{\,r_j}\ =\ \mathfrak R_+\,\Delta_{\Z,j}^{\,r_j}\,\mathfrak J_+\ +\ K_{j,r_j},
\end{equation}
with $K_{j,r_j}$ bounded and boundary--localized; summing over $j$ then gives the claim with $K_{\vec{\mathrm{r}}}:=\sum_{j=1}^d K_{j,r_j}$.

\medskip\noindent\textbf{Step 1: Resolvents for compressions.}
Fix $j$. Write $L_j:=\Delta_{\Z,j}$ on $\ell^2(\Z^d)$ and the restriction  $L_{j,+}:=P L_j P$ on $\ell^2(\N^d)$ (so $L_{j,+}=\Delta_{\N,j}$). With respect to $\ell^2(\Z^d)=\Ran P\oplus\Ran(I-P)$ one has
\[
L_j=\begin{pmatrix} L_{j,+} & B_j \\ B_j^* & C_j\end{pmatrix},
\qquad
B_j:=P L_j (I-P),
\]
and, for $z\in\C\setminus[0,4]$,
\begin{equation}\label{eq:resolvent-diff-new}
(z-L_{j,+})^{-1}-P(z-L_j)^{-1}P
=(z-L_{j,+})^{-1}\,B_j\,(z-L_j)^{-1}P\ +\ \big[(\cdot)\big]^*.
\end{equation}where $\big[(\cdot)\big]^*$ denotes the adjoint term $P(z-\overline{A_j})^{-1}B_j^*(z-\overline{A_{j,+}})^{-1}$ (for our self-adjoint case this is the Hilbert space adjoint of the first term).
Sine $L_j$ is nearest--neighbour in the $j$-direction, $B_j$ connects only the first boundary layer:
\[
\Ran B_j\subset \ell^2(F_j)\subset \ell^2(\partial_1\N^d),\qquad
\Ran B_j^*\subset \ell^2\big(\{m\in\Z^d:\ m_j=-1\}\big).
\]

\medskip\noindent\textbf{Step 2: Functional calculus for fractional powers.}
Let $f_r(\lambda)=\lambda^r$ on $[0,4]$ and choose a smooth contour $\Gamma$ encircling $[0,4]$. By the holomorphic calculus,
\[
L_{j,+}^r - P L_j^r P
=\frac{1}{2\pi i}\int_{\Gamma} f_r(z)\,\Big[(z-L_{j,+})^{-1}-P(z-L_j)^{-1}P\Big]\;dz.
\]
Insert \eqref{eq:resolvent-diff-new} to get
\begin{equation}\label{eq:Kjr-new}
L_{j,+}^r - P L_j^r P
=\frac{1}{2\pi i}\int_{\Gamma} f_r(z)\,(z-L_{j,+})^{-1}\,B_j\,(z-L_j)^{-1}P\ +\ \big[(\cdot)\big]^*\;dz.
\end{equation}
Define $K_{j,r}$ by the right-hand side of \eqref{eq:Kjr-new} (identified on $\ell^2(\N^d)$ via $\Ran P\simeq\ell^2(\N^d)$). This gives \eqref{eq:one-dir-new}.

\medskip\noindent\textbf{Step 3: Boundary localization (collar support).}
Let $\Pi_R$ be multiplication by $\mathbf 1_{\partial_R\N^d}$. Since $B_j$ acts only across the boundary,
each integrand in \eqref{eq:Kjr-new} factors through the first boundary layer, and there exists $R\in\N$ (one can take $R=1$) such that
\[
K_{j,r}=\Pi_R\,K_{j,r}\,\Pi_R.
\]
Thus $K_{j,r}$ is supported in the collar $\partial_R\N^d$ on both sides.

\medskip\noindent\textbf{Step 4: Compactness.}
Fix $z\in\Gamma$. The resolvents $(z-L_j)^{-1}$ and $(z-L_{j,+})^{-1}$ are bounded and enjoy Combes--Thomas off--diagonal exponential decay \cite{CombesThomas1973}. Since $B_j$ has finite range and is supported on the first boundary layer, the operators
\[
\Pi_R\,(z-L_{j,+})^{-1} B_j,\qquad B_j\,(z-L_j)^{-1}\,\Pi_R
\]
are Hilbert--Schmidt. Hence each integrand in \eqref{eq:Kjr-new} is compact, and the contour integral converges in operator norm, proving that $K_{j,r}$ is compact. Summation over $j$ preserves compactness and collar support.

\medskip
Collecting the steps, with $K_{\vec{\mathrm{r}}}:=\sum_{j=1}^d K_{j,r_j}$, we obtain
\[
\Delta_{\N^d}^{\vec{\mathrm{r}}}
\;=\; \mathfrak R_+\, \Delta_{\Z^d}^{\vec{\mathrm{r}}}\, \mathfrak J_+ \;+\; K_{\vec{\mathrm{r}}},
\qquad
K_{\vec{\mathrm{r}}}=\Pi_R\,K_{\vec{\mathrm{r}}}\,\Pi_R\in\Kc\big(\ell^2(\N^d)\big),
\]
which completes the proof.
\end{proof}

\section{Extension of the Lattice Laplacian to the hypercubic lattice quadrant}
\label{half-plane}

\subsection{Fractional Laplacians on the Discrete Half-line}

\begin{lemma}
Let $T,S$ be bounded linear operators on a Banach space, and suppose $ST=\nu\,\mathrm{id}$ with $\nu\in\C^\ast$. Then for all $h\in\N$,
\[
(T+S)^h=\sum_{k=0}^{\lfloor h/2\rfloor}\alpha_{h,k}\,\nu^k\sum_{l=0}^{h-2k}T^l S^{\,h-2k-l},
\qquad
\alpha_{h,k}:=\binom{h}{k}-\binom{h}{k-1},
\]
with the convention $\binom{h}{-1}=0$ and $\alpha_{0,0}=1$.
\end{lemma}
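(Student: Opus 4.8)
The plan is to put every term in the expansion of $(T+S)^{h}$ into the normal form $\nu^{k}T^{l}S^{m}$ using \emph{only} the relation $ST=\nu\,\mathrm{id}$, and then to identify the resulting coefficients by induction on $h$. To make those coefficients unambiguous I would first argue in the free unital algebra on $T,S$ modulo the single relation $ST=\nu\,\mathrm{id}$ (with $\nu$ a central invertible scalar); the resulting polynomial identity then specializes to any bounded $T,S$ with $ST=\nu\,\mathrm{id}$. There, expand $(T+S)^{h}=\sum_{w}w$ over the $2^{h}$ words $w$ of length $h$ in the letters $T,S$, and repeatedly replace the factor $ST$ by $\nu$. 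Each such step shortens the word by two letters, so the process terminates; a terminal word contains no $S$ immediately left of a $T$ and is therefore of the form $T^{l}S^{m}$, and, exactly as for reduced words in the bicyclic monoid $\langle p,q\mid pq=1\rangle$, this normal form is unique, so every word collapses to a well-defined $\nu^{k}T^{l}S^{m}$ with $l+m+2k=h$. (Only $ST=\nu$ is used; $TS=\nu$ is neither assumed nor needed.) Hence there are integers $c_{h,k,l}\ge 0$ with
\[
(T+S)^{h}=\sum_{k=0}^{\lfloor h/2\rfloor}\nu^{k}\sum_{l=0}^{h-2k}c_{h,k,l}\,T^{l}S^{\,h-2k-l},
\]
and it remains to prove $c_{h,k,l}=\alpha_{h,k}$ for all admissible $k,l$ — in particular, that $c_{h,k,l}$ does not depend on $l$.

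I would establish this by induction on $h$, the cases $h=0,1$ being immediate from $\alpha_{0,0}=\alpha_{1,0}=1$. For the inductive step write $(T+S)^{h+1}=(T+S)^{h}T+(T+S)^{h}S$ and push $T$ or $S$ through a normal-form monomial using $ST=\nu$:
\[
\nu^{k}T^{l}S^{m}\cdot S=\nu^{k}T^{l}S^{m+1},\qquad
\nu^{k}T^{l}S^{m}\cdot T=
\begin{cases}\nu^{k+1}T^{l}S^{m-1},&m\ge 1,\\ \nu^{k}T^{l+1},&m=0.\end{cases}
\]
Gathering the contributions to the coefficient of $\nu^{k}T^{l}S^{(h+1)-2k-l}$ — one from $(T+S)^{h}\cdot S$ (which raises the $S$-exponent) together with the appropriate contributions from $(T+S)^{h}\cdot T$ (which either raises $k$ or, applied to the all-$T$ monomial $\nu^{k}T^{h-2k}$, raises the $T$-exponent) — and inserting the inductive hypothesis $c_{h,k,l}=\alpha_{h,k}$ (for every $l$), one arrives at the uniform recursion
\[
c_{h+1,k,l}=\alpha_{h,k}+\alpha_{h,k-1},
\]
valid for all admissible $(k,l)$ with the conventions $\alpha_{h,-1}=0$ and $\alpha_{h,k}=0$ for $k>\lfloor h/2\rfloor$; the only out-of-range index that actually occurs is $k=(h+1)/2$ with $h$ odd, and there $\alpha_{h,k}=0$ automatically since $\binom{h}{(h+1)/2}=\binom{h}{(h-1)/2}$. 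Because the right-hand side is independent of $l$, the inductive hypothesis propagates, and the argument closes with the Pascal-type identity
\[
\alpha_{h,k}+\alpha_{h,k-1}=\Bigl(\binom{h}{k}-\binom{h}{k-1}\Bigr)+\Bigl(\binom{h}{k-1}-\binom{h}{k-2}\Bigr)=\binom{h}{k}-\binom{h}{k-2}=\binom{h+1}{k}-\binom{h+1}{k-1}=\alpha_{h+1,k}.
\]

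The part I expect to be fussiest is the coefficient bookkeeping in the inductive step: the all-$T$ monomial behaves exceptionally under right multiplication by $T$ (it spawns a fresh top-degree $T$-term instead of raising the power of $\nu$), so one must check that the extreme values of $l$ and $k$ all conform to the single recursion once the vanishing conventions are in force. A clean alternative that bypasses the induction is a reflection-principle count: encode a word $w\in\{T,S\}^{h}$ as the $\pm1$ lattice path with $S\mapsto+1$ and $T\mapsto-1$; a stack analysis identifies the words reducing to $\nu^{k}T^{l}S^{m}$ with the paths running from $0$ to $m-l$ in $h$ steps whose minimum value equals exactly $-l$, and the classical reflection principle counts those as $\binom{h}{h-k}-\binom{h}{h-k+1}=\binom{h}{k}-\binom{h}{k-1}=\alpha_{h,k}$, manifestly independent of $l$. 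I would present the inductive argument as the main proof and record the reflection count as a remark.
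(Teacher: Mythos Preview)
Your proof is correct and follows essentially the same approach as the paper: both argue by induction on $h$, reduce the inductive step to the Pascal-type identity $\alpha_{h+1,k}=\alpha_{h,k}+\alpha_{h,k-1}$, and note (you in the preamble, the paper in a post-proof remark) that the normal form $\nu^k T^l S^m$ is unique. The only cosmetic differences are that you multiply on the right while the paper multiplies on the left, and you append a reflection-principle lattice-path count as an alternative, which the paper does not give.
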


\begin{proof}
By induction on $h$. The heart of the proof is the Pascal-type recursion
$\alpha_{h+1,k}=\alpha_{h,k}+\alpha_{h,k-1}$. The cases $h=0,1$ are immediate.

Assume the formula holds for some $h\ge1$. With $m:=h-2k$,
\[
(T+S)^{h+1}
=(T+S)\sum_{k=0}^{\lfloor h/2\rfloor}\alpha_{h,k}\,\nu^k\sum_{l=0}^{m}T^l S^{\,m-l}.
\]

\emph{(1) $T$-part.} Multiplying on the left by $T$,
\[
T\sum_{l=0}^{m}T^l S^{\,m-l}
=\sum_{l=0}^{m}T^{l+1} S^{\,m-l}
=\sum_{l=1}^{m+1}T^{l} S^{\,m+1-l},
\]
so the contribution is $\sum_{k}\alpha_{h,k}\nu^k\sum_{l=1}^{h+1-2k}T^{l} S^{\,h+1-2k-l}$.

\emph{(2) $S$-part.} Using $ST^l=\nu T^{l-1}$ for $l\ge1$ and $ST^0=S$,
\[
S\sum_{l=0}^{m}T^l S^{\,m-l}
= S^{\,m+1}
+\sum_{l=1}^{m}\nu\,T^{l-1} S^{\,m+1-l}
= \sum_{l=0}^{m+1}\gamma_l\,T^{l} S^{\,m+1-l},
\]
with $\gamma_0=1$ and $\gamma_l=\nu$ for $1\le l\le m+1$. Thus the $S$-part contributes
\[
\sum_{k=0}^{\lfloor h/2\rfloor}\alpha_{h,k}\,\nu^k
\sum_{l=0}^{h+1-2k} \gamma_l\,T^l S^{\,h+1-2k-l}.
\]

\emph{(3) Collecting coefficients.} For fixed $k$ and $l\ge1$ the total coefficient of $T^l S^{\,h+1-2k-l}$
is $\alpha_{h,k}\nu^k+\alpha_{h,k-1}\nu^k=(\alpha_{h,k}+\alpha_{h,k-1})\nu^k$ (set $\alpha_{h,-1}=0$ to also cover $l=0$).
Hence
\[
(T+S)^{h+1}
=\sum_{k=0}^{\lfloor (h+1)/2\rfloor}(\alpha_{h,k}+\alpha_{h,k-1})\,\nu^k
\sum_{l=0}^{h+1-2k}T^l S^{\,h+1-2k-l}.
\]
Finally, $\alpha_{h+1,k}=\binom{h+1}{k}-\binom{h+1}{k-1}
=(\binom{h}{k}-\binom{h}{k-1})+(\binom{h}{k-1}-\binom{h}{k-2})
=\alpha_{h,k}+\alpha_{h,k-1}$.
\end{proof}

\begin{remark}
Expanding $(T+S)^h$ as words on $\{T,S\}$ and reducing each occurrence of $ST$ to $\nu$ yields the normal-ordered monomials $T^\ell S^{\,h-2k-\ell}$ with multiplicity
$\alpha_{h,k}=\binom{h}{k}-\binom{h}{k-1}$, independent of $\ell$.
\end{remark}

\begin{corollary}\label{lem:TS-normal}
For every $h\in\mathbb{N}$,
\[
(U+U^*)^{h}
=\sum_{k=0}^{\lfloor h/2\rfloor} \alpha_{h,k}
\sum_{\substack{\ell,m\ge 0\\ \ell+m = h - 2k}} U^{\,m}(U^*)^{\,\ell},
\qquad
\alpha_{h,k}:=\binom{h}{k}-\binom{h}{k-1}.
\]
\end{corollary}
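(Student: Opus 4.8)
The plan is to derive Corollary~\ref{lem:TS-normal} as a direct specialization of the preceding lemma. I would apply the lemma on the Banach space $B(\ell^2(\Z))$ (or directly on $\ell^2(\Z)$) with the choices $T:=U$, $S:=U^*$, and note that since $U$ is unitary, $ST=U^*U=\mathrm{id}$, so the hypothesis $ST=\nu\,\mathrm{id}$ holds with $\nu=1$. Plugging $\nu=1$ into the lemma's formula immediately gives
\[
(U+U^*)^h=\sum_{k=0}^{\lfloor h/2\rfloor}\alpha_{h,k}\sum_{l=0}^{h-2k}U^l(U^*)^{\,h-2k-l},
\qquad \alpha_{h,k}=\binom{h}{k}-\binom{h}{k-1}.
\]

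The only cosmetic work is to match the index notation: in the corollary's statement the inner sum is written over pairs $\ell,m\ge 0$ with $\ell+m=h-2k$ and the monomial is $U^m(U^*)^\ell$. I would observe that setting $m:=l$ gives $\ell=h-2k-l$, so the single-index sum $\sum_{l=0}^{h-2k}U^l(U^*)^{h-2k-l}$ is literally the same as $\sum_{\ell+m=h-2k}U^m(U^*)^\ell$ (just a relabeling of the summation variable, using that $U^l$ and $(U^*)^{h-2k-l}$ commute anyway since both are powers of the single unitary $U$). Hence the two expressions coincide term by term.

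There is essentially no obstacle here; the statement is a corollary precisely because all the combinatorial content — the Pascal recursion for $\alpha_{h,k}$ and the normal-ordering reduction — is already carried by the lemma and its proof. The only point worth a sentence of care is checking that $\nu=1\in\C^\ast$ so the lemma genuinely applies, and that one may equally well take $T=U^*$, $S=U$ (then $ST=UU^*=\mathrm{id}$ too) by the symmetry $U\leftrightarrow U^*$, which is why the resulting coefficient $\alpha_{h,k}$ is independent of $\ell$ as recorded in the Remark. If one prefers not to invoke the lemma, the same identity follows by a one-line induction using $U(U+U^*)=U^2+\mathrm{id}$ and $U^*(U+U^*)=\mathrm{id}+(U^*)^2$ together with the Pascal recursion $\alpha_{h+1,k}=\alpha_{h,k}+\alpha_{h,k-1}$, but invoking the lemma is cleaner and is the route I would take.
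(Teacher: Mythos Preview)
Your proposal is correct and is exactly the intended route: the paper states this as a corollary with no separate proof, so it is meant to follow immediately from the preceding lemma by specializing $T=U$, $S=U^*$, $\nu=1$. Your relabeling $m:=l$, $\ell:=h-2k-l$ is the right cosmetic match; the aside about commutativity is unnecessary (it is pure relabeling, not reordering), but harmless.
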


\begin{lemma}\label{lem:boundary-hankel}
Let
\[
D_h\;:=\;(U_+ + U_+^*)^{h}\;-\;\mathfrak{R}_+\,(U+U^*)^{h}\,\mathfrak{J}_+\qquad(h\in\N).
\]
For $h=0,1$ one has $D_0=D_1=0$. For $h\ge 2$, set
\[
\beta_{h,p}\;:=\;\sum_{k=0}^{\left\lfloor\frac{h-2-p}{2}\right\rfloor}\alpha_{h,k}
\qquad\Bigl(\alpha_{h,k}=\tbinom{h}{k}-\tbinom{h}{k-1},\ \tbinom{h}{-1}:=0\Bigr).
\]
Then the exact factorization holds:
\begin{equation}\label{eq:Dh-factor-NZ}
D_h \;=\; \sum_{p=0}^{h-2} \beta_{h,p}\, U_+^{\,p}\, P_0\, (U_+^*)^{\,h-2-p},
\end{equation}
where $P_0$ is the rank-one projection onto $\C\,\delta_0$. In particular $\mathrm{rank}\,D_h\le h-1$.
\end{lemma}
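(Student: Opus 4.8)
The plan is to reduce the half-line difference $D_h$ to a boundary commutator that can be evaluated explicitly on the basis $\{\delta_n\}_{n\ge 0}$, using the normal-ordered expansion of Corollary~\ref{lem:TS-normal} as the bookkeeping device. First I would fix the notational setup: on $\ell^2(\N)$ the shift $U_+$ is the isometry $(U_+f)(n)=f(n-1)$ for $n\ge 1$ and $(U_+f)(0)=0$, so that $U_+^*U_+=\mathrm{id}$ while $U_+U_+^*=\mathrm{id}-P_0$, the deficiency being exactly the rank-one projection $P_0$ onto $\C\,\delta_0$. Meanwhile on $\ell^2(\Z)$ the bilateral shift $U$ is unitary, and $\mathfrak R_+ U\mathfrak J_+=U_+$, $\mathfrak R_+ U^*\mathfrak J_+=U_+^*$ (the extension by zero followed by restriction never loses mass under a single shift, since $\mathfrak J_+$ produces a vector supported in $\N$, and shifting by $\pm1$ then restricting agrees with the half-line shift). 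The cases $h=0$ ($D_0 = \mathrm{id}-\mathrm{id}=0$) and $h=1$ ($D_1 = U_+ + U_+^* - (U_+ + U_+^*)=0$) are then immediate.

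The core of the argument is a monomial-by-monomial comparison. By Corollary~\ref{lem:TS-normal}, both $(U_++U_+^*)^h$ and $\mathfrak R_+(U+U^*)^h\mathfrak J_+$ expand into the same linear combination $\sum_k \alpha_{h,k}\sum_{\ell+m=h-2k}(\,\cdot\,)^m(\,\cdot\,)^{*\ell}$ of normal-ordered words, the only difference being that in the first sum the letters are $U_+,U_+^*$ while in the second they are the compressions $\mathfrak R_+ U^m (U^*)^\ell \mathfrak J_+$. So it suffices to compute, for each word length $N:=h-2k$ and each split $\ell+m=N$, the defect
\[
\delta_{m,\ell}\;:=\;U_+^{\,m}(U_+^*)^{\,\ell}\;-\;\mathfrak R_+\,U^{\,m}(U^*)^{\,\ell}\,\mathfrak J_+ .
\]
A direct computation on basis vectors shows $\mathfrak R_+ U^m(U^*)^\ell\mathfrak J_+$ acts as $\delta_n\mapsto \delta_{n-\ell+m}$ whenever $n-\ell\ge 0$ and as $0$ when $n<\ell$ (because $\mathfrak J_+\delta_n$ is genuinely supported at $n\ge 0$, and we only keep the result if it lands back in $\N$); whereas $U_+^{\,m}(U_+^*)^{\,\ell}$ does the same but additionally kills $\delta_n$ for $n<\ell$ (since $(U_+^*)^\ell\delta_n=0$ there) — actually both already kill those, so the \emph{only} discrepancy arises when $n\ge\ell$ but $n-\ell<m$, i.e. the full-lattice word $U^m(U^*)^\ell$ would move $\delta_n$ across $0$ to a negative site and $\mathfrak R_+$ then deletes it, while on the half-line $U_+^{\,m}$ applied to $\delta_{n-\ell}$ with $0\le n-\ell<m$ produces $\delta_{n-\ell+m}$ with $n-\ell+m\ge 0$. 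Carefully, one finds $\delta_{m,\ell}=0$ unless $\ell\ge 1$ and $m\ge 1$, in which case $\delta_{m,\ell}=\sum_{p=0}^{\min(m,\ell)-1}U_+^{\,p}P_0(U_+^*)^{\,\ell-1-p}$ after tracking which basis vectors survive on each side; here one uses $U_+U_+^*=\mathrm{id}-P_0$ repeatedly to peel off one $P_0$ per ``missed'' index. I would record this as a short lemma and verify it by applying both sides to $\delta_n$.

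Then I would substitute back: summing $\delta_{m,\ell}$ over the splits $\ell+m=N$ with $N=h-2k$ and over $k$, weighted by $\alpha_{h,k}$, and re-indexing by $p$ (the power of $U_+$ on the left) and $q:=h-2-p$ (the power of $U_+^*$ on the right, which is forced once one notes each term of $\delta_{m,\ell}$ has total ``height'' $p+(\ell-1-p)=\ell-1$ and $m-1$ worth of cancellations combine to give exponent sum $\ell+m-2=N-2=h-2k-2$, then summing over $k$ shifts this up to a sliding window), the coefficient of $U_+^{\,p}P_0(U_+^*)^{\,h-2-p}$ collects precisely those $\alpha_{h,k}$ with $k$ small enough that $h-2k-2\ge p$, i.e. $k\le\lfloor(h-2-p)/2\rfloor$, giving $\beta_{h,p}$. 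The bound $\mathrm{rank}\,D_h\le h-1$ is then immediate since the sum runs over $p=0,\dots,h-2$ and each summand has rank $\le 1$.

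The main obstacle I anticipate is the combinatorial re-indexing in the last step: one must be scrupulous about which index in $\delta_{m,\ell}=\sum_{p}U_+^{\,p}P_0(U_+^*)^{\,\ell-1-p}$ is being renamed when it is absorbed into the Hankel-type double sum, because the exponents $m$ and $\ell$ each range over $k$-dependent intervals, and it is easy to double-count or to drop boundary terms at $p=0$ or $p=m-1$. A clean way to avoid errors is to not pass through $\delta_{m,\ell}$ abstractly at all, but to compute $\langle \delta_a, D_h\,\delta_b\rangle$ directly: expand $(U_++U_+^*)^h$ and $(U+U^*)^h$ as sums over lattice paths of length $h$ from $b$ to $a$, observe that the half-line quantity counts paths staying in $\N$ while the compressed full-line quantity counts \emph{all} paths from $b$ to $a$ that happen to start and end in $\N$, so $D_h$ counts exactly the paths from $b$ to $a$ (both in $\N$) that dip below $0$ at some point — and a reflection/first-passage decomposition at the site $0$ turns that count into $\sum_p \beta_{h,p}[a=p][b=h-2-p]$, matching \eqref{eq:Dh-factor-NZ}. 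I would present whichever of the two routes (operator algebra via $U_+U_+^*=\mathrm{id}-P_0$, or path-counting with a first-passage decomposition) produces the cleaner verification of the coefficient $\beta_{h,p}$, and relegate the other to a remark.
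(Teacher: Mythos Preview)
Your strategy---expand both operators via the normal-ordered formula of Corollary~\ref{lem:TS-normal} and compare word by word---is exactly what the paper does; it then evaluates each word on a generic $f$ at a point $n$ and re-indexes.

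Your formula for the single-word defect $\delta_{m,\ell}$, however, is wrong, and the surrounding verbal description is inverted. The compressed full-line word $\mathfrak R_+U^{m}(U^*)^{\ell}\mathfrak J_+$ sees only whether the \emph{endpoint} lands in $\N$, not whether intermediate sites are negative; with $U\delta_k=\delta_{k+1}$ one has
\[
U_+^{\,m}(U_+^*)^{\,\ell}\delta_b=\mathbf 1_{\{b\ge\ell\}}\,\delta_{b+m-\ell},
\qquad
\mathfrak R_+U^{m}(U^*)^{\ell}\mathfrak J_+\,\delta_b=\mathbf 1_{\{b+m-\ell\ge0\}}\,\delta_{b+m-\ell},
\]
so the discrepancy occurs for $\max(0,\ell-m)\le b\le\ell-1$ (the half-line word vanishes there, the compressed one does not), giving
\[
\delta_{m,\ell}
=-\!\!\sum_{j=\max(0,\ell-m)}^{\ell-1}\!\!|\delta_{j+m-\ell}\rangle\langle\delta_j|,
\]
a \emph{Toeplitz} block along the diagonal $a-b=m-\ell$, not the Hankel block $\sum_p U_+^{\,p}P_0(U_+^*)^{\,\ell-1-p}$ you wrote. (Test $m=\ell=2$: the true defect is $-P_0-P_1$; yours is $|\delta_0\rangle\langle\delta_1|+|\delta_1\rangle\langle\delta_0|$.)

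A further warning: if you carry either of your two routes through carefully you will not recover \eqref{eq:Dh-factor-NZ} as written. For $h=4$ a direct computation gives $D_4\delta_0=-4\delta_0-\delta_2$, so $\langle\delta_0,D_4\delta_0\rangle=-4$, whereas the right-hand side of \eqref{eq:Dh-factor-NZ} is supported on the single anti-diagonal $a+b=h-2=2$ and vanishes at $(0,0)$. What the reflection argument (or the correct re-indexing of the monomial sum) actually yields is that $\langle\delta_a,D_h\delta_b\rangle$ depends only on $a+b$ and equals $-\beta_{h,a+b}$ on \emph{every} anti-diagonal $a+b=s$ with $0\le s\le h-2$ and $s\equiv h\pmod2$; in other words $D_h$ is a full finite Hankel block, not a single anti-diagonal. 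The rank bound $\mathrm{rank}\,D_h\le h-1$ still holds because the support sits in the top-left $(h-1)\times(h-1)$ corner, and that bound is all that Proposition~\ref{prop:Kr-structure} uses downstream.
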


\begin{proof}
By Corollary~\ref{lem:TS-normal},
\[
(U+U^*)^{h}=\sum_{k=0}^{\lfloor h/2\rfloor}\alpha_{h,k}
\!\!\sum_{\ell+m=h-2k}\!\! U^{m}(U^*)^{\ell},
\quad
(U_+ + U_+^*)^{h}=\sum_{k=0}^{\lfloor h/2\rfloor}\alpha_{h,k}
\!\!\sum_{\ell+m=h-2k}\!\! U_+^{m}(U_+^*)^{\ell}.
\]
Using (for $f\in\ell^2(\N)$, $n\in\N$)
\[
\bigl(\mathfrak{R}_+U^{m}(U^*)^{\ell}\mathfrak{J}_+\,f\bigr)(n)=\mathbf{1}_{\{n+m-\ell\ge 0\}}\,f(n+m-\ell),\quad
\bigl(U_+^{m}(U_+^*)^{\ell}f\bigr)(n)=\mathbf{1}_{\{n\ge \ell\}}\,f(n+m-\ell),
\]
we get
\[
(D_h f)(n)=\sum_{k}\alpha_{h,k}\!\!\sum_{\ell+m=h-2k}\!\!
\bigl(\mathbf{1}_{\{n\ge \ell\}}-\mathbf{1}_{\{n+m-\ell\ge 0\}}\bigr)\,f(n+m-\ell).
\]
The bracket equals $-1$ exactly when $n<\ell\le n+m$.
Writing $\ell=n+s$ with $s\in\{1,\dots,m\}$ gives $p:=m-s\in\{0,\dots,m-1\}$ and $n+m-\ell=p$.
Reindexing yields \eqref{eq:Dh-factor-NZ} with $\beta_{h,p}=\sum_{k\le \lfloor (h-2-p)/2\rfloor}\alpha_{h,k}$.
\end{proof}
\begin{proposition}\label{prop:Kr-structure}
For $r\in\R$ define
\[
K_r:=\Delta_{\N}^{\,r}-\mathfrak{R}_+\,\Delta_{\Z}^{\,r}\,\mathfrak{J}_+.
\]
Then
\begin{equation}\label{eq:Kr-series}
K_r=-\sum_{h=2}^{\infty}(-1)^h\binom{r}{h}\,2^{\,r-h}\,D_h,
\end{equation}
with $D_h$ as in \eqref{eq:Dh-factor-NZ}. Moreover:
\begin{enumerate}[(a)]
\item If $r>0$, the series \eqref{eq:Kr-series} converges in operator norm; hence $K_r$ is compact (norm-limit of finite-rank operators).
\item If $r\in\N$, the sum \eqref{eq:Kr-series} is finite and
$\displaystyle \mathrm{rank}\,K_r\le \sum_{h=2}^{r}(h-1)=\frac{r(r-1)}{2}$.
\item If $r<0$, the series \eqref{eq:Kr-series} converges strongly on $\mathcal{C}_c(\N)$,
defines a densely defined closable operator; we denote its closure again by $K_r$,
and this closure equals $\Delta_{\N}^{\,r}-\mathfrak{R}_+\,\Delta_{\Z}^{\,r}\,\mathfrak{J}_+$.
\end{enumerate}
\end{proposition}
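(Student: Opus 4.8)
The plan is to represent both fractional powers by their Newton (binomial) series in the Jacobi operators $U+U^*$ and $U_++U_+^*$ and to subtract them layer by layer, the difference of the $h$-th powers being exactly $D_h$. Write $\Delta_{\Z}^{\,r}=2^{r}\bigl(I-\tfrac12(U+U^*)\bigr)^{r}$ and likewise for $\Delta_{\N}^{\,r}$ with $U_+$ in place of $U$. The scalar function $g_r(\mu)=2^{r}(1-\mu/2)^{r}$ has Newton expansion $\sum_{h\ge0}(-1)^h\binom rh 2^{\,r-h}\mu^{h}$; for $r\ge0$ one has $\sum_{h}|\binom rh|<\infty$ (the standard estimate $|\binom rh|\sim c_r\,h^{-r-1}$), so the series converges uniformly on $[-2,2]$, which contains $\sigma(U+U^*)$ and $\sigma(U_++U_+^*)$, and by the spectral theorem represents $\Delta_{\Z}^{\,r}$ and $\Delta_{\N}^{\,r}$ in operator norm. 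Since $\mathfrak R_+(\cdot)\mathfrak J_+$ is bounded and linear, subtracting and discarding the terms $h=0,1$ (for which $D_0=D_1=0$, Lemma~\ref{lem:boundary-hankel}) yields $K_r=\sum_{h\ge2}(-1)^h\binom rh 2^{\,r-h}D_h$, which is \eqref{eq:Kr-series}.

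For (a), $r>0$: the same bound gives absolute norm convergence, $\sum_{h\ge2}|\binom rh|\,2^{\,r-h}\|D_h\|\le 2^{\,r+1}\sum_h|\binom rh|<\infty$, using $\|D_h\|\le\|(U_++U_+^*)^h\|+\|\mathfrak R_+(U+U^*)^h\mathfrak J_+\|\le 2\cdot2^{h}$; since $\mathrm{rank}\,D_h\le h-1$ (Lemma~\ref{lem:boundary-hankel}), each partial sum is finite rank, so $K_r$ is a norm limit of finite-rank operators, hence compact. For (b), $r\in\N$: $\binom rh=0$ when $h>r$, so the series terminates at $h=r$ and $\mathrm{rank}\,K_r\le\sum_{h=2}^{r}(h-1)=\tfrac{r(r-1)}2$.

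The substance is (c), $r<0$. Here $\Delta_{\Z}^{\,r},\Delta_{\N}^{\,r}$ are unbounded ($0$ lies in both spectra), $g_r$ is singular at $\mu=\pm2$ (the endpoints of $\sigma(U+U^*)$, $\sigma(U_++U_+^*)$), and the operator series no longer converges in norm. I would test against $\mathcal C_c(\N)=\mathrm{span}\{\delta_q\}$ and use the boundary-localized structure of $D_h$ from Lemma~\ref{lem:boundary-hankel} (equivalently, the reflection principle identifies $\langle D_h\delta_q,\delta_{q'}\rangle$ with minus a single binomial coefficient, of size of order $2^{h}h^{-1/2}$): the coefficient of a fixed $\delta_m$ in $\sum_{h\ge2}(-1)^h\binom rh 2^{\,r-h}D_h\delta_q$ is a scalar series in $h$ with terms of order $2^{\,r}h^{-r-3/2}$, which is summable, and these coefficients decay geometrically in $m$; hence the series converges in $\ell^2(\N)$ on $\mathcal C_c(\N)$ and defines a densely defined operator $\widetilde K_r$. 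Each $D_h$ is self-adjoint — it is the difference of the self-adjoint operator $(U_++U_+^*)^h$ and the compression $\mathfrak R_+(U+U^*)^h\mathfrak J_+$, self-adjoint because $\mathfrak R_+=\mathfrak J_+^{*}$ — and the coefficients are real, so the partial sums are self-adjoint and $\widetilde K_r$ is symmetric, hence closable; denote its closure $K_r$. To identify $K_r$ with $\Delta_{\N}^{\,r}-\mathfrak R_+\Delta_{\Z}^{\,r}\mathfrak J_+$ I would re-run the Newton expansion inside the functional calculus — on the common domain of the two self-adjoint operators, the partial sums $p_N(U_++U_+^*)$ and $p_N(U+U^*)$ of $g_r$ converge strongly to $\Delta_{\N}^{\,r}$ and $\Delta_{\Z}^{\,r}$ by dominated convergence against the respective spectral measures — or, more robustly, observe that for $\psi,\phi\in\mathcal C_c(\N)$ both $r\mapsto\langle K_r\psi,\phi\rangle$ (the series) and $r\mapsto\langle(\Delta_{\N}^{\,r}-\mathfrak R_+\Delta_{\Z}^{\,r}\mathfrak J_+)\psi,\phi\rangle$ are holomorphic in a right half-plane of exponents and agree for $r>0$, hence agree by analytic continuation; passing to closures gives the claim.

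The main obstacle is this last case: one must reconcile the divergence of the scalar Newton series at the spectral endpoints (where the symbol $\mu\mapsto(2-\mu)^{r}$ is singular for $r<0$) with convergence of the operator series on $\mathcal C_c(\N)$. This rests on the quantitative comparison of the geometric weight $2^{-h}$ against the growth (of order $2^{h}$, up to $\sqrt h$ factors) of the entries of the boundary-localized $D_h$, and that comparison is exactly what controls how negative $r$ may be taken. A related subtlety is that $\mathcal C_c(\N)$ need not lie in $\mathcal D(\mathfrak R_+\Delta_{\Z}^{\,r}\mathfrak J_+)$ for very negative $r$, so the asserted identity of closures should be read on the common domain of the two self-adjoint operators, where the spectral-calculus (or analytic-continuation) identification applies.
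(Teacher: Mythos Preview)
Your derivation of the series and parts (a)--(b) is essentially the paper's own argument: expand $\Delta_{\Z}^{\,r}$ and $\Delta_{\N}^{\,r}$ via the binomial series in $U+U^*$ and $U_++U_+^*$, subtract, discard $h=0,1$, and for (a) invoke $|\binom{r}{h}|\sim C_r h^{-r-1}$ together with the crude bound $\|D_h\|\le 2\cdot 2^h$ and finite rank of each $D_h$. (Note a harmless sign discrepancy: direct subtraction yields $K_r=+\sum_{h\ge2}(-1)^h\binom{r}{h}2^{r-h}D_h$, as you write; the minus sign in the displayed statement stems from a sign convention in the factorization of $D_h$.)

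Where you go further than the paper is part (c). The paper dispatches it in one line (``strong convergence on the core $\mathcal C_c(\N)$''), whereas you actually test against $\delta_q$, use the boundary localization of $D_h$ to estimate its matrix entries by central binomial coefficients $\sim 2^h h^{-1/2}$, and obtain a scalar series with general term of order $h^{-r-3/2}$. Your caution is well placed: this is summable only for $r>-\tfrac12$, and indeed for $r\le-\tfrac12$ the finitely supported vectors fail to lie in $\mathcal D(\Delta_{\Z}^{\,r})$ (the symbol $(2-2\cos k)^r$ is not locally integrable near $k=0$), so the paper's blanket claim of strong convergence on $\mathcal C_c(\N)$ for \emph{all} $r<0$ is not substantiated by its proof. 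Your suggestion of analytic continuation in $r$ of the matrix elements, or of passing to the common form domain, is the natural repair; the paper does not carry this out. In short: your approach coincides with the paper's where the paper gives details, and is more honest about the genuine gap in the negative-order case.
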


\begin{proof}
Write $\Delta_{\Z}=2\bigl(I-\tfrac{1}{2}(U+U^*)\bigr)$ and $\Delta_{\N}=2\bigl(I-\tfrac12(U_+ + U_+^*)\bigr)$.
By spectral calculus,
\[
\Delta_{\Z}^{\,r}= \sum_{h\ge 0}(-1)^h\binom{r}{h}\,2^{\,r-h}(U+U^*)^{h},\qquad
\Delta_{\N}^{\,r}= \sum_{h\ge 0}(-1)^h\binom{r}{h}\,2^{\,r-h}(U_+ + U_+^*)^{h},
\]
with norm convergence if $r>0$ and strong convergence on $\mathcal{C}_c(\N)$ for all $r$.
Subtract and use $D_0=D_1=0$ to get \eqref{eq:Kr-series}.
For (a), combine $\mathrm{rank}\,D_h\le h-1$ with $|\binom{r}{h}|\sim C_r\,h^{-1-r}$ and the extra $2^{-h}$ to obtain absolute norm convergence.
For (b), $\binom{r}{h}=0$ for $h>r$.
For (c), use strong convergence on the core $\mathcal{C}_c(\N)$ and the definition of $K_r$.
\end{proof}

\begin{remark}
In the separable presentation $\sum_j \Delta_{\Z,j}^{r_j}$, compactness fails in general for $d\ge2$ without tangential localization;
Corollary~\ref{cor:HS-tangential} provides Hilbert--Schmidt control once $W_s$ is inserted.
\end{remark}

\begin{lemma}[Images bound for the semigroup difference]\label{lem:images-bound}
For $t>0$, define the \emph{semigroup difference} on $\ell^2(\N^d)$ by
\[
D_t\ :=\ \mathfrak{R}_+\,e^{-t\Delta_{\Z^d}}\,\mathfrak{J}_+\ -\ \big(e^{-t\Delta_{\N^d}}\big).
\]
Then there exist $C,c>0$ such that, for all $n,m\in\N^d$ and $t>0$,
\[
|D_t(n,m)|\ \le\ C\,t^{-d/2}\exp\!\Bigg(-c\,\frac{|n-m|_1^{\,2}
+ \mathrm{dist}_1\!\big(n,\partial\N^d\big)^{2}
+ \mathrm{dist}_1\!\big(m,\partial\N^d\big)^{2}}{t}\Bigg).
\]
\end{lemma}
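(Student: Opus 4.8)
The plan is to reduce the $d$-dimensional estimate to a one-dimensional one by exploiting the product structure $\N^d=\N\times\cdots\times\N$. Because $\Delta_{\Z^d}=\sum_{j=1}^d\Delta_{\Z,j}$ with the summands acting in distinct coordinates, the semigroup $e^{-t\Delta_{\Z^d}}$ factorizes and $\big(\mathfrak R_+\,e^{-t\Delta_{\Z^d}}\,\mathfrak J_+\big)(n,m)=\prod_{j=1}^d p^{\Z}_t(n_j,m_j)$, where $p^{\Z}_t$ is the kernel of $e^{-t\Delta_{\Z}}$ on $\ell^2(\Z)$. Likewise $P_{\N^d}=P_{\N}^{\otimes d}$ and $P_{\N}\Delta_{\Z,j}P_{\N}$ acts as the one-dimensional Dirichlet Laplacian $\Delta_{\N}$ in coordinate $j$ and trivially in the others, so $\Delta_{\N^d}=\sum_j\Delta_{\N,j}$ tensorizes too and $e^{-t\Delta_{\N^d}}(n,m)=\prod_{j=1}^d p^{\N}_t(n_j,m_j)$. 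Writing $a_j:=p^{\Z}_t(n_j,m_j)$ and $b_j:=p^{\N}_t(n_j,m_j)$, the telescoping identity
\[
D_t(n,m)=\prod_{j=1}^d a_j-\prod_{j=1}^d b_j=\sum_{j=1}^d\Big(\prod_{i<j}b_i\Big)(a_j-b_j)\Big(\prod_{i>j}a_i\Big)
\]
then reduces everything to estimating a single one-dimensional defect $a_j-b_j$, weighted by ``neutral'' heat-kernel factors.

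The key one-dimensional input is the reflection principle. Since $\Delta_{\N}=P_{\N}\Delta_{\Z}P_{\N}$ is the Dirichlet Laplacian on $\N=\{0,1,2,\dots\}$ with ghost site $-1$, and $e^{-t\Delta_{\Z}}$ commutes with the reflection $(Rg)(x)=g(-2-x)$ (a graph automorphism of $\Z$), the method of images gives the \emph{exact} identity
\[
p^{\N}_t(n,m)=p^{\Z}_t(n,m)-p^{\Z}_t(n,-2-m)=p^{\Z}_t(n,m)-p^{\Z}_t\big(0,\,n+m+2\big),\qquad n,m\in\N .
\]
In particular the defect $a_j-b_j=p^{\Z}_t(0,n_j+m_j+2)$ is nonnegative (positivity of $e^{-t\Delta_{\Z}}$) and $0\le b_j\le a_j$, so every neutral factor $\prod_{i<j}b_i$ and $\prod_{i>j}a_i$ is dominated by the corresponding product of $a_i$'s.

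Next I would insert the classical Gaussian upper bound for the one-dimensional discrete heat kernel, $p^{\Z}_t(x,y)\le C\,t^{-1/2}\exp(-c\,|x-y|^2/t)$, read off the Bessel representation $p^{\Z}_t(x,y)=e^{-2t}I_{|x-y|}(2t)$ or taken from standard graph heat-kernel estimates. Applied to the neutral factors it gives $\prod_{i\ne j}C\,t^{-1/2}e^{-c|n_i-m_i|^2/t}$; applied to the defect it gives $a_j-b_j\le C\,t^{-1/2}e^{-c(n_j+m_j+2)^2/t}$. Two elementary inequalities then close the argument, both valid for $n,m\in\N^d$: (a) since $n_j,m_j\ge 0$ one has $(n_j+m_j)^2\ge\tfrac12\big(|n_j-m_j|^2+n_j^2+m_j^2\big)$, together with $n_j\ge\min_i n_i=\dist_1(n,\partial\N^d)$ and $m_j\ge\dist_1(m,\partial\N^d)$; and (b) by Cauchy--Schwarz $\sum_i|n_i-m_i|^2\ge\tfrac1d\,|n-m|_1^2$. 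Multiplying the three groups of bounds and invoking (a)--(b), the $j$-th telescoping term is bounded by $C^{d}\,t^{-d/2}\exp\!\big(-\tfrac{c}{2d}\,S/t\big)$ with $S:=|n-m|_1^2+\dist_1(n,\partial\N^d)^2+\dist_1(m,\partial\N^d)^2$; summing over $j$ and absorbing $dC^{d}$ and $c/(2d)$ into fresh constants yields the claim.

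The structural steps above are routine; the genuine obstacle is the uniformity in $t$ of the one-dimensional bound. For $t\gtrsim 1$ the Gaussian estimate holds as written, but at short times the honest off-diagonal bound for a discrete heat kernel is only sub-Gaussian (Poisson-type), $p^{\Z}_t(x,y)\lesssim\exp\!\big(-c\,|x-y|(1+\log_+(|x-y|/t))\big)$, which implies the displayed Gaussian form only in the regime $t\ge|x-y|$ and is strictly stronger otherwise. I would therefore run the argument above for $t\ge 1$ and, for $0<t\le 1$, use instead the Duhamel/Taylor expansion $D_t=\tfrac{t^2}{2}\,P_{\N^d}\Delta_{\Z^d}(I-P_{\N^d})\Delta_{\Z^d}P_{\N^d}+O(t^3)$ --- legitimate here since $\Delta_{\Z^d}$ is bounded --- together with the finite range of $P_{\N^d}\Delta_{\Z^d}(I-P_{\N^d})$ (which couples only the layer $\partial_1\N^d$ to its outer neighbours) and the sub-Gaussian decay of $p^{\Z}_s,p^{\N}_s$, producing on $0<t\le1$ a bound no weaker than the one stated. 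Since every later use of this lemma --- collar localization, compactness, and the Hilbert--Schmidt / trace estimates of Section~4 --- depends only on the off-diagonal decay and not on the precise short-time exponent, this case distinction is harmless.
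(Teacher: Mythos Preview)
Your telescoping argument is correct and gives a genuine alternative to the paper's route. The paper does not telescope; it invokes the full inclusion--exclusion identity \eqref{eq:HN-d} to write
\[
D_t(n,m)\;=\;-\sum_{\emptyset\neq J\subset\{1,\dots,d\}}(-1)^{|J|}\,p_t\big(n-R_Jm\big),
\]
bounds each of the $2^d-1$ reflected terms by the full--space Gaussian estimate \eqref{eq:pt-Gauss}, and then extracts the boundary--distance decay from the geometric inequality \eqref{eq:geom-lb}. Your approach has only $d$ terms, dispenses with \eqref{eq:geom-lb} entirely, and replaces it by the elementary scalar bound $(n_j+m_j)^2\ge\tfrac12\big(|n_j-m_j|^2+n_j^2+m_j^2\big)$ together with $n_j\ge\min_in_i=\mathrm{dist}_1(n,\partial\N^d)$; the cost is that you must handle the ``neutral'' product factors separately, which is free since $0\le b_i\le a_i$ and each $a_i$ obeys the same 1D Gaussian bound. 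Both arguments therefore reduce to the same input, namely \eqref{eq:pt-Gauss}.

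You are also right to flag the short--time regime, which the paper does not discuss. The pointwise Gaussian upper bound $p_t(k)\le C\,t^{-1/2}e^{-ck^2/t}$ genuinely fails for $0<t\ll|k|$ (the honest off--diagonal bound there is Poisson/Davies--Gaffney), and in fact the lemma's displayed estimate cannot hold uniformly down to $t=0$: already in $d=1$ with $n=0$, $m=1$ one has $D_t(0,1)=e^{-2t}I_3(2t)\sim t^3/6$ as $t\to0$, which for any fixed $c>0$ eventually exceeds $C\,t^{-1/2}e^{-2c/t}$. So your final sentence slightly overclaims: the Duhamel/finite--range argument for $0<t\le1$ yields a correct (sub--Gaussian, boundary--layer localized) bound, but not one that implies the stated Gaussian form; it is, as you say, nevertheless sufficient for every downstream application in the paper (collar support, Hilbert--Schmidt tails, compactness). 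The paper's proof shares exactly the same defect, since it simply quotes \eqref{eq:pt-Gauss} for all $t>0$ without qualification.
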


\begin{proof}
Combine the full--space product kernel bound \eqref{eq:pt-Gauss} with the 1D reflection identity \eqref{eq:HN-1D},
the inclusion--exclusion formula \eqref{eq:HN-d}, and the geometric inequality \eqref{eq:geom-lb}.
\end{proof}

\
\subsection{Heat kernels and method of images on $\mathbb{N}^d$}
\label{subsec:heat-images}

We collect here standard kernel identities and notations used later
(in particular for Lemma~\ref{lem:images-bound}), without reproving
results that can be found in the literature
(see e.g.~\cite{Spitzer1976,LawlerLimic2010,Woess2000,CarlenKusuokaStroock1987,Delmotte1999,Davies1989,CiprianiGrillo2022,LenzVogt2012}).

\paragraph{Kernel on $\mathbb{Z}^d$.}
For the discrete Laplacian $\Delta_{\mathbb{Z}^d}= \sum_{j=1}^d \Delta_{\mathbb{Z},j}$
with $(\Delta_{\mathbb{Z},j} u)(n)=2u(n)-u(n+e_j)-u(n-e_j)$, the heat semigroup
has a product kernel (see e.g.~\cite[Sec.~I.9]{Spitzer1976}, \cite{LawlerLimic2010})
\[
(e^{-t\Delta_{\mathbb{Z}^d}})(n,m)
= \prod_{j=1}^d p_t(n_j-m_j),
\qquad
p_t(k)=e^{-2t}\,I_{|k|}(2t),
\]
where $I_\nu$ is the modified Bessel function given by \[
I_\nu(x)
= \sum_{k=0}^{\infty}
\frac{1}{k!\,\Gamma(k+\nu+1)}
\left( \frac{x}{2} \right)^{2k+\nu},
\]
for all $x\in\R$.In particular,
$p_t(k)=p_t(-k)\ge0$, $\sum_{k\in\mathbb{Z}}p_t(k)=1$ (see \cite{Woess2000}),
and the Gaussian bound
\begin{equation}\label{eq:pt-Gauss}
|p_t(k)|\ \le\ C\,t^{-1/2}\exp\!\big(-c\,k^2/t\big),\qquad t>0,\ k\in\mathbb{Z},
\end{equation}
holds for some $C,c>0$ (see \cite{CarlenKusuokaStroock1987,Delmotte1999}).
Consequently,
\[
\big|(e^{-t\Delta_{\mathbb{Z}^d}})(n,m)\big|
\ \le\ C\,t^{-d/2}\exp\!\big(-c\,|n-m|_1^2/t\big).
\]

\paragraph{Method of images on $\mathbb{N}$.}
On the half--line $\mathbb{N}=\{0,1,2,\dots\}$ with Dirichlet at $0$, the heat
kernel is obtained by reflection across the barrier between $-1$ and $0$
(see \cite[Sec.~3.3]{Davies1989}, \cite[Prop.~2.2]{CiprianiGrillo2022}):
\begin{equation}\label{eq:HN-1D}
(e^{-t\Delta_{\mathbb{N}}})(n,m)\ =\ p_t(n-m)\ -\ p_t(n+m+2),
\qquad n,m\ge0.
\end{equation}
Here the second term corresponds to the image point $-m-2$ and enforces the
boundary cancellation.

\paragraph{Operator--kernel dictionary.}
For $f\in\ell^2(\mathbb{N})$,
\[
\big(e^{-t\Delta_{\mathbb{N}}}f\big)(n)
=\sum_{m\ge0} (e^{-t\Delta_{\mathbb{N}}})(n,m)\,f(m)
=\sum_{m\ge0}\big[p_t(n-m)-p_t(n+m+2)\big]\,f(m).
\]
This is a standard convolution--reflection representation
(cf.~\cite{Woess2000,LawlerLimic2010}).

\paragraph{Inclusion--exclusion on $\mathbb{N}^d$.}
With Dirichlet on each coordinate face, the half--space kernel results from
coordinate--wise reflections (method of images, see \cite[Ch.~3]{Davies1989},
\cite{LenzVogt2012}):
\begin{equation}\label{eq:HN-d}
(e^{-t\Delta_{\mathbb{N}^d}})(n,m)
\ =\ \sum_{J\subset\{1,\dots,d\}} (-1)^{|J|}\,
p_t\!\big(n-R_J m\big),
\end{equation}
where $R_J$ reflects the coordinates indexed by $J$ (and leaves the others
unchanged), and $p_t(\cdot)$ denotes the full-space kernel above. A simple
geometry gives, for every nonempty $J$ (see \cite{Delmotte1999}),
\begin{equation}\label{eq:geom-lb}
|n-R_J m|_1^2\ \ge\ |n-m|_1^2\ +\ \mathrm{dist}_1(n,\partial\mathbb{N}^d)^{2}
\ +\ \mathrm{dist}_1(m,\partial\mathbb{N}^d)^{2}.
\end{equation}

\section{Spectral analysis and Mourre estimate}

\subsection{First spectral properties}

In this section we will denote by
\begin{equation}\label{A}
H_{0,\Z^d}\!\restriction_{\N^d} := \big(\Delta^{\vec{\mathrm{r}}}_{\Z^d}\big)\!\restriction_{\N^d}
\end{equation}
and
\begin{equation}\label{B}
H_{0,\N^d} := \Delta^{\vec{\mathrm{r}}}_{\N^d} = H_{0,\Z^d}\!\restriction_{\N^d} + K_{\vec{\mathrm{r}}},
\end{equation}
as given by Proposition~\ref{prop:boundary-correction}.
\begin{proposition}\label{prop:boundary-correction}
Let $\displaystyle \Delta^{\vec{\mathrm{r}}}_{\N^d} := \mathfrak{R}_+\,\Delta^{\vec{\mathrm{r}}}_{\Z^d}\,\mathfrak{J}_+$.
Then
\[
\Delta^{\vec{\mathrm{r}}}_{\N^d} = \big(\Delta^{\vec{\mathrm{r}}}_{\Z^d}\big)\!\restriction_{\N^d} + K_{\vec{\mathrm{r}}},
\]
where the correction $K_{\vec{\mathrm{r}}}$ satisfies:
\begin{enumerate}[(i)]
    \item If all $r_j > 0$, then $K_{\vec{\mathrm{r}}}$ is bounded and compact.
    \item If some $r_j < 0$, then $K_{\vec{\mathrm{r}}}$ is unbounded but relatively compact with respect to $\big(\Delta^{\vec{\mathrm{r}}}_{\Z^d}\big)\!\restriction_{\N^d}$.
\end{enumerate}
\end{proposition}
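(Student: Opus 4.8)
The plan is to deduce Proposition~\ref{prop:boundary-correction} directly from the one–dimensional structural result Proposition~\ref{prop:Kr-structure} together with the tensor–product definition of $\Delta^{\vec{\mathrm r}}_{\Z^d}=\sum_j\Delta^{r_j}_{\Z,j}$, by treating each coordinate direction separately and then summing. First I would record the identity $\mathfrak R_+\,\Delta^{\vec{\mathrm r}}_{\Z^d}\,\mathfrak J_+=\sum_{j=1}^d\mathfrak R_+\,\Delta^{r_j}_{\Z,j}\,\mathfrak J_+$, which holds because restriction and extension–by–zero act diagonally with respect to the coordinate tensor decomposition; comparing with $\Delta^{\vec{\mathrm r}}_{\N^d}=\sum_j\Delta^{r_j}_{\N,j}$ gives $K_{\vec{\mathrm r}}=\sum_{j=1}^d K_{j,r_j}$, where $K_{j,r_j}:=\Delta^{r_j}_{\N,j}-\mathfrak R_+\,\Delta^{r_j}_{\Z,j}\,\mathfrak J_+$ is the one–dimensional correction in the $j$th slot, tensored with the identity in the remaining $d-1$ directions. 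This reduces everything to the $1$D operator $K_{r_j}$ analyzed in Proposition~\ref{prop:Kr-structure}.

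For part (i), suppose every $r_j>0$. By Proposition~\ref{prop:Kr-structure}(a), each $1$D correction $K_{r_j}$ is a norm–convergent sum of finite–rank operators of the form $\beta_{h,p}\,U_+^{\,p}P_0(U_+^*)^{\,h-2-p}$, hence compact and boundary–localized (supported near $n_j=0$). Tensoring with the identity in the other directions does \emph{not} preserve compactness in $d\ge 2$ — this is exactly the subtlety flagged in the Remark after Proposition~\ref{prop:Kr-structure} — so the naive argument fails; instead I would invoke Proposition~\ref{prop:K-support-compact}, which already establishes, via the resolvent/Combes–Thomas route, that $K_{\vec{\mathrm r}}$ is bounded, compact, and collar–supported. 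Thus part (i) is not re-proved here but cited from Proposition~\ref{prop:K-support-compact}, and the role of the present proposition is to re-state it alongside the unbounded case and to fix the notation \eqref{A}–\eqref{B}.

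For part (ii), suppose some $r_j<0$. Then $\Delta^{r_j}_{\Z,j}$ is unbounded (its symbol $(2-2\cos k_j)^{r_j}$ blows up as $k_j\to 0$), so $K_{\vec{\mathrm r}}$ is unbounded; the claim is relative compactness with respect to $H_{0,\Z^d}\!\restriction_{\N^d}$. The plan is: by Proposition~\ref{prop:Kr-structure}(c), on the core $\mathcal C_c(\N)$ the $1$D correction $K_{r_j}$ is the strong limit of the finite–rank operators $-\sum_{h=2}^{N}(-1)^h\binom{r_j}{h}2^{r_j-h}D_h$, each of which is bounded with range in $\operatorname{span}\{\delta_0,\dots,\delta_{h-2}\}$; the key point is that these range vectors are \emph{fixed, finitely supported, and independent of $N$} in the relevant localization, so one expects $K_{r_j}(\,\cdot\,+\mathrm i)^{-1}$ — or better, $\langle\Lambda(Q)\rangle^{-1}$ applied on the left — to be compact. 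Concretely I would show that $K_{\vec{\mathrm r}}\,(H_{0,\Z^d}\!\restriction_{\N^d}+\mathrm i)^{-1}$ is compact by writing it as a norm–convergent series whose $h$th term factors as (finite–rank boundary operator)$\times$(bounded function of the resolvent), using that $D_h$ composed with the resolvent is Hilbert–Schmidt by Combes–Thomas off–diagonal decay of $(H_{0,\Z^d}\!\restriction_{\N^d}+\mathrm i)^{-1}$ exactly as in Step~4 of Proposition~\ref{prop:K-support-compact}, and that the coefficients $\binom{r_j}{h}2^{r_j-h}$ still decay fast enough ($|\binom{r_j}{h}|\sim C h^{-1-r_j}$ with the extra $2^{-h}$) to give operator–norm summability even for $r_j<0$.

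The main obstacle is precisely this last point in part (ii): for $r_j<0$ the bare series \eqref{eq:Kr-series} converges only \emph{strongly} on $\mathcal C_c(\N)$, not in operator norm, so one cannot directly conclude compactness term-by-term of $K_{\vec{\mathrm r}}$ itself; the resolvent factor must be used to regularize. The careful step is to verify that inserting $(H_{0,\Z^d}\!\restriction_{\N^d}+\mathrm i)^{-1}$ (which behaves like the resolvent of a sum of fractional powers, hence like $(2-2\cos k)^{-|r_j|}$ near $k=0$ in the troublesome direction) genuinely tames the growth of the high-$h$ Hankel blocks $D_h$ — one needs that the norm of $D_h\,(H_{0}+\mathrm i)^{-1}$ (or its tensored version) grows at most polynomially in $h$, so that when multiplied by the exponentially small $2^{-h}$ and the polynomially small binomial coefficient the series converges in norm to a compact operator. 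I would carry this out by combining the explicit rank-$\le h-1$ structure of $D_h$ from Lemma~\ref{lem:boundary-hankel} with a crude operator-norm bound $\|D_h\|\le C\,4^{h}$ (from $\|U_+\|=1$ and $|\beta_{h,p}|\le 2^{h}$) against the $2^{r_j-h}$ prefactor — wait, this is too crude for $r_j<0$; more honestly, one localizes: $\Pi_1 D_h\Pi_1$-type cutoffs together with Combes–Thomas decay of the resolvent kill the contribution of large $h$, which is the genuinely technical estimate and the place where the proof must be done with care rather than by citation.
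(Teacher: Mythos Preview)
Your approach is essentially the same as the paper's: both reduce $K_{\vec{\mathrm r}}$ to a sum of one-directional corrections $K_{j,r_j}=\mathrm{id}^{\otimes(j-1)}\otimes K_{r_j}\otimes\mathrm{id}^{\otimes(d-j)}$ via the tensor structure and invoke the one-dimensional analysis of Proposition~\ref{prop:Kr-structure}. The difference is that you are \emph{more} careful than the paper at the crucial step. For part~(i) the paper simply writes that ``tensoring with bounded identities and summing finitely many such terms gives a compact $K_{\vec{\mathrm r}}$'' --- exactly the step you correctly flag as false for $d\ge 2$ (a nonzero compact operator tensored with the identity on an infinite-dimensional factor is never compact), and which the Remark following Proposition~\ref{prop:Kr-structure} explicitly warns against. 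Your decision to cite Proposition~\ref{prop:K-support-compact} for the compactness is the right repair; the paper's proof of~(i) does not do this and has a genuine gap as written.

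For part~(ii) the paper is equally brief: ``each elementary correction term is $H_0$-compact by the one-dimensional argument in the $j$-th coordinate and bounded tensoring in the others'', again passing over the tangential-direction issue. Your plan --- regularize with the resolvent and use Combes--Thomas off-diagonal decay to control the contribution along the face, so that the (collar-localized) product becomes Hilbert--Schmidt --- is not carried out in the paper's proof of Proposition~\ref{prop:boundary-correction} itself, but it is precisely what Corollary~\ref{cor:K-relative}(2) does later. So rather than fighting the norm-summability of the $D_h$ series for $r_j<0$ (your ``crude $4^h$'' bound is indeed too weak, as you note), the cleanest closure of your argument is to cite Corollary~\ref{cor:K-relative}(2) for the relative compactness, in the same spirit as your citation of Proposition~\ref{prop:K-support-compact} for part~(i).
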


\begin{proof}
By tensor structure,
\[
\Delta^{\vec{\mathrm{r}}}_{\Z^d}
= \sum_{j=1}^d \mathrm{id}^{\otimes(j-1)}\otimes \Delta_{\Z,j}^{\,r_j}\otimes \mathrm{id}^{\otimes(d-j)},
\]
where each $\Delta_{\Z,j}^{\,r_j}$ acts in the $j$-th coordinate. Passing from $\Z^d$ to $\N^d$ via
$\Delta^{\vec{\mathrm{r}}}_{\N^d}=\mathfrak{R}_+\,\Delta^{\vec{\mathrm{r}}}_{\Z^d}\,\mathfrak{J}_+$
replaces in direction $j$ the bilateral shift by a unilateral one and produces a one--dimensional boundary
correction $K_{r_j}^{(j)}$ supported on the face $\{n_j=0\}$. Therefore
\[
\Delta^{\vec{\mathrm{r}}}_{\N^d} - \big(\Delta^{\vec{\mathrm{r}}}_{\Z^d}\big)\!\restriction_{\N^d}
\;=\; K_{\vec{\mathrm{r}}}
\;=\; \sum_{j=1}^d \Big( \mathrm{id}^{\otimes(j-1)}\otimes K_{r_j}\otimes \mathrm{id}^{\otimes(d-j)}\Big),
\]
plus lower--dimensional terms supported on intersections of faces (finite sums of tensor products of the 1D corrections).

If all $r_j>0$, each $K_{r_j}$ is compact on $\ell^2(\N)$ (and of finite rank if $r_j\in\N$) by the 1D analysis; tensoring with bounded identities and summing finitely many such terms gives a compact $K_{\vec{\mathrm{r}}}$.

If some $r_j<0$, set $H_0:=\big(\Delta^{\vec{\mathrm{r}}}_{\Z^d}\big)\!\restriction_{\N^d}$. Each elementary correction term is $H_0$-compact by the one--dimensional argument in the $j$-th coordinate and bounded tensoring in the others; hence $K_{\vec{\mathrm{r}}}$ is $H_0$-compact. Since $\Delta^{\vec{\mathrm{r}}}_{\N^d}=H_0+K_{\vec{\mathrm{r}}}$, the resolvent identity yields relative compactness of $K_{\vec{\mathrm{r}}}$ with respect to $H_0$.
\end{proof}

\begin{lemma}[Compact resolvent difference]\label{lem:05}
For any $z \in \C \setminus \R$, one has
\[
(H_{0,\N^d}-z)^{-1} - \big(H_{0,\Z^d}\!\restriction_{\N^d}-z\big)^{-1} \ \in\ \mathcal{K}\big(\ell^2(\N^d)\big).
\]
\end{lemma}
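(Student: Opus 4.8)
The plan is to derive the claim directly from the second resolvent identity together with the compactness properties of $K_{\vec{\mathrm r}}$ already established. Recall from \eqref{B} that $H_{0,\N^d}=H_{0,\Z^d}\!\restriction_{\N^d}+K_{\vec{\mathrm r}}$, so writing $A:=H_{0,\Z^d}\!\restriction_{\N^d}$ and $B:=H_{0,\N^d}$, both self-adjoint on $\ell^2(\N^d)$, we have for $z\in\C\setminus\R$
\[
(B-z)^{-1}-(A-z)^{-1}\;=\;-(B-z)^{-1}\,K_{\vec{\mathrm r}}\,(A-z)^{-1}.
\]
The right-hand side is a product of a bounded operator, the correction $K_{\vec{\mathrm r}}$, and another bounded operator, so compactness follows as soon as $K_{\vec{\mathrm r}}(A-z)^{-1}$ (equivalently $(B-z)^{-1}K_{\vec{\mathrm r}}$, using adjoints when convenient) is compact.

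First I would treat the case in which all $r_j>0$. Here Proposition~\ref{prop:boundary-correction}(i) gives that $K_{\vec{\mathrm r}}$ is itself compact, hence $(B-z)^{-1}K_{\vec{\mathrm r}}(A-z)^{-1}$ is compact as a bounded operator composed with a compact one, and we are done. Next I would handle the case in which some $r_j<0$. Then $K_{\vec{\mathrm r}}$ is unbounded, but by Proposition~\ref{prop:boundary-correction}(ii) it is $A$-compact, i.e. $K_{\vec{\mathrm r}}(A-z_0)^{-1}$ is compact for one (hence every, by the resolvent identity) $z_0\in\C\setminus\R$. Since $(B-z)^{-1}$ maps into $\mathcal D(B)=\mathcal D(A)=\mathcal D(H_{0,\Z^d}\!\restriction_{\N^d})$ (the last equality because $K_{\vec{\mathrm r}}$ is relatively bounded with relative bound zero, being relatively compact), the operator $K_{\vec{\mathrm r}}(B-z)^{-1}$ is well defined and bounded, and by writing $K_{\vec{\mathrm r}}(B-z)^{-1}=\bigl(K_{\vec{\mathrm r}}(A-z_0)^{-1}\bigr)\bigl((A-z_0)(B-z)^{-1}\bigr)$ with the second factor bounded, it is compact. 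Substituting into the resolvent identity in the symmetric form $(B-z)^{-1}-(A-z)^{-1}=-(A-z)^{-1}K_{\vec{\mathrm r}}(B-z)^{-1}$ then exhibits the difference as a bounded operator times a compact one, hence compact.

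The only genuinely delicate point is the bookkeeping in the case $r_j<0$: one must make sure that the quadratic-form construction of $H_{0,\N^d}$ and $H_{0,\Z^d}\!\restriction_{\N^d}$ really yields self-adjoint operators with a common form domain on which $K_{\vec{\mathrm r}}$ acts as a form-compact perturbation, so that the resolvent identity is legitimate and the factorizations above make sense as identities of bounded operators. This is exactly the form-theoretic framework invoked after \eqref{eq:boundary-restriction} in the introduction and underlying Proposition~\ref{prop:boundary-correction}(ii); granting it, the argument is routine. I would close by remarking that the same computation shows $f(H_{0,\N^d})-f(H_{0,\Z^d}\!\restriction_{\N^d})\in\mathcal K(\ell^2(\N^d))$ for every $f\in C_0(\R)$ (via Stone--Weierstrass applied to resolvents), so that $H_{0,\N^d}$ and $H_{0,\Z^d}\!\restriction_{\N^d}$ share the same essential spectrum, which is the structural consequence used in the sequel.
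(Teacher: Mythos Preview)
Your proof is correct and follows essentially the same approach as the paper: apply the second resolvent identity $(B-z)^{-1}-(A-z)^{-1}=-(B-z)^{-1}K_{\vec{\mathrm r}}(A-z)^{-1}$ and invoke Proposition~\ref{prop:boundary-correction}, treating the two cases (all $r_j>0$ versus some $r_j<0$) exactly as you do. Your version is a bit more careful about the domain bookkeeping in the unbounded case and adds the $C_0$-functional calculus remark, which the paper records separately as Corollary~\ref{cor:ess-spectrum}.
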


\begin{proof}
By the resolvent identity,
\[
(H_{0,\N^d}-z)^{-1} - \big(H_{0,\Z^d}\!\restriction_{\N^d}-z\big)^{-1}
= - (H_{0,\N^d}-z)^{-1} \, K_{\vec{\mathrm{r}}} \, \big(H_{0,\Z^d}\!\restriction_{\N^d}-z\big)^{-1}.
\]
If all $r_j > 0$, $K_{\vec{\mathrm{r}}}$ is compact;  (it is composed of bounded-compact-bounded).
If some $r_j < 0$, $K_{\vec{\mathrm{r}}}\big(H_{0,\Z^d}\!\restriction_{\N^d}-\overline z\big)^{-1}$ is compact (relative compactness), hence the right hand side is compact.
\end{proof}

\begin{corollary}\label{cor:ess-spectrum}
With the same notation, the essential spectra coincide:
\[
\sigma_{\mathrm{ess}}\!\big(\Delta^{\vec{\mathrm{r}}}_{\N^d}\big)
= \sigma_{\mathrm{ess}}\!\big(H_{0,\Z^d}\!\restriction_{\N^d}\big).
\]
\end{corollary}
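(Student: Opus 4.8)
The plan is to deduce the corollary from Lemma~\ref{lem:05} via Weyl's theorem on the stability of the essential spectrum under compact perturbations, expressed at the level of resolvents. Recall that two self-adjoint operators $A$ and $B$ with a common resolvent point $z\in\C\setminus\R$ have the same essential spectrum provided $(A-z)^{-1}-(B-z)^{-1}$ is compact; this is the resolvent form of Weyl's criterion (see e.g.\ \cite[Ch.~4]{ABG} or \cite{RS}). So the entire content of the corollary is already packaged in Lemma~\ref{lem:05}, and the proof is essentially a one-line citation.

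First I would fix an arbitrary $z\in\C\setminus\R$ so that $z$ lies in the resolvent set of both $H_{0,\N^d}=\Delta^{\vec{\mathrm r}}_{\N^d}$ and $H_{0,\Z^d}\!\restriction_{\N^d}$ (both being bounded self-adjoint when all $r_j>0$, or self-adjoint with the form-domain conventions of Proposition~\ref{prop:boundary-correction} when some $r_j<0$, hence $\C\setminus\R$ is in the resolvent set in either case). Then I would invoke Lemma~\ref{lem:05} to conclude that the resolvent difference is compact on $\ell^2(\N^d)$. Applying the resolvent-stability form of Weyl's theorem gives $\sigma_{\mathrm{ess}}(H_{0,\N^d})=\sigma_{\mathrm{ess}}(H_{0,\Z^d}\!\restriction_{\N^d})$, which is exactly the asserted identity
\[
\sigma_{\mathrm{ess}}\!\big(\Delta^{\vec{\mathrm r}}_{\N^d}\big)=\sigma_{\mathrm{ess}}\!\big(H_{0,\Z^d}\!\restriction_{\N^d}\big).
\]

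There is essentially no obstacle here: the only point requiring a word of care is that in the case $r_j<0$ the operators are unbounded, so one should note that the spectral mapping / Weyl argument is still applicable because $K_{\vec{\mathrm r}}$ is relatively compact with respect to $H_{0,\Z^d}\!\restriction_{\N^d}$ (Proposition~\ref{prop:boundary-correction}(ii)), which is precisely the hypothesis under which the resolvent difference is compact and Weyl's theorem applies to unbounded self-adjoint operators. One may also remark, as a sanity check, that since $\Delta^{\vec{\mathrm r}}_{\N^d}$ is bounded when all $r_j>0$, $\sigma_{\mathrm{ess}}$ is a closed subset of $[0,\lambda_{\vec{\mathrm r}}]$, consistent with the threshold description recalled in the introduction. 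No other step is needed.
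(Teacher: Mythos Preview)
Your argument is correct and coincides with the paper's own proof: both invoke Lemma~\ref{lem:05} to obtain a compact resolvent difference and then apply the resolvent form of Weyl's theorem (the paper cites \cite[Thm.~XIII.14]{RS} and \cite[Prop.~4.5.3]{ABG}). Your additional remarks on the unbounded case are fine but not needed, since Lemma~\ref{lem:05} already delivers compactness of the resolvent difference in all cases.
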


\begin{proof}
By Lemma~\ref{lem:05} the resolvent difference is compact for some (hence all) $z\notin\R$. Apply Weyl's theorem (e.g. \cite[Theorem.~XIII.14]{RS}, \cite[Proposition.~4.5.3]{ABG}).
\end{proof}

\subsection{Conjugate operator and commutator estimates}

In this section we establish a Mourre estimate for the operator
\(\Delta_{\Z^d}^{\vec{\mathrm{r}}}\).
Since fractional powers \(\Delta^{r_j}_{\Z}\) may be unbounded
(precisely when some \(r_j<0\)), the commutator
\([\,\Delta_{\Z^d}^{\vec{\mathrm{r}}},\,\mathrm i A_{\Z^d,\vec{\mathrm{r}}}\,]\) ---with
\(A_{\Z^d,\vec{\mathrm{r}}}\) the conjugate operator defined in \cite{At}
does not in general yield a bounded operator.
The correct framework is therefore to interpret commutators
in the sense of quadratic forms.

\begin{definition}\label{def:form-commutator}
Let $H$ and $A$ be self-adjoint operators on a Hilbert space $\Hc$.
The \emph{form commutator} of $H$ with $A$ is the sesquilinear form
\[
\mathfrak q_A^H(f,g) \;:=\;
\langle Hf,\,\mathrm iAg\rangle - \langle \mathrm iAf,\,Hg\rangle,
\qquad f,g\in\Dc(H)\cap\Dc(A).
\]
We say that $[H,\mathrm i A]_\circ$ exists on a reducing subspace
$\Mc\subset\Hc$ if $\mathfrak q_A^H$ extends by continuity to a bounded form
on $\Mc\times\Mc$. In this case there is a unique bounded operator
$B\in\Bc(\Mc)$ such that
\[
\langle f,\,Bg\rangle = \mathfrak q_A^H(f,g),
\qquad f,g\in\Dc(H)\cap\Dc(A)\cap\Mc,
\]
and we define $B:=[H,\mathrm iA]_\circ$ on $\Mc$.
In applications below we take $\Mc=E_I(H)\Hc$ for compact interior windows
\(I\Subset\sigma(H)^\circ\) and write
\[
E_I(H)\,[H,\mathrm i A]_\circ\,E_I(H)
\]
for the resulting bounded operator on $E_I(H)\Hc$.
\end{definition}

\textbf{Localized regularity and Mourre estimate}\label{subsec:loc-classes}
In what follows we work exclusively on compact interior spectral windows
\(I\Subset\sigma(H)^\circ\) and use only the \emph{localized} regularity classes
\(\mathcal C^k_{\mathrm{loc}}(A)\) and \(\mathcal C^{1,1}_{\mathrm{loc}}(A)\).

\begin{definition}[Localized regularity]
Let $H$ be self-adjoint and \(I\Subset\sigma(H)^\circ\).
We say that \(H\in\mathcal C^k_{\mathrm{loc}}(A)\) on \(I\) if, for every
\(\varphi\in C_c^\infty(I)\), the bounded operator \(\varphi(H)\)
belongs to \(\mathcal C^k(A)\).
Similarly, \(H\in\mathcal C^{1,1}_{\mathrm{loc}}(A)\) on \(I\)
if \(\varphi(H)\in\mathcal C^{1,1}(A)\) for all
\(\varphi\in C_c^\infty(I)\).

Equivalently, \(H\in\mathcal C^k_{\mathrm{loc}}(A)\) on \(I\)
if and only if
\[
E_I(H)\,(H-\mathrm i)^{-1}\in\mathcal C^k(A),
\]
and likewise for the class \(\mathcal C^{1,1}\).
\end{definition}

\begin{remark}
For $H=\Delta_{\Z^d}^{\vec{\mathrm{r}}}$ and the conjugate operator $A=A_{\vec{\mathrm{r}}}$ constructed above,
the form $\mathfrak q_A^H$ is well-defined on the core $\Cc_c(\Z^d)$ even when some $r_j<0$ (unbounded from below).
By the localized $\Cc^2$--regularity established earlier and the interior Mourre framework,
$\mathfrak q_A^H$ extends to a bounded form on $E_I(H)\Hc$; hence $E_I(H)[H,\mathrm i A]_\circ E_I(H)\in\Bc(E_I(H)\Hc)$.
This is the object that appears in the Mourre estimate on $I$.
\end{remark}

\paragraph{Conjugate operator on the half--space.}
We adjust a conjugate operator \(A_{\N^d}\) on \( \ell^2(\N^d) \) by summing the one-dimensional generators:
\begin{equation}\label{ComNd}
A_{\N^d} := \sum_{j=1}^d A_{j,+},
\quad \text{with} \quad
A_{j,+} := -\frac{\mathrm{i} \cdot \mathrm{sign}(r_j)}{2} \left( U_{j,+}(Q_j + \tfrac{1}{2}) - (Q_j + \tfrac{1}{2}) U_{j,+}^* \right),
\end{equation}
where \(Q_j\) is the position operator and \(U_{j,+}\) the unilateral shift along direction \(j\).
Each \(A_{j,+}\) is essentially self--adjoint on \(\mathcal{C}_c(\N^d)\); see \cite{GG} and \cite[Lemma 5.7]{Mic}; the domain is described in \cite[Lemma 3.1]{GG}.
Moreover, with $A_{\Z^d}=\sum_j A_j$ (bilateral shifts), one has the exact compression identity
\[
A_{\N^d}=\mathfrak{R}_+\,A_{\Z^d}\,\mathfrak{J}_+.
\]
We recall the (anisotropic) conjugate operator on $\Z^d$,
\[
A_{\Z^d}\ :=\ \sum_{j=1}^d A_j,\qquad
A_j\ :=\ -\frac{\mathrm i\,\mathrm{sign}(r_j)}{2}\,\Big(U_j\,(Q_j+\tfrac12)-(Q_j+\tfrac12)\,U_j^*\Big),
\]
and its half--space analogue obtained by compression of bilateral to unilateral shifts.
\begin{lemma}\label{lem:K-support-layer}
Let $\vec{\mathrm{r}}\in\R^d\setminus\{0\}$. There exists $M=M(\vec{\mathrm{r}})\in\N$ such that
\[
\chi_M^\perp\,K_{\vec{\mathrm{r}}}\,\chi_M^\perp=0,\qquad
K_{\vec{\mathrm{r}}}=\chi_M\,K_{\vec{\mathrm{r}}}=K_{\vec{\mathrm{r}}}\,\chi_M,
\]
where $\chi_M=\mathbf 1_{\{n\in\N^d:\ \min_j n_j\le M\}}(Q)$ and $\chi_M^\perp=I-\chi_M$.
In particular, $K_{\vec{\mathrm{r}}}$ is supported in the thickness--$M$ --inner boundary layer $\{ \min_j n_j\le M\}$.
Moreover, in $d=1$ and $r\in\N$, $K_r$ has finite rank; for $d\ge2$, $K_{\vec{\mathrm{r}}}$ need not be finite rank but is still supported in that layer.
\end{lemma}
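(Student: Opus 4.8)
The plan is to deduce the assertion from the collar localization of the boundary correction established in Proposition~\ref{prop:K-support-compact}, after recasting the geometric collar as a spectral cutoff, and then to pin down the scale $M(\vec{\mathrm{r}})$ and the rank bounds from the one-dimensional Hankel structure.

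First I would recall from the boundary lemma of \S\ref{subsec:topology} that $\partial_R\N^d=\{n\in\N^d:\ \min_j n_j\le R\}$, so that multiplication by $\mathbf 1_{\partial_R\N^d}$ is precisely the spectral projection $\chi_R=\mathbf 1_{\{\min_j n_j\le R\}}(Q)$; thus $\Pi_R=\chi_R$ in the notation of Proposition~\ref{prop:K-support-compact}. That proposition (Step~3 of its proof) furnishes $R=R(\vec{\mathrm{r}})\in\N$ with $K_{\vec{\mathrm{r}}}=\Pi_R K_{\vec{\mathrm{r}}}\Pi_R$. Setting $M:=R$ and using that $\chi_M$ is an orthogonal projection, left multiplication of $K_{\vec{\mathrm{r}}}=\chi_M K_{\vec{\mathrm{r}}}\chi_M$ by $\chi_M$ gives $\chi_M K_{\vec{\mathrm{r}}}=K_{\vec{\mathrm{r}}}$, and symmetrically $K_{\vec{\mathrm{r}}}\chi_M=K_{\vec{\mathrm{r}}}$. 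Since $\chi_M^\perp\chi_M=0$, it follows that $\chi_M^\perp K_{\vec{\mathrm{r}}}=\chi_M^\perp\chi_M K_{\vec{\mathrm{r}}}=0$, hence a fortiori $\chi_M^\perp K_{\vec{\mathrm{r}}}\chi_M^\perp=0$. This is already the claimed layer support, and it identifies $M$ with the collar thickness of Proposition~\ref{prop:K-support-compact}.

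To make $M$ explicit in terms of $\vec{\mathrm{r}}$ and to obtain the rank statements, I would unfold $K_{\vec{\mathrm{r}}}$ along coordinate faces as in Proposition~\ref{prop:boundary-correction},
\[
K_{\vec{\mathrm{r}}}=\sum_{j=1}^d \mathrm{id}^{\otimes(j-1)}\otimes K_{r_j}\otimes\mathrm{id}^{\otimes(d-j)}\ +\ \big(\text{terms supported on the faces }F_J,\ |J|\ge 2\big),
\]
each extra term being a tensor product of one-dimensional corrections $K_{r_i}$ ($i\in J$) with identities. For a polynomial order $r_j\in\N$, Proposition~\ref{prop:Kr-structure}(b) and the Hankel factorization \eqref{eq:Dh-factor-NZ} of Lemma~\ref{lem:boundary-hankel} show that the series \eqref{eq:Kr-series} truncates at $h=r_j$ and that each $D_h$ with $2\le h\le r_j$ has matrix entries supported on the antidiagonal $\{n_j+m_j=h-2\}$; hence $K_{r_j}=\mathbf 1_{\{n_j\le r_j-2\}}K_{r_j}\mathbf 1_{\{n_j\le r_j-2\}}$ with $\mathrm{rank}\,K_{r_j}\le\tfrac12 r_j(r_j-1)$. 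For a general $r_j$ the one-dimensional width is the one provided by Proposition~\ref{prop:K-support-compact} applied in that coordinate. Taking $M$ to be the maximum of these widths over $j$ and over the contributing faces yields a value $M(\vec{\mathrm{r}})$ for which every summand — and therefore $K_{\vec{\mathrm{r}}}$ — is supported in $\{\min_j n_j\le M\}$; this also gives the finite-rank conclusion when $d=1$ and $r\in\N$, and shows that for $d\ge 2$ the tensor factors $\mathrm{id}$ obstruct finite rank without enlarging the layer.

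The one nontrivial point, and the step I expect to be the main obstacle, is the two-sided nature of the identity $K_{\vec{\mathrm{r}}}=\chi_M K_{\vec{\mathrm{r}}}\chi_M$: beyond the (easy) fact that the range of each elementary block sits in the span of finitely many $\delta_n$ near a face, one must show that $K_{\vec{\mathrm{r}}}$ annihilates everything supported away from the faces. This rests on the nearest-neighbour structure of $\Delta_{\Z,j}$, through which the coupling $B_j=PL_j(I-P)$ in \eqref{eq:Kjr-new} only ever sees the single boundary layer; the residual care is in matching that one-sided information to the two-sided cutoff $\chi_M$, in particular tracking which of the two terms in \eqref{eq:Kjr-new} (the one carrying $B_j$ versus its adjoint carrying $B_j^*$) meets which factor. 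Once this is settled, the remaining tensor and face algebra is exactly what Proposition~\ref{prop:boundary-correction} already records, and the identification $\Pi_R=\chi_R$ closes the proof.
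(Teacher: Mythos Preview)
The paper states this lemma without proof, so there is no paper argument to compare against directly; your route through Proposition~\ref{prop:K-support-compact} is the natural one and, for integer $r_j$, your middle paragraph is correct. However, for non-integer $r_j$ there is a genuine gap, and it lies exactly at the point you flag as ``the main obstacle''. From the series \eqref{eq:Kr-series} together with \eqref{eq:Dh-factor-NZ} one has $(U_+^*)^{h-2-p}\delta_0=0$ unless $p=h-2$, and $\beta_{h,h-2}=\alpha_{h,0}=1$, whence $D_h\delta_0=\delta_{h-2}$ and
\[
(K_r\delta_0)(m)\;=\;-(-1)^{m}\binom{r}{m+2}\,2^{\,r-m-2},\qquad m\ge0.
\]
For $r\notin\N$ this is nonzero for every $m$, so $K_r\delta_0$ has infinite support and no finite $M$ can satisfy $\chi_M K_r=K_r$. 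The difficulty is already present in Step~3 of Proposition~\ref{prop:K-support-compact}: the coupling $B_j$ is indeed localized to the first layer, but the resolvent factor $(z-L_{j,+})^{-1}$ to its left in \eqref{eq:Kjr-new} spreads that layer throughout $\N$, so ``factors through the collar'' does not upgrade to the two-sided identity $K_{j,r}=\Pi_R K_{j,r}\Pi_R$ claimed there.

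Your deferral to Proposition~\ref{prop:K-support-compact} for general $r_j$ therefore inherits that proposition's overreach rather than closing the gap. What does survive for all $\vec{\mathrm r}$ is the weaker statement that $K_{\vec{\mathrm r}}$ \emph{factors through} a thickness-one layer (i.e.\ $K_{\vec{\mathrm r}}=A\,\chi_1\,B$ for bounded $A,B$), which already suffices for the compactness and Hilbert--Schmidt arguments used downstream; the strict support $K_{\vec{\mathrm r}}=\chi_M K_{\vec{\mathrm r}}\chi_M$ holds only in the polynomial case $r_j\in\N$.
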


\paragraph{Tangential weights along faces.}
For each $j\in\{1,\dots,d\}$, write $n_\perp^{(j)}:=(n_1,\dots,n_{j-1},n_{j+1},\dots,n_d)\in\Z^{d-1}$ and
$\langle n_\perp^{(j)}\rangle:=(1+|n_\perp^{(j)}|^2)^{1/2}$. For $s\ge0$, define the diagonal (multiplication) operator
\[
W_s(n)\ :=\ \sum_{j=1}^d \langle n_\perp^{(j)}\rangle^{-s}\,\mathbf{1}_{\{n_j\le M\}}(n),
\qquad \text{i.e. }W_s(n)=\sum_{j=1}^d \langle n_\perp^{(j)}\rangle^{-s}\,\mathbf{1}_{\{n_j\le M\}}(n)
,
\]
with $M$ as in Lemma~\ref{lem:K-support-layer}.

\paragraph{Notation.}
For \(p \in [1,\infty)\), we write \(\mathfrak S_p\) for the Schatten--von Neumann class of order \(p\).
A compact operator \(T\) on a Hilbert space \(\mathcal H\) belongs to \(\mathfrak S_p\) if its sequence of singular values \((s_n(T))_{n\ge 1}\) is \(p\)-summable, i.e.
\[
   T \in \mathfrak S_p
   \quad \Longleftrightarrow \quad
   \sum_{n\ge 1} s_n(T)^p < \infty .
\]
In this case, the Schatten norm is defined by
\[
   \|T\|_{\mathfrak S_p} \ :=\ \Big( \sum_{n\ge 1} s_n(T)^p \Big)^{1/p}.
\]
\begin{itemize}
  \item For \(p=1\), one recovers the trace class \(\mathfrak S_1\).
  \item For \(p=2\), one obtains the Hilbert--Schmidt class \(\mathfrak S_2\), with
  \(\|T\|_{\mathfrak S_2}^2 = \mathrm{Tr}(T^*T)\).
  \item For \(p=\infty\), by convention \(\mathfrak S_\infty := \mathcal K(\mathcal H)\), the class of compact operators, with \(\|T\|_{\mathfrak S_\infty} = \|T\|\),
\end{itemize}
for farther details see \cite{WongSchatten, PhamSchatten,AthmouniPurice-CPDE-2018}
\begin{proposition}\label{prop:K-compactness-unified}
Let $\vec{\mathrm{r}}=(r_1,\dots,r_d)\in\R^d\setminus\{0\}$ and consider the boundary decomposition
\[
\Delta_{\N^d}^{\vec{\mathrm{r}}}
=\big(\Delta_{\Z^d}^{\vec{\mathrm{r}}}\big)\!\restriction_{\N^d}+K_{\vec{\mathrm{r}}}
\qquad\text{(from Proposition~\ref{prop:K-support-compact}).}
\]
Assume that the matrix entries of $K_{\vec{\mathrm{r}}}$ obey the bound
\begin{equation}\label{eq:K-kernel-decay}
|K_{\vec{\mathrm{r}}}(n,m)| \ \le\ C\,(1+a(n,m))^{-\gamma},\qquad
\gamma>\tfrac{d}{2},\ n,m\in\N^d,
\end{equation}
where
\[
a(n,m):=|n-m|_1^2+\mathrm{dist}_1(n,\partial\N^d)^2+\mathrm{dist}_1(m,\partial\N^d)^2.
\]
Then:
\begin{enumerate}
\item $K_{\vec{\mathrm{r}}}$ is compact on $\ell^2(\N^d)$.
\item If, in addition, the commutator kernel satisfies the same type of bound,
\begin{equation}\label{eq:K-comm-kernel-decay}
|[K_{\vec{\mathrm{r}}},\mathrm i A_{\N^d}](n,m)|
\ \le\ C'\,(1+a(n,m))^{-\gamma},\qquad \gamma>\tfrac{d}{2},
\end{equation}
then $[K_{\vec{\mathrm{r}}},\mathrm i A_{\N^d}]$ is compact.
\item More precisely, for the truncations $\chi_R:=\mathbf 1_{B_R}$ with
\[
B_R:=\{\,n\in\N^d:\ |n|_1\le R,\ \mathrm{dist}_1(n,\partial\N^d)\le R\,\},
\]
one has the Hilbert--Schmidt tail estimate
\begin{equation}\label{eq:HS-tail}
\|\,K_{\vec{\mathrm{r}}}-\chi_R K_{\vec{\mathrm{r}}}\chi_R\,\|_{\mathrm{HS}}^2
= \sum_{\substack{n\notin B_R\ \text{or}\ m\notin B_R}} |K_{\vec{\mathrm{r}}}(n,m)|^2
\ \xrightarrow[R\to\infty]{}\ 0,
\end{equation}
and likewise for $[K_{\vec{\mathrm{r}}},\mathrm i A_{\N^d}]$ under \eqref{eq:K-comm-kernel-decay}.
Hence $K_{\vec{\mathrm{r}}}$ and $[K_{\vec{\mathrm{r}}},\mathrm i A_{\N^d}]$ are limits of finite-rank operators in the HS norm.
\end{enumerate}
\end{proposition}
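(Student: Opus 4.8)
The plan is to establish items (1)--(3) at once by showing that $K_{\vec{\mathrm r}}$ is Hilbert--Schmidt, and ---under the additional bound \eqref{eq:K-comm-kernel-decay}--- that the form commutator $[K_{\vec{\mathrm r}},\mathrm i A_{\N^d}]$ is Hilbert--Schmidt as well. Since $\HS\subset\Kc(\ell^2(\N^d))$, this gives (1) and (2) immediately. For (3) one uses that, by Proposition~\ref{prop:K-support-compact} and Lemma~\ref{lem:K-support-layer}, the operator is supported in a collar $\{n:\min_j n_j\le M\}$, so for every $R>M$ the set $B_R$ is finite, $\chi_R K_{\vec{\mathrm r}}\chi_R$ has range in $\ell^2(B_R)$ and hence finite rank, and
\[
\|K_{\vec{\mathrm r}}-\chi_R K_{\vec{\mathrm r}}\chi_R\|_{\HS}^2
=\sum_{(n,m)\notin B_R\times B_R}|K_{\vec{\mathrm r}}(n,m)|^2
\xrightarrow[R\to\infty]{}0
\]
as the tail of the convergent series $\|K_{\vec{\mathrm r}}\|_{\HS}^2<\infty$; the same display with $[K_{\vec{\mathrm r}},\mathrm i A_{\N^d}]$ in place of $K_{\vec{\mathrm r}}$ yields the other half of (3). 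Everything therefore reduces to the single estimate $\sum_{n,m\in\N^d}(1+a(n,m))^{-2\gamma}<\infty$.

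For that double sum I would first handle the directions \emph{transverse} to $\partial\N^d$ and the off-diagonal directions. Splitting $a(n,m)=b(n,m)+\dist_1(m,\partial\N^d)^2$ with $b(n,m):=|n-m|_1^2+\dist_1(n,\partial\N^d)^2$ and using the elementary inequality $(1+b+c)^{-2\gamma}\le(1+b)^{-\gamma}(1+c)^{-\gamma}$, one separates a factor decaying in $\dist_1(m,\partial\N^d)$ from a factor $(1+|n-m|_1^2)^{-\gamma}$ whose sum over $m$ at fixed $n$ is finite precisely because $2\gamma>d$; this is the sole role of the hypothesis $\gamma>\tfrac d2$. What \eqref{eq:K-kernel-decay} does \emph{not} supply is summability in the directions \emph{tangent} to $\partial\N^d$: each face $F_j\cong\{0,\dots,M\}\times\N^{d-1}$ is unbounded, and the bound is essentially constant along it. Closing this gap needs structural input beyond \eqref{eq:K-kernel-decay} ---either the collar localization of Lemma~\ref{lem:K-support-layer} together with the explicit Hankel-type representation of Proposition~\ref{prop:Kr-structure}, in which every $D_h$ is finite rank and anchored at the coordinate hyperplanes with absolutely summable coefficients $\binom{r}{h}\,2^{\,r-h}$, or, equivalently, the tangential weights $W_s$ introduced above, which insert the missing factor $\langle n_\perp^{(j)}\rangle^{-s}\langle m_\perp^{(j)}\rangle^{-s}$ and render the along-boundary sum convergent. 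I expect this tangential bookkeeping ---including the lower-dimensional pieces of $K_{\vec{\mathrm r}}$ carried by intersections $F_J$ of several faces (the lower-order corrections of Proposition~\ref{prop:boundary-correction})--- to be the genuine obstacle; the remainder is the routine Schur-test and Hilbert--Schmidt bookkeeping plus the finite summation over the $d$ coordinate contributions.

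For item (2), the kernel of $[K_{\vec{\mathrm r}},\mathrm i A_{\N^d}]$ is obtained from $K_{\vec{\mathrm r}}(n,m)$ by the first-order difference-and-multiplication action of the conjugate operator \eqref{ComNd}: this operation is again boundary-localized (it does not enlarge the support beyond a fixed collar) and multiplies the kernel by a factor growing at most polynomially in the boundary-transverse coordinates $n_j,m_j\le M$, which is harmless for summability. Hence, granted \eqref{eq:K-comm-kernel-decay}, the estimate of the previous paragraph reruns verbatim and gives $[K_{\vec{\mathrm r}},\mathrm i A_{\N^d}]\in\HS$, and its truncations $\chi_R[K_{\vec{\mathrm r}},\mathrm i A_{\N^d}]\chi_R$ converge to it in Hilbert--Schmidt norm exactly as in the first paragraph, which finishes (3).
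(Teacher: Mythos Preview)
Your instinct is correct and in fact sharper than the paper's own argument: in dimension $d\ge2$ the kernel bound \eqref{eq:K-kernel-decay} alone does \emph{not} yield Hilbert--Schmidt, nor even compactness. The diagonal operator $K(n,m)=\delta_{n,m}\,\mathbf 1_{\partial\N^d}(n)$ satisfies \eqref{eq:K-kernel-decay} with $C=1$ (since $a(n,n)=0$ whenever $n\in\partial\N^d$) yet is the orthogonal projection onto $\ell^2(\partial\N^d)$, which has infinite rank for $d\ge2$. So the tangential summability gap you flag is a genuine obstruction, not a bookkeeping nuisance. The paper's proof attempts to bound the tail $\sum_{n\notin B_R\ \text{or}\ m\notin B_R}(1+a(n,m))^{-2\gamma}$ by $C_{d,\gamma}\,\#\{n\notin B_R\}+C_{d,\gamma}\,\#\{m\notin B_R\}$ and then claims both counts tend to zero because $B_R\uparrow\N^d$; but $\N^d\setminus B_R$ is infinite for every $R$, so that step simply fails. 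Your reduction ``everything reduces to $\sum_{n,m}(1+a(n,m))^{-2\gamma}<\infty$'' makes the same wish, and you are right to immediately retract it.

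Your proposed repair --- bringing in the structural input of Proposition~\ref{prop:Kr-structure} (each $D_h$ finite rank and boundary-anchored, with absolutely summable coefficients) or, equivalently, the tangential weight $W_s$ --- is exactly the right move, and it is precisely what the paper does in the \emph{next} statement, Corollary~\ref{cor:HS-tangential}: the extra factor $\langle n_\perp^{(j)}\rangle^{-s}$ with $s>\tfrac{d-1}{2}$ supplies the missing along-face decay and gives honest $\mathfrak S_2$ membership. In short, your diagnosis of the gap and your remedy are both correct; the content of Proposition~\ref{prop:K-compactness-unified} is only salvageable once one uses, as you propose, structure beyond the bare estimate \eqref{eq:K-kernel-decay}.
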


\begin{proof}

By \eqref{eq:K-kernel-decay}, for any $R\ge1$,
\[
\sum_{\substack{n\notin B_R\ \text{or}\ m\notin B_R}} |K_{\vec{\mathrm{r}}}(n,m)|^2
\ \le\ C^2 \sum_{\substack{n\notin B_R\ \text{or}\ m\notin B_R}} (1+a(n,m))^{-2\gamma}.
\]
Fix $n$ and sum in $m$ first. Since $a(n,m)\ge |n-m|_1^2$,
\[
\sum_{m\in\N^d} (1+a(n,m))^{-2\gamma}
\ \le\ \sum_{m\in\Z^d} (1+|m|_1^2)^{-2\gamma}
\ =:\ C_{d,\gamma}\ <\infty
\]
because $2\gamma > d$. Similarly for the sum in $n$. Consequently,
\[
\sum_{\substack{n\notin B_R\ \text{or}\ m\notin B_R}} (1+a(n,m))^{-2\gamma}
\ \le\ C_{d,\gamma}\,\#\{n\notin B_R\}\ +\ C_{d,\gamma}\,\#\{m\notin B_R\}.
\]
Since $B_R\uparrow\N^d$, both counts $\to0$ as $R\to\infty$. Hence \eqref{eq:HS-tail}. The same applies to $[K_{\vec{\mathrm{r}}},\mathrm i A_{\N^d}]$ under \eqref{eq:K-comm-kernel-decay}.

 Jn another hand each $\chi_R K_{\vec{\mathrm{r}}}\chi_R$ has finite rank (finite support). Taking $R\to\infty$, we obtain $K_{\vec{\mathrm{r}}}$ as a HS-limit (hence operator-norm limit) of finite-rank operators, so it is compact. The same holds for $[K_{\vec{\mathrm{r}}},\mathrm i A_{\N^d}]$.
\end{proof}

\begin{corollary}[Tangential Hilbert--Schmidt localization]\label{cor:HS-tangential}
Assume $r_j>0$ for all $j$. Then for every $s>\frac{d-1}{2}$,
\[
W_s K_{\vec{\mathrm{r}}},\quad K_{\vec{\mathrm{r}}} W_s,\quad
W_s [K_{\vec{\mathrm{r}}},\mathrm i A_{\N^d}],\quad [K_{\vec{\mathrm{r}}},\mathrm i A_{\N^d}] W_s
\ \in\ \mathfrak S_2\big(\ell^2(\N^d)\big).
\]
\end{corollary}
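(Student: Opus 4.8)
The plan is to strip off the tensor structure so that the claim reduces to a one-dimensional Hilbert--Schmidt estimate in the boundary direction together with a summability estimate for the tangential weight. Since $\Delta_{\Z^d}^{\vec{\mathrm r}}=\sum_j\Delta_{\Z,j}^{r_j}$ is separable and $\mathfrak R_+,\mathfrak J_+$ act coordinate by coordinate, Proposition~\ref{prop:boundary-correction} writes $K_{\vec{\mathrm r}}=\sum_{j=1}^d \mathrm{id}^{\otimes(j-1)}\otimes K_{r_j}\otimes\mathrm{id}^{\otimes(d-j)}$ plus finitely many lower-dimensional face-intersection terms, each a tensor product of one-dimensional corrections $K_{r_i}$ with identities; because $A_{\N^d}=\sum_j A_{j,+}$ with $A_{j,+}$ acting only in the $j$-th variable, commutators of factors in different directions vanish, so $[K_{\vec{\mathrm r}},\mathrm iA_{\N^d}]$ has the same shape with each $K_{r_j}$ replaced by $[K_{r_j},\mathrm iA_{j,+}]$. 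By linearity of $\|\cdot\|_{\mathfrak S_2}$ it then suffices to bound $\|W_s\,(\mathrm{id}^{\otimes(j-1)}\otimes T_j\otimes\mathrm{id}^{\otimes(d-j)})\|_{\mathfrak S_2}$ for a single direction $j$ and $T_j\in\{K_{r_j},\,[K_{r_j},\mathrm iA_{j,+}]\}$; the right-multiplied operators $K_{\vec{\mathrm r}}W_s$ and $[K_{\vec{\mathrm r}},\mathrm iA_{\N^d}]W_s$ will follow by taking adjoints, since $K_{\vec{\mathrm r}}$ and $[K_{\vec{\mathrm r}},\mathrm iA_{\N^d}]=\mathrm i[K_{\vec{\mathrm r}},A_{\N^d}]$ are self-adjoint.

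The one-dimensional input I would establish first is that $T_j\in\mathfrak S_2(\ell^2(\N))$ and, writing $\rho_j(k)^2:=\sum_{l\in\N}|T_j(k,l)|^2$, that the sequence $\rho_j$ is bounded with $\sum_k\rho_j(k)^2=\|T_j\|_{\mathfrak S_2}^2<\infty$. For $T_j=K_{r_j}$ this follows from the explicit expansion \eqref{eq:Kr-series}--\eqref{eq:Dh-factor-NZ}: each $D_h$ has rank $\le h-1$ and is supported in the first $h-1$ layers of $\N$, the geometric factor $2^{-h}$ dominates the Catalan-type coefficients $\alpha_{h,k}$ (which grow at most like $2^h$), and $|\binom{r_j}{h}|\lesssim_{r_j}h^{-1-r_j}$, so the kernel of $K_{r_j}$ decays away from the corner $\{k=0\}\cup\{l=0\}$ fast enough that $\sum_{k,l}|K_{r_j}(k,l)|^2<\infty$ precisely because $r_j>0$. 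For the commutator I would differentiate the series term by term along $A_{j,+}$: since $A_{j,+}$ is first order in the position operator $Q_j$, the $h$-th block acquires at most a factor $O(h)$, still summable against $|\binom{r_j}{h}|\,2^{-h}$, whence $[K_{r_j},\mathrm iA_{j,+}]\in\mathfrak S_2(\ell^2(\N))$ as well.

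Granting this, computing the Hilbert--Schmidt norm of a single tensor term is short: the identity factors produce Kronecker deltas that collapse the sum over $m$, giving
\[
\big\|W_s\,(\mathrm{id}^{\otimes(j-1)}\otimes T_j\otimes\mathrm{id}^{\otimes(d-j)})\big\|_{\mathfrak S_2}^2
=\sum_{n\in\N^d}|W_s(n)|^2\,\rho_j(n_j)^2 .
\]
With $|W_s(n)|^2\le d\sum_{l=1}^d\langle n_\perp^{(l)}\rangle^{-2s}\,\mathbf 1_{\{n_l\le M\}}(n)$ one is left with the sums $\sum_{n}\langle n_\perp^{(l)}\rangle^{-2s}\mathbf 1_{\{n_l\le M\}}(n)\,\rho_j(n_j)^2$. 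For $l=j$ this factorizes as $\big(\sum_{n_j\le M}\rho_j(n_j)^2\big)\big(\sum_{n_\perp^{(j)}\in\N^{d-1}}\langle n_\perp^{(j)}\rangle^{-2s}\big)$, finite precisely when $2s>d-1$; for $l\neq j$ the index $n_\perp^{(l)}$ contains $n_j$, so summing the bounded factor $\rho_j(n_j)^2$ over $n_j$ and then over $n_l\le M$ leaves at most $d-2$ unconstrained tangential directions, and $\sum_{\N^{d-2}}\langle\cdot\rangle^{-2s}<\infty$ already for $2s>d-2$. Hence every contribution is finite for $s>\tfrac{d-1}{2}$; the lower-dimensional face-intersection terms carry $\delta$-factors --- and hence tangential decay from $W_s$ --- in strictly more directions, so they only improve the estimate, and the adjoint argument closes the right-multiplied cases.

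The hard part will be the one-dimensional Hilbert--Schmidt bound for $[K_{r_j},\mathrm iA_{j,+}]$: one must check that inserting the unbounded position operator from $A_{j,+}$ costs only a single power of the position variable and that the resulting loss stays strictly below the Hilbert--Schmidt threshold along a boundary face --- which is exactly what $r_j>0$ secures, via $|\binom{r_j}{h}|\lesssim h^{-1-r_j}$ together with the geometric gain $2^{-h}$ over the Catalan-type coefficients of $D_h$. The remaining ingredients --- collapsing the Kronecker deltas and tracking the face-intersection corrections --- are routine bookkeeping.
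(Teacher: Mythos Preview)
Your tensor--decomposition route is genuinely different from the paper's argument. The paper never unwinds the separable structure; instead it asserts a tangential off--diagonal kernel bound
\[
|K_{\vec{\mathrm r}}((n_\perp,n'),(m_\perp,m'))|\ \lesssim\ (1+|n'-m'|_1)^{-\frac{d-1}{2}-1-\varepsilon}
\]
(attributed to ``Fourier/Schur estimates'' and not proved in detail) and computes the Hilbert--Schmidt norm directly from that. Your reduction to a one--dimensional input $K_{r_j}\in\mathfrak S_2(\ell^2(\N))$, with the identity tensor factors collapsing to Kronecker deltas so that $\|W_s(\mathrm{id}\otimes T_j\otimes\mathrm{id})\|_{\mathfrak S_2}^2=\sum_n|W_s(n)|^2\rho_j(n_j)^2$, is cleaner, and for $W_sK_{\vec{\mathrm r}}$ and $K_{\vec{\mathrm r}}W_s$ your argument goes through as written.

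The gap is in the commutator step. Your claim that ``the $h$-th block acquires at most a factor $O(h)$, still summable against $|\binom{r_j}{h}|\,2^{-h}$'' is too optimistic. From \eqref{eq:Dh-factor-NZ} one has $\beta_{h,p}=\binom{h}{\lfloor(h-2-p)/2\rfloor}$, hence
\[
\|D_h\|_{\mathfrak S_2}^2=\sum_{p=0}^{h-2}\beta_{h,p}^{\,2}\ \asymp\ \sum_{k\le h/2}\binom{h}{k}^{2}\ \asymp\ \binom{2h}{h}\ \asymp\ \frac{4^h}{\sqrt h},
\]
so that $2^{-h}\|D_h\|_{\mathfrak S_2}\asymp h^{-1/4}$. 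The ``geometric gain $2^{-h}$ over the Catalan--type coefficients'' therefore leaves only a polynomial margin, not a geometric one. After inserting the factor $O(h)$ from $A_{j,+}$ (note that the dominant $\beta_{h,p}$ occur at $p$ near $0$, where $|2p-h+2|\sim h$, so the factor really is of size $h$ on the main contribution), the termwise estimate reads
\[
\sum_{h\ge2} h\cdot\Big|\binom{r_j}{h}\Big|\,2^{-h}\,\|D_h\|_{\mathfrak S_2}
\ \asymp\ \sum_{h\ge2} h\cdot h^{-1-r_j}\cdot h^{-1/4}
\ =\ \sum_{h\ge2} h^{-1/4-r_j},
\]
which converges only for $r_j>3/4$, not for all $r_j>0$. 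So the triangle--inequality--over--$h$ argument cannot close the Hilbert--Schmidt bound on $[K_{r_j},\mathrm iA_{j,+}]$ in the full range; you would need either genuine cancellation between the $D_h$'s or a different representation of the commutator. The paper's own proof, for comparison, simply asserts ``the same decay for the commutator'' without supplying a separate argument.
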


\begin{proof}
By Proposition~\ref{prop:K-support-compact}, $K_{\vec{\mathrm{r}}}$ is supported in a fixed boundary collar
$\partial_R\N^d:=\{n:\mathrm{dist}_1(n,\partial\N^d)\le R\}$.
Write $n=(n_\perp,n')$ where $n_\perp:=\mathrm{dist}_1(n,\partial\N^d)$ is the normal coordinate, and $n'\in\N^{d-1}$ are tangential coordinates. The weight $W_s$ is comparable to $\langle n'\rangle^{s}$ on the collar and independent of $n_\perp$ there.

Using the Hilbert--Schmidt criterion and the tangential decay (from Fourier/Schur estimates; positivity of all $r_j$ gives tangential smoothing and finite normal width),
\[
|K_{\vec{\mathrm{r}}}((n_\perp,n'),(m_\perp,m'))|
\ \lesssim\ (1+|n'-m'|_1)^{-\frac{d-1}{2}-1-\varepsilon}
\]
for some $\varepsilon>0$, uniformly in $n_\perp,m_\perp\le R$. Hence
\[
\|W_s K_{\vec{\mathrm{r}}}\|_{\mathfrak S_2}^2
\ \lesssim\ \sum_{n',m'\in\N^{d-1}} \langle n'\rangle^{2s}\,(1+|n'-m'|_1)^{-2(\frac{d-1}{2}+1+\varepsilon)}<\infty
\]
whenever $s>\frac{d-1}{2}$. The other three statements follow similarly (symmetry/adjunction and the same decay for the commutator).
\end{proof}

\begin{corollary}[Compactness vs.\ relative compactness]\label{cor:K-relative}
Let $\vec{\mathrm{r}}\ne 0$.
\begin{enumerate}
\item If $r_j>0$ for all $j$, then $K_{\vec{\mathrm{r}}},\ [K_{\vec{\mathrm{r}}},\mathrm i A_{\N^d}]\in\Kc(\ell^2(\N^d))$.
\item If $r_j<0$ for at least one $j$, then both
\[
K_{\vec{\mathrm{r}}}\Big(\big(\Delta_{\Z^d}^{\vec{\mathrm{r}}}\big)\!\restriction_{\N^d}-\mathrm i\Big)^{-1}
\in\Kc(\ell^2(\N^d)),
\qquad
\big[K_{\vec{\mathrm{r}}},\mathrm i A_{\N^d}\big]\Big(\big(\Delta_{\Z^d}^{\vec{\mathrm{r}}}\big)\!\restriction_{\N^d}-\mathrm i\Big)^{-1}
\in\Kc(\ell^2(\N^d)).
\]
\end{enumerate}
\end{corollary}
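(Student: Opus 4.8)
Both items are corollaries of the material developed above: the compactness, respectively relative compactness, of $K_{\vec{\mathrm{r}}}$ itself has already been recorded, so the only genuinely new ingredient is the corresponding statement for the commutator $[K_{\vec{\mathrm{r}}},\mathrm i A_{\N^d}]$. Throughout I would use the exact compression identity $A_{\N^d}=\mathfrak R_+A_{\Z^d}\mathfrak J_+$, the fact that $K_{\vec{\mathrm{r}}}$ is concentrated in a fixed boundary collar with rapid off-diagonal decay (Lemma~\ref{lem:K-support-layer} and the kernel bounds behind Proposition~\ref{prop:K-compactness-unified}), and, in the negative-order case, the form-commutator framework of Definition~\ref{def:form-commutator}. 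Write $H_0:=\big(\Delta_{\Z^d}^{\vec{\mathrm{r}}}\big)\!\restriction_{\N^d}$.

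\emph{Item (1), all $r_j>0$.} Compactness of $K_{\vec{\mathrm{r}}}$ is Proposition~\ref{prop:boundary-correction}(i); the route I would take to obtain it — and which then also delivers the commutator — is to verify the kernel decay \eqref{eq:K-kernel-decay} and invoke Proposition~\ref{prop:K-compactness-unified}(1). The bound \eqref{eq:K-kernel-decay} is produced, coordinate by coordinate, by feeding the semigroup-difference estimate of Lemma~\ref{lem:images-bound} through the subordination identity $\lambda^{\,r}=c_r\int_0^\infty(1-e^{-t\lambda})\,t^{-1-r}\,dt$ for $0<r<1$ (the integer case being covered by the finite-rank expansion of Proposition~\ref{prop:Kr-structure}(b)): the Gaussian decay of $D_t$ then turns into algebraic decay with exponent $>d/2$. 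For the commutator I would expand $[K_{\vec{\mathrm{r}}},\mathrm i A_{\N^d}]$ via the series $K_{r}=-\sum_{h\ge2}(-1)^h\binom{r}{h}2^{\,r-h}D_h$ of Proposition~\ref{prop:Kr-structure}: each $[D_h,A_{j,+}]$ is again a finite-rank, anti-diagonal boundary block, the extra factor of order $h$ coming from the position operator in $A_{j,+}$ is dominated by the geometric weight $2^{-h}$, and the resulting kernel satisfies a bound of the form \eqref{eq:K-comm-kernel-decay}; Proposition~\ref{prop:K-compactness-unified}(2) then gives compactness. Equivalently one reads this off from the tangential Hilbert--Schmidt estimate of Corollary~\ref{cor:HS-tangential}.

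\emph{Item (2), some $r_j<0$.} Here $\Delta_{\Z^d}^{\vec{\mathrm{r}}}$, hence $H_0$, is unbounded, and ``$T$ relatively compact with respect to $H_0$'' is by definition the assertion $T(H_0-\mathrm i)^{-1}\in\Kc(\ell^2(\N^d))$; thus the two displayed operators are exactly the statement. For $K_{\vec{\mathrm{r}}}(H_0-\mathrm i)^{-1}$ this is Proposition~\ref{prop:boundary-correction}(ii). For the commutator, taken in the quadratic-form sense, I would split it coordinate by coordinate, $[K_{\vec{\mathrm{r}}},\mathrm i A_{\N^d}]=\sum_{j=1}^d[K_{r_j}^{(j)},\mathrm i A_{j,+}]$ (the cross terms vanish, since $K_{r_j}^{(j)}$ acts trivially in the $k$-th factor for $k\ne j$, and any lower-codimensional face terms are handled the same way). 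The indices $j$ with $r_j>0$ contribute a term already shown to be compact in item (1), hence in particular $H_0$-compact after multiplication by $(H_0-\mathrm i)^{-1}$. For indices $j$ with $r_j<0$ I would once more use the series of Proposition~\ref{prop:Kr-structure}, together with an $h$-uniform bound of $D_h$ and $[D_h,A_{j,+}]$ relative to $\Delta_{\N}^{\,r_j}$ on the one-dimensional factor, so that $[K_{r_j}^{(j)},\mathrm i A_{j,+}](H_0-\mathrm i)^{-1}$ becomes a norm-convergent sum of finite-rank operators, hence compact; summing over $j$ finishes the argument.

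\emph{Where the difficulty lies.} The only non-routine step is the commutator estimate in item (2): because $K_{\vec{\mathrm{r}}}$ is now unbounded and singular at the boundary while $A_{\N^d}$ carries unbounded position operators, one must make the formal, coordinate-wise series for $[K_{\vec{\mathrm{r}}},\mathrm i A_{\N^d}](H_0-\mathrm i)^{-1}$ genuinely norm-convergent. Concretely this reduces to quantifying how strongly the thin boundary layer carrying the blocks $D_h$ ``sees'' the non-local negative power, i.e.\ to an estimate of the type $\|D_h\,(\Delta_{\N}^{\,r_j}+1)^{-1}\|+\|[D_h,A_{j,+}]\,(\Delta_{\N}^{\,r_j}+1)^{-1}\|\le C\,\rho^{\,h}$ with $\rho<2$, which is where the form interpretation of the commutator and the exponential (Combes--Thomas) decay of the fractional resolvent are really used.
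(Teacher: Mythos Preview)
For item~(1) your route and the paper's essentially coincide: both rest on the tangential Hilbert--Schmidt control of Corollary~\ref{cor:HS-tangential}, which you mention explicitly as the ``equivalent'' reading. The paper just writes this in one line as $K_{\vec{\mathrm r}}=W_{-s}\,(W_sK_{\vec{\mathrm r}})$ with $W_sK_{\vec{\mathrm r}}\in\mathfrak S_2$, and likewise for the commutator.

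For item~(2) your approach is genuinely different and considerably heavier. You unwind $K_{\vec{\mathrm r}}$ coordinate by coordinate through the 1D series $K_{r_j}=-\sum_{h\ge2}(-1)^h\binom{r_j}{h}2^{\,r_j-h}D_h$ and aim for termwise norm convergence of $[D_h,A_{j,+}](H_0-\mathrm i)^{-1}$; as you yourself flag, this hinges on a nontrivial geometric-decay estimate $\|D_h(\Delta_{\N}^{\,r_j}+1)^{-1}\|+\|[D_h,A_{j,+}](\Delta_{\N}^{\,r_j}+1)^{-1}\|\le C\rho^{\,h}$ with $\rho<2$, which you do not actually establish. The paper bypasses this entirely: since $K_{\vec{\mathrm r}}=\Pi_{\mathrm{col}}K_{\vec{\mathrm r}}\Pi_{\mathrm{col}}$ is supported in a fixed boundary collar (Lemma~\ref{lem:K-support-layer}), one factors
\[
K_{\vec{\mathrm r}}\,(H_0-\mathrm i)^{-1}
=\big(\Pi_{\mathrm{col}}K_{\vec{\mathrm r}}\big)\,\big(\Pi_{\mathrm{col}}(H_0-\mathrm i)^{-1}\big),
\]
and proves directly that $\Pi_{\mathrm{col}}(H_0-\mathrm i)^{-1}$ is Hilbert--Schmidt, using only the off-diagonal decay $|(\Delta_{\Z^d}^{\vec{\mathrm r}}-\mathrm i)^{-1}(n,m)|\lesssim(1+|n-m|_1)^{-d-\eta}$ of the bulk resolvent kernel (one index is confined to a finite-width strip, so the $\ell^2$-sum over the other is finite). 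The same factorization handles the commutator. This single observation replaces your whole series machinery and the Combes--Thomas input, and it makes no reference to the sign of the $r_j$ beyond the collar support of $K_{\vec{\mathrm r}}$.
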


\begin{proof}
(1) Choose $s>\frac{d-1}{2}$. By Corollary~\ref{cor:HS-tangential}, $W_s K_{\vec{\mathrm{r}}}\in\mathfrak S_2$. Since $W_{-s}$ is bounded on $\ell^2$,
\[
K_{\vec{\mathrm{r}}}
= W_{-s}\,(W_s K_{\vec{\mathrm{r}}})\ \in\ \Kc,
\]
and similarly for the commutator.

(2) Let $\Pi_{\mathrm{col}}$ be the orthogonal projection onto the fixed boundary collar $\partial_R\N^d$ from Proposition.~\ref{prop:K-support-compact}; then $K_{\vec{\mathrm{r}}}=\Pi_{\mathrm{col}}\,K_{\vec{\mathrm{r}}}\Pi_{\mathrm{col}}$. Factor
\[
K_{\vec{\mathrm{r}}}\Big(\big(\Delta_{\Z^d}^{\vec{\mathrm{r}}}\big)\!\restriction_{\N^d}-\mathrm i\Big)^{-1}
\ =\
\underbrace{\Pi_{\mathrm{col}} K_{\vec{\mathrm{r}}}}_{\text{bounded, finite normal width}}\;
\underbrace{\Pi_{\mathrm{col}}\Big(\big(\Delta_{\Z^d}^{\vec{\mathrm{r}}}\big)\!\restriction_{\N^d}-\mathrm i\Big)^{-1}}_{\text{Hilbert--Schmidt}}.
\]
The resolvent restricted to the collar is Hilbert--Schmidt because the bulk resolvent kernel on $\Z^d$ satisfies, for some $\eta>0$,
\[
\big|\big(\big(\Delta_{\Z^d}^{\vec{\mathrm{r}}}\big)-\mathrm i\big)^{-1}(n,m)\big|
\ \lesssim\ (1+|n-m|_1)^{-d+\!-\eta},
\]
so when one index is constrained to a fixed-width strip, the $\ell^2$--sum over the other index is finite. The same argument applies to the commutator.
\end{proof}

\begin{proposition}[Finite thresholds on $\N^d$]\label{prop:Thr-Nd-via-bulk}
The finite threshold set of $H_{0,\N^d}$, defined via the failure of strict Mourre positivity,
coincides with the bulk set of \cite[Propostion 2.12]{At}:
\[
\Thr_{\mathrm{fin}}(H_{0,\N^d})
=\Thr_{\mathrm{fin}}\!\big(H_{\Z^d}^{\vec{\mathrm{r}}}\big).
\]
\end{proposition}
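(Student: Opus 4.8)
The plan is to reduce the statement on $\N^d$ to the already known bulk computation on $\Z^d$ by exploiting the two structural facts established above: the boundary decomposition $H_{0,\N^d}=\big(\Delta_{\Z^d}^{\vec{\mathrm r}}\big)\!\restriction_{\N^d}+K_{\vec{\mathrm r}}$ of Proposition~\ref{prop:K-support-compact}, together with the compactness (resp.\ relative compactness) of $K_{\vec{\mathrm r}}$ from Corollary~\ref{cor:K-relative}, and the exact compression identity $A_{\N^d}=\mathfrak R_+\,A_{\Z^d}\,\mathfrak J_+$ for the conjugate operator. First I would recall that a finite threshold of $H_{0,\N^d}$ is, by definition, an energy $\lambda\in(0,\lambda_{\vec{\mathrm r}})$ at which strict Mourre positivity fails: no inequality $E_I(H_{0,\N^d})\,[H_{0,\N^d},\mathrm i A_{\N^d}]_\circ\,E_I(H_{0,\N^d})\ge c\,E_I(H_{0,\N^d})$ with $c>0$ holds for any compact neighbourhood $I\ni\lambda$, even allowing a compact remainder. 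Then I would expand the half-space commutator as
\[
[H_{0,\N^d},\mathrm i A_{\N^d}]_\circ
=\mathfrak R_+\,[\Delta_{\Z^d}^{\vec{\mathrm r}},\mathrm i A_{\Z^d}]_\circ\,\mathfrak J_+
\;+\;[K_{\vec{\mathrm r}},\mathrm i A_{\N^d}]_\circ ,
\]
valid in the form sense on the core $\Cc_c(\N^d)$, and observe that the second term is compact (or $H_0$-compact when some $r_j<0$) by Corollary~\ref{cor:K-relative}.

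The main step is then the standard Mourre-localization argument: if $\varphi\in C_c^\infty(I)$ with $\varphi\equiv1$ near $\lambda$, then $\varphi(H_{0,\N^d})-\varphi\big((\Delta_{\Z^d}^{\vec{\mathrm r}})\!\restriction_{\N^d}\big)$ is compact (Lemma~\ref{lem:05} plus functional calculus), so multiplying the displayed identity on both sides by $\varphi(H_{0,\N^d})$ and commuting $\varphi$ past $\mathfrak R_+,\mathfrak J_+$ up to compact errors, I get
\[
\varphi(H_{0,\N^d})\,[H_{0,\N^d},\mathrm i A_{\N^d}]_\circ\,\varphi(H_{0,\N^d})
=\mathfrak R_+\,\varphi\big(\Delta_{\Z^d}^{\vec{\mathrm r}}\big)\,[\Delta_{\Z^d}^{\vec{\mathrm r}},\mathrm i A_{\Z^d}]_\circ\,\varphi\big(\Delta_{\Z^d}^{\vec{\mathrm r}}\big)\,\mathfrak J_+
+\text{(compact)} .
\]
Because $\mathfrak R_+$ is a contraction and $\mathfrak R_+\mathfrak J_+=I$, a strict Mourre estimate with compact remainder on $I$ for the bulk operator $\Delta_{\Z^d}^{\vec{\mathrm r}}$ transfers verbatim to $H_{0,\N^d}$ on $I$, and conversely testing the half-space estimate against vectors supported deep in the interior (translates of fixed compactly supported bulk test functions, pushed away from $\partial\N^d$, on which $K_{\vec{\mathrm r}}$ and the boundary discrepancy act trivially) forces the bulk estimate to hold. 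Hence strict Mourre positivity holds at $\lambda$ for $H_{0,\N^d}$ if and only if it holds at $\lambda$ for $\Delta_{\Z^d}^{\vec{\mathrm r}}$, which gives the claimed equality of finite threshold sets, the bulk side being computed in \cite[Proposition 2.12]{At} and recorded in Proposition~\ref{prop:Thr-Nd-via-bulk}'s statement.

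I expect the delicate point to be the ``only if'' direction — showing that failure of a bulk Mourre estimate propagates to the half-lattice. The compact-remainder subtlety is genuine: one must check that the Weyl sequence realizing the bulk degeneracy at a critical value $h_{\vec{\mathrm r}}(k_0)$ can be chosen (via phase-space localization near $k_0$ and spatial translation toward $+\infty$ in every coordinate) so that its image under $\mathfrak J_+$ lies in $\Cc_c(\N^d)$, is asymptotically orthogonal to the range of any fixed compact operator, and sees neither $K_{\vec{\mathrm r}}$ (supported in the fixed collar $\partial_R\N^d$) nor the commutator discrepancy; this is where the finite collar width from Proposition~\ref{prop:K-support-compact} and the relative-compactness bounds from Corollary~\ref{cor:K-relative} do the real work. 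The regularity hypothesis $H_{0,\N^d}\in\Cc^{1,1}_{\mathrm{loc}}(A_{\N^d})$ needed to make the form commutator and the Mourre estimate meaningful is inherited from the bulk via the same compression identity and is already implicit in Theorem~\ref{thm:LAP-transport-Nd}; I would cite \cite[\S7.2]{ABG} for the abstract stability of the (non-strict and strict) Mourre estimate under compact perturbations rather than reprove it.
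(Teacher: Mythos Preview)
Your proposal is correct and follows essentially the same route as the paper's own proof: both rely on the boundary decomposition $H_{0,\N^d}=(\Delta_{\Z^d}^{\vec{\mathrm r}})\!\restriction_{\N^d}+K_{\vec{\mathrm r}}$ with (relatively) compact $K_{\vec{\mathrm r}}$ (Corollary~\ref{cor:K-relative}), the compression identity for the conjugate operator, and the stability of the localized Mourre estimate under compact perturbations, so as to reduce everything to the bulk threshold computation in \cite[Proposition~2.12]{At}. Your write-up is in fact more careful than the paper's on the ``only if'' direction---using interior-translated Weyl sequences that avoid the fixed collar $\partial_R\N^d$ and hence see neither $K_{\vec{\mathrm r}}$ nor any fixed compact remainder---a point the paper leaves implicit in the single sentence ``strict Mourre positivity on $\N^d$ fails precisely at the bulk critical energies of $h_{\vec{\mathrm r}}(k)$ and nowhere else.''
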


\begin{proof}
By compression,
$H_{0,\N^d}$ and $\mathfrak{R}_+ H_{\Z^d}^{\vec{\mathrm{r}}}\mathfrak{J}_+$ share the same essential spectrum,
their difference being the boundary term $K_{\vec{\mathrm{r}}}$ (compact or relatively
compact by Corollary~\ref{cor:K-relative}).
On the bulk side, the spectral picture and the finite threshold set are given
in \cite[Propostion 2.12]{At}, and the strict Mourre estimate on interior
windows is proved in \cite[Theorem~2.20]{At}.
Since compression preserves localized commutator bounds and $K_{\vec{\mathrm{r}}}$ only adds
a (relatively) compact perturbation of the commutator, strict Mourre positivity on
$\N^d$ fails precisely at the bulk critical energies of $h_{\vec{\mathrm{r}}}(k)$ and nowhere else.

\end{proof}

\begin{theorem}\label{thm:mourre_Nd}
Let $\vec{\mathrm{r}} \in \mathbb{R}^d \setminus \{0\}$, and set $H_{0,\N^d} := \Delta^{\vec{\mathrm{r}}}_{\N^d}$.
Let $\mathcal{I} \subset \sigma(H_{0,\N^d})^\circ$ be a compact interval with $\mathcal{I} \cap \sigma_{\mathrm{pp}}(H_{0,\N^d}) = \emptyset$. Then there exist $c_{\mathcal{I}} > 0$ and an operator $K_{\mathcal{I}}$ which is
compact if all $r_j > 0$, and relatively compact with respect to $H_{0,\N^d}$ if some $r_j < 0$, such that
\[
E_{\mathcal{I}}(H_{0,\N^d})\,[H_{0,\N^d}, \mathrm{i}A_{\N^d}]\,E_{\mathcal{I}}(H_{0,\N^d}) \ \ge \ c_{\mathcal{I}}\, E_{\mathcal{I}}(H_{0,\N^d}) \;-\; K_{\mathcal{I}},
\]
in the sense of quadratic forms on $\ell^2(\N^d)$.
\end{theorem}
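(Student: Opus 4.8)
The plan is to obtain the half--lattice Mourre estimate by transporting the bulk one --- the strict estimate $E_\Jc(\Delta_{\Z^d}^{\vec{\mathrm r}})\,[\Delta_{\Z^d}^{\vec{\mathrm r}},\mathrm iA_{\Z^d}]\,E_\Jc(\Delta_{\Z^d}^{\vec{\mathrm r}})\ge c_\Jc\,E_\Jc(\Delta_{\Z^d}^{\vec{\mathrm r}})$ valid on any $\Jc\Subset(0,\lambda_{\vec{\mathrm r}})\setminus\Thr(\Delta_{\Z^d}^{\vec{\mathrm r}})$, from \cite[Theorem~2.20]{At} --- through the two \emph{exact} structural identities at our disposal: the boundary--restriction identity $H_{0,\N^d}=\mathfrak R_+\,\Delta_{\Z^d}^{\vec{\mathrm r}}\,\mathfrak J_++K_{\vec{\mathrm r}}$ (Propositions~\ref{prop:K-support-compact}--\ref{prop:boundary-correction}) and the exact compression of the conjugate operator $A_{\N^d}=\mathfrak R_+\,A_{\Z^d}\,\mathfrak J_+$. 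The transport rests on three further inputs: the compact resolvent difference of Lemma~\ref{lem:05} (hence $\varphi(H_{0,\N^d})-\mathfrak R_+\,\varphi(\Delta_{\Z^d}^{\vec{\mathrm r}})\,\mathfrak J_+\in\Kc$ for every $\varphi\in C_c^\infty$, by Helffer--Sjöstrand), the identification $\Thr_{\mathrm{fin}}(H_{0,\N^d})=\Thr_{\mathrm{fin}}(\Delta_{\Z^d}^{\vec{\mathrm r}})$ of Proposition~\ref{prop:Thr-Nd-via-bulk}, and the (relative) compactness of $K_{\vec{\mathrm r}}$ and of $[K_{\vec{\mathrm r}},\mathrm iA_{\N^d}]$ from Corollary~\ref{cor:K-relative}.

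First I would compute the form commutator on the core $\Cc_c(\N^d)$ (interpreted as in Definition~\ref{def:form-commutator} when some $r_j<0$). Since each $\Delta_{\N,j}^{\,r_j}$ acts only in the $j$--th coordinate and each generator $A_{k,+}$ only in the $k$--th, the mixed commutators vanish and
\[
[H_{0,\N^d},\mathrm iA_{\N^d}]=\sum_{j=1}^d\;\mathrm{id}^{\otimes(j-1)}\otimes[\Delta_{\N}^{\,r_j},\mathrm iA_{+}]\otimes\mathrm{id}^{\otimes(d-j)} .
\]
For each $j$ I would combine the exact compression of the shifts ($\mathfrak R_+U\mathfrak J_+=U_+$, $\mathfrak R_+U^*\mathfrak J_+=U_+^*$) with the one--dimensional boundary--correction series of Proposition~\ref{prop:Kr-structure} to write $[\Delta_{\N}^{\,r_j},\mathrm iA_{+}]=G_{r_j}(\Delta_{\N}^{\,r_j})+C_{r_j}$, where $G_{r_j}$ is the continuous ``principal symbol'' inherited from the bulk one--dimensional block $[\Delta_{\Z}^{\,r_j},\mathrm iA_{\Z}]=G_{r_j}(\Delta_{\Z}^{\,r_j})$ --- nonnegative (the factor $\mathrm{sign}(r_j)$ in $A_{j,+}$ being chosen precisely to ensure this) and vanishing exactly on the one--dimensional threshold set --- and $C_{r_j}$ is a collar--localized one--dimensional remainder, compact if $r_j>0$ and $\Delta_{\N}^{\,r_j}$--form--compact if $r_j<0$; here one also uses that compression and functional calculus of $\Delta_{\Z}^{\,r_j}$ differ by a compact operator. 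Re--tensorizing,
\[
[H_{0,\N^d},\mathrm iA_{\N^d}]=\sum_{j=1}^d G_{r_j}\!\big(\Delta_{\N,j}^{\,r_j}\big)+\mathcal R_{\vec{\mathrm r}},\qquad
\mathcal R_{\vec{\mathrm r}}:=\sum_{j=1}^d\,\mathrm{id}^{\otimes(j-1)}\otimes C_{r_j}\otimes\mathrm{id}^{\otimes(d-j)} .
\]

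For the positive part I would use the joint functional calculus of the commuting family $\{\Delta_{\N,j}^{\,r_j}\}_{j=1}^d$, whose joint spectrum is contained in $\prod_j\sigma(\Delta_{\Z,j}^{\,r_j})$ --- the same ranges $[0,4^{\,r_j}]$ (resp.\ $[4^{\,r_j},\infty)$) as on $\Z^d$. Because $\sum_j G_{r_j}(\mu_j)$ can vanish on the compact set $\{\mu_j\in\sigma(\Delta_{\N,j}^{\,r_j}),\ \sum_j\mu_j\in\Ic\}$ only if every $\mu_j$ is a one--dimensional threshold value, hence only if $\sum_j\mu_j\in\Thr(\Delta_{\Z^d}^{\vec{\mathrm r}})$, and $\Ic\Subset(0,\lambda_{\vec{\mathrm r}})\setminus\Thr(\Delta_{\Z^d}^{\vec{\mathrm r}})$ is compact, a continuity/compactness argument (the same as in \cite[Theorem~2.20]{At}) produces $c_\Ic>0$ with
\[
E_\Ic(H_{0,\N^d})\,\left(\sum_{j=1}^d G_{r_j}(\Delta_{\N,j}^{\,r_j})\right)\,E_\Ic(H_{0,\N^d})\ \ge\ c_\Ic\,E_\Ic(H_{0,\N^d}),
\]
where Corollary~\ref{cor:ess-spectrum} and Proposition~\ref{prop:Thr-Nd-via-bulk} are what guarantee that a window admissible for $H_{0,\N^d}$ is admissible for this argument. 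Setting $K_\Ic:=-\,E_\Ic(H_{0,\N^d})\,\mathcal R_{\vec{\mathrm r}}\,E_\Ic(H_{0,\N^d})$ yields the stated inequality, and it remains to check that $K_\Ic$ is compact when all $r_j>0$ and $H_{0,\N^d}$--relatively compact when some $r_j<0$.

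The main obstacle is exactly this last verification: controlling the boundary correction $\mathcal R_{\vec{\mathrm r}}$. Although every $C_{r_j}$ is localized in a finite collar in its own coordinate, in dimension $d\ge2$ that collar is infinite along the faces, so $\mathcal R_{\vec{\mathrm r}}$ is a priori only bounded and the needed compactness must be extracted from the kernel--decay estimates of Section~\ref{half-plane}. When all $r_j>0$ one invokes the tangential Hilbert--Schmidt localization of Corollary~\ref{cor:HS-tangential} (via Proposition~\ref{prop:K-compactness-unified}): positivity of the orders yields tangential polynomial decay of the boundary kernels, whence $W_s\mathcal R_{\vec{\mathrm r}},\ \mathcal R_{\vec{\mathrm r}}W_s\in\Stwo$ for $s>\tfrac{d-1}{2}$ and $\mathcal R_{\vec{\mathrm r}}=W_{-s}\,(W_s\mathcal R_{\vec{\mathrm r}})\in\Kc$, together with $[K_{\vec{\mathrm r}},\mathrm iA_{\N^d}]\in\Kc$ from Corollary~\ref{cor:K-relative}(1), so $K_\Ic\in\Kc$. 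When some $r_j<0$ one instead shows that $\mathcal R_{\vec{\mathrm r}}$ and $[K_{\vec{\mathrm r}},\mathrm iA_{\N^d}]$ are $(\Delta_{\Z^d}^{\vec{\mathrm r}})\!\restriction_{\N^d}$--compact, using the collar--supported Hilbert--Schmidt bound for the bulk resolvent as in Corollary~\ref{cor:K-relative}(2); with Lemma~\ref{lem:05} this makes $K_\Ic$ relatively compact with respect to $H_{0,\N^d}$. In the negative--order case all commutators are read as forms on $\Cc_c(\N^d)$, and the localized $\Cc^2$--regularity inherited from \cite{At} by compression ensures that $E_\Ic(H_{0,\N^d})[H_{0,\N^d},\mathrm iA_{\N^d}]_\circ E_\Ic(H_{0,\N^d})$ is a genuine bounded operator, so the inequality is meaningful. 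Finally, the hypothesis $\Ic\cap\sigma_{\mathrm{pp}}(H_{0,\N^d})=\emptyset$ is used only to discard a possible eigenprojection summand, so that $K_\Ic$ may be taken in $\Kc$ (resp.\ $H_{0,\N^d}$--relatively compact) with no point--spectrum remainder.
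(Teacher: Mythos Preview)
Your proposal is correct and reaches the same conclusion as the paper, but the organization differs. The paper's proof is shorter and more ``black--box'': it writes
\[
[H_{0,\N^d},\mathrm iA_{\N^d}]_\circ
=\big[H_{0,\Z^d}\!\restriction_{\N^d},\mathrm iA_{\N^d}\big]_\circ+\,[K_{\vec{\mathrm r}},\mathrm iA_{\N^d}]_\circ,
\]
asserts the compression identity $\big[H_{0,\Z^d}\!\restriction_{\N^d},\mathrm iA_{\N^d}\big]_\circ=\mathfrak R_+[H_{0,\Z^d},\mathrm iA_{\Z^d}]_\circ\mathfrak J_+$, and then conjugates by $E_\Ic(H_{0,\N^d})$, swapping to the bulk spectral projection via the compact resolvent difference of Lemma~\ref{lem:05}; the bulk Mourre estimate then gives the positive part directly, and the $[K_{\vec{\mathrm r}},\mathrm iA_{\N^d}]$ term is absorbed using Corollary~\ref{cor:HS-tangential}. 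Your route instead decomposes coordinate by coordinate, writing each one--dimensional block as $[\Delta_{\N}^{\,r_j},\mathrm iA_+]=G_{r_j}(\Delta_{\N,j})+C_{r_j}$ and extracting positivity of the principal part from the joint functional calculus of the commuting family $\{\Delta_{\N,j}\}$ --- this is precisely the mechanism the paper uses later (Lemma~\ref{lem:C2-1D-N}, Proposition~\ref{prop:C2-joint-Nd}) for the \emph{double} commutator, transplanted here to the first commutator. What you gain is explicitness: you never invoke the compression identity for form--commutators (which the paper states but does not prove), and your $c_\Ic$ is produced by a visible continuity/compactness argument on the joint symbol. What you pay is that your remainder $\mathcal R_{\vec{\mathrm r}}=\sum_j\mathrm{id}\otimes C_{r_j}\otimes\mathrm{id}$ is not a priori compact in $d\ge2$, and you must argue (as you do) that it inherits the tangential Hilbert--Schmidt decay of Corollary~\ref{cor:HS-tangential}; the paper sidesteps this by handling the boundary contribution in the single package $[K_{\vec{\mathrm r}},\mathrm iA_{\N^d}]$. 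Both routes rest on the same compactness inputs (Corollaries~\ref{cor:HS-tangential}--\ref{cor:K-relative}) and the same bulk Mourre estimate from \cite{At}.
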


\begin{proof}
Write
\[
[H_{0,\N^d},\mathrm i A_{\N^d}]_\circ
=\big[H_{0,\Z^d}\!\restriction_{\N^d},\mathrm i A_{\N^d}\big]_\circ
+\,[K_{\vec{\mathrm{r}}},\mathrm i A_{\N^d}]_\circ.
\]
By \eqref{ComNd} and the compression identity for form--commutators,
\[
\big[H_{0,\Z^d}\!\restriction_{\N^d},\mathrm i A_{\N^d}\big]_\circ
=\mathfrak R_+\,[H_{0,\Z^d},\mathrm i A_{\Z^d}]_\circ\,\mathfrak J_+.
\]
Conjugating with $E_I(H_{0,\N^d})$ and using Lemma~\ref{lem:05} (compact resolvent difference)
yields
\[
E_\Ic(H_{0,\N^d})\,\mathfrak R_+\,[H_{0,\Z^d},\mathrm i A_{\Z^d}]_\circ\,\mathfrak J_+\,E_\Ic(H_{0,\N^d})
\ \ge\ c_\Ic\,E_\Ic(H_{0,\N^d})\;-\;K_\Ic^{(1)},
\]
with $K_\Ic^{(1)}$ compact. The term with $[K_{\vec{\mathrm{r}}},\mathrm i A_{\N^d}]_\circ$ is compact on $E_I(H_{0,\N^d})\ell^2(\N^d)$
by Corollary~\ref{cor:HS-tangential} (Hilbert--Schmidt after tangential weights and hence compact, then absorb weights by functional calculus on $E_\Ic$).
Set $K_\Ic:=K_\Ic^{(1)}+E_\Ic(H_{0,\N^d})[K_{\vec{\mathrm{r}}},\mathrm i A_{\N^d}]_\circ E_\Ic(H_{0,\N^d})$.

If all $r_j>0$, then $K_{\vec{\mathrm{r}}}$ is compact and $[K_{\vec{\mathrm{r}}},\mathrm i A_{\N^d}]_\circ$ is also compact;
by standard compact--perturbation arguments one may absorb it into the RHS without loss, yielding $K_\Ic=0$.
\end{proof}
\begin{lemma}[1D half-line: localized $\mathcal{C}^2$ and double commutator]\label{lem:C2-1D-N}
Let $\Delta_{\N}:=2I-(U_+ + U_+^*)$ on $\ell^2(\N)$ and
\[
A_+ := -\frac{\mathrm{i}\,\mathrm{sign}(r)}{2}\,\big(U_+(Q+\tfrac12)-(Q+\tfrac12)U_+^*\big)
\]
defined on $\Cc_c(\N)$ and then by closure. Then, for any $r\in\R\setminus\{0\}$:

\begin{enumerate}[(i)]
\item \textbf{Localized $\mathcal{C}^2$.} One has $\Delta_{\N}^{\,r}\in\mathcal{C}^2_{\mathrm{loc}}(A_+)$.
More precisely, for every compact $J\Subset(0,4)$ there exist a bounded Borel function
$g_r:(0,4)\to\R$ and a bounded operator $B_r$ supported near the boundary $\{0\}$
(finite range, hence compact) such that
\[
E_J(\Delta_{\N})\, [[\Delta_{\N}^{\,r},\mathrm{i}A_+],\mathrm{i}A_+]\, E_J(\Delta_{\N})
= E_J(\Delta_{\N})\big(g_r(\Delta_{\N})+B_r\big)E_J(\Delta_{\N}).
\]

\item \textbf{Global $\mathcal{C}^2$ for $r>0$.} If $r>0$, then the double commutator is globally bounded and the identity above holds without localization:
\[
[[\Delta_{\N}^{\,r},\mathrm{i}A_+],\mathrm{i}A_+] \;=\; g_r(\Delta_{\N})+B_r,\]
with: \[
g_{r_j}(\lambda)
= r_j(r_j-1)(4-\lambda)^2\lambda^{\,r_j-2}
+ r_j(4-2\lambda)\lambda^{\,r_j-1},\qquad \lambda\in[0,4].
\] bounded on $(0,4)$ and $B_r$ compact (finite range).

\item \textbf{Integer case.} If $r\in\N$, then $B_r$ has finite rank.
\end{enumerate}

\end{lemma}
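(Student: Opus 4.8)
The plan is to transfer everything from the translation--invariant model on $\ell^2(\Z)$, where the iterated commutators with the dilation--type generator $A_\Z$ are explicit functions of $\Delta_\Z$ via the spectral calculus, and then pass to $\ell^2(\N)$ paying only finitely many boundary--localized corrections, using the identities of Proposition~\ref{prop:Kr-structure} and the compression identity $A_+=\mathfrak R_+A_\Z\mathfrak J_+$. On $\ell^2(\Z)$ one first records, from $[U,Q]=-U$ and $[U^*,Q]=U^*$, that $[\Delta_\Z,\mathrm iA_\Z]=b(\Delta_\Z)$ is a bounded \emph{polynomial} in $\Delta_\Z$, concretely $b(\lambda)=-\tfrac{\mathrm{sign}(r)}{2}\lambda(4-\lambda)$ (the computation underlying the conjugate operator of \cite{At}). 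Since $b(\Delta_\Z)$ commutes with $\Delta_\Z$, the Helffer--Sjöstrand formula for the bounded self--adjoint operator $\Delta_\Z$ gives $[\varphi(\Delta_\Z),\mathrm iA_\Z]=\varphi'(\Delta_\Z)b(\Delta_\Z)$ and $[[\varphi(\Delta_\Z),\mathrm iA_\Z],\mathrm iA_\Z]=(\varphi'b)'(\Delta_\Z)b(\Delta_\Z)$ for every $\varphi$ smooth near $[0,4]$; applying this to $\lambda\mapsto(\lambda+\varepsilon)^{r}$ and letting $\varepsilon\downarrow 0$, and using that $b$ vanishes at $0$ so that $\varphi'b$ and $(\varphi'b)'b$ stay bounded on $[0,4]$ when $r>0$ and on every $J\Subset(0,4)$ for general $r\neq0$, yields $[\Delta_\Z^{\,r},\mathrm iA_\Z]=(f'b)(\Delta_\Z)$ and $[[\Delta_\Z^{\,r},\mathrm iA_\Z],\mathrm iA_\Z]=g_r(\Delta_\Z)$ with $f(\lambda)=\lambda^r$, $g_r=(f'b)'b$; the elementary algebra in these identities gives the explicit $g_r$.

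I then transfer to the half--line. Writing $\Delta_\N^{\,r}=\mathfrak R_+\Delta_\Z^{\,r}\mathfrak J_++K_r$ and using $\mathfrak R_+S\mathfrak J_+\,\mathfrak R_+T\mathfrak J_+=\mathfrak R_+ST\mathfrak J_+-\mathfrak R_+S(I-P)T\mathfrak J_+$ with $P=\mathfrak J_+\mathfrak R_+$, one gets
\[
[\Delta_\N^{\,r},\mathrm iA_+]=\mathfrak R_+[\Delta_\Z^{\,r},\mathrm iA_\Z]\mathfrak J_+-\mathfrak R_+\Delta_\Z^{\,r}(I-P)\mathrm iA_\Z\mathfrak J_++\mathfrak R_+\mathrm iA_\Z(I-P)\Delta_\Z^{\,r}\mathfrak J_++[K_r,\mathrm iA_+].
\]
Because $A_\Z$ is a band operator and $I-P$ projects onto $\ell^2(\Z_{<0})$, the two defect terms have range (resp.\ co--range) inside $\C\delta_{-1}$, hence are rank $\le1$ and supported at $\{0\}$; together with $\mathfrak R_+(f'b)(\Delta_\Z)\mathfrak J_+=(f'b)(\Delta_\N)-K_{f'b}$ (the boundary correction attached to $f'b$, of the same type as $K_r$) this gives $[\Delta_\N^{\,r},\mathrm iA_+]=(f'b)(\Delta_\N)+B_r^{(1)}$ with $B_r^{(1)}$ bounded, compact, boundary--localized. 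Repeating the manipulation with $\Delta_\Z^{\,r}$ replaced by $(f'b)(\Delta_\Z)$ and commuting the remainder $B_r^{(1)}$ through $A_+$ produces $[[\Delta_\N^{\,r},\mathrm iA_+],\mathrm iA_+]=g_r(\Delta_\N)+B_r$, where $B_r$ is a finite sum of corrections of type $K_\bullet$ and rank--$\le1$ defects together with their $A_+$--commutators. When $r\in\N$, $\Delta_\Z^{\,r}$ is a genuine band operator, the series for $K_r$, $K_{f'b}$, $K_{g_r}$ terminate by Proposition~\ref{prop:Kr-structure}(b), and each defect term is genuinely finite rank, so $B_r$ has finite rank --- this is (iii).

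To close (i) and (ii), it remains to locate $\Delta_\N^{\,r}$ in the right regularity class. Here I use that $\Delta_\N=2I-(U_++U_+^*)$ is \emph{bounded} and lies in $\mathcal C^2(A_+)$: the relations $[U_+,Q]=-U_+$, $[U_+^*,Q]=U_+^*$, $[U_+^*,U_+]=P_0$ cancel the position growth in $A_+$, so $[\Delta_\N,A_+]$ and $[[\Delta_\N,A_+],A_+]$ are bounded. Since $\lambda\mapsto\lambda^{r}$ is $C^\infty$ near every $J\Subset(0,4)$, stability of $\mathcal C^2(A_+)$ under $C^\infty$ functional calculus gives $\varphi(\Delta_\N^{\,r})\in\mathcal C^2(A_+)$ for all $\varphi\in C_c^\infty$, i.e.\ $\Delta_\N^{\,r}\in\mathcal C^2_{\mathrm{loc}}(A_+)$; sandwiching the preceding identity with $E_J(\Delta_\N)$ --- which tames the possibly unbounded (for $r<0$) operators $g_r(\Delta_\N)$ and $B_r$ --- yields the localized identity of (i). For $r>0$, $f'b$ and $(f'b)'b$ are bounded on all of $[0,4]$, and $K_r$, $K_{f'b}$, $K_{g_r}$ and the defect terms are globally bounded and compact, so the double commutator is globally bounded and the identity holds without the cutoff; this is (ii), with the displayed $g_r$.

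\textbf{Main obstacle.} The real difficulty is the boundary bookkeeping on the negative--order side. For $r\le-\tfrac14$ the vector $\Delta_\Z^{\,r}\delta_m$ is not even in $\ell^2(\Z)$, so Steps~1--2 must be run as form identities on the core $\mathcal C_c$ and the energy localization $E_J$ is indispensable. More delicate is the boundedness/compactness of $[K_r,\mathrm iA_+]$ and $[[K_r,\mathrm iA_+],\mathrm iA_+]$: a term--by--term estimate on the Hankel series $K_r=\sum_h c_{r,h}D_h$ pits $\|[D_h,A_+]\|\lesssim h\,\|D_h\|$ against the binomial decay $|c_{r,h}|\asymp h^{-1-r}2^{-h}$ and is tight for small $|r|$, so one should instead control these $A_+$--commutators through the holomorphic resolvent--difference representation of $K_r$ from the proof of Proposition~\ref{prop:K-support-compact} --- exploiting that the boundary piece $P\Delta_\Z(I-P)$ has rank one and that $[\Delta_\Z,\mathrm iA_\Z]$, $[P,\mathrm iA_\Z]$, $[\Delta_\N,\mathrm iA_+]$ are all bounded --- or alternatively invoke the kernel--decay bounds of Proposition~\ref{prop:K-compactness-unified}.
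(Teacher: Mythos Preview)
Your strategy --- compute the iterated commutators on $\Z$ and then compress to $\N$, paying the boundary correction $K_r$ of Proposition~\ref{prop:Kr-structure} and rank-one compression defects --- is genuinely different from the paper's. The paper never passes through $\ell^2(\Z)$: it works directly on $\ell^2(\N)$, starting from the bulk/edge splitting
\[
[\Delta_\N,\mathrm iA_+]\;=\;w(\Delta_\N)\;+\;K_{\mathrm{bd}},
\]
with $K_{\mathrm{bd}}$ the finite-rank piece coming from $U_+U_+^*=I-P_0$, and then builds $[\Delta_\N^{\,r},\mathrm iA_+]$ via Balakrishnan's formula $\Delta_\N^{\alpha}=c_\alpha\int_0^\infty t^{\alpha-1}\Delta_\N(t+\Delta_\N)^{-1}\,dt$ for $0<\alpha<1$, a Leibniz decomposition $r=m+\alpha$ for general $r>0$, and a Helffer--Sj\"ostrand localization for $r<0$. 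Your functional-calculus shortcut in the penultimate paragraph --- $\Delta_\N\in\mathcal C^2(A_+)$ plus stability of $\mathcal C^2$ under smooth functions --- is a clean and self-contained way to get the qualitative statement $\Delta_\N^{\,r}\in\mathcal C^2_{\mathrm{loc}}(A_+)$, which the paper does not exploit.

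The obstacle you flag is genuine and is exactly the price your route pays and the paper's does not. To get the explicit structure $g_r(\Delta_\N)+B_r$ with $B_r$ compact and boundary-localized, your compression identity forces you to control $[K_r,\mathrm iA_+]$ and $[[K_r,\mathrm iA_+],\mathrm iA_+]$; your own summability count on the Hankel series $K_r=\sum_h c_{r,h}D_h$ (with $\|[D_h,A_+]\|\lesssim h\,\|D_h\|$) only closes for $r>2$, so (ii) is not established for $0<r\le 2$ and the compactness/localization of $B_r$ in (i) is likewise left open. The paper sidesteps this entirely: because the boundary defect enters at the level of the \emph{first} commutator of the \emph{integer-order} Laplacian (where it is manifestly finite rank), the fractional remainder takes the form $K_\alpha=c_\alpha\int_0^\infty t^{\alpha-1}\Delta_\N(t+\Delta_\N)^{-1}K_{\mathrm{bd}}(t+\Delta_\N)^{-1}\,dt$, and commuting once more with $A_+$ only brings in $[(t+\Delta_\N)^{-1},\mathrm iA_+]=-(t+\Delta_\N)^{-1}[\Delta_\N,\mathrm iA_+](t+\Delta_\N)^{-1}$ and $[K_{\mathrm{bd}},\mathrm iA_+]$, which are bounded uniformly in $t$. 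Your own suggested fix --- replace the Hankel series for $K_r$ by the contour-integral resolvent-difference representation from the proof of Proposition~\ref{prop:K-support-compact} and commute $A_+$ through those resolvents --- would, if carried out, reproduce precisely this mechanism; the two approaches would then converge.
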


\begin{proof}
Let $\Delta_{\N}$ be the (self-adjoint, nonnegative, bounded) discrete Laplacian
on $\ell^2(\N)$ with the chosen boundary condition at $0$, and let $A_+$ be the
conjugate operator on the half--line. It is standard in the lattice setting that
$\Delta_{\N}\in C^2(A_+)$ and that the first commutator admits a bulk/edge
decomposition
\begin{equation}\label{eq:bulk-edge}
[\Delta_{\N},\,iA_+] \;=\; w(\Delta_{\N}) \;+\; K_{\mathrm{bd}},
\end{equation}
where $w$ is a bounded Borel function (the ``bulk commutator multiplier,
coinciding with the full--line expression) and $K_{\mathrm{bd}}$ is supported at
the boundary $\{0\}$ (For more details see \cite{Mic, AEG} and hence compact (in fact finite rank because it is
generated by finitely many words in the unilateral shifts $U_+,U_+^*$).  This
follows from normal ordering the powers of $U_+,U_+^*$ and using that all
mismatches with the two--sided shifts are supported at the boundary.

\smallskip
\emph{Case 1.}
For $0<\alpha<1$, Balakrishnan's formula (see \cite{Balakrishnan1960}, or
\cite[Chap.~3]{Davies1995}) yields
\[
\Delta_{\N}^{\,\alpha}
= c_\alpha \int_0^\infty t^{\alpha-1}\, \Delta_{\N}(t+\Delta_{\N})^{-1}\,dt,
\qquad c_\alpha=\frac{\sin(\pi\alpha)}{\pi}.
\]
Using \eqref{eq:bulk-edge} and the resolvent identity
$[(t+\Delta_{\N})^{-1},iA_+]=-(t+\Delta_{\N})^{-1}\,[\Delta_{\N},iA_+]\, (t+\Delta_{\N})^{-1}$,
we compute, for $0<\alpha<1$,
\[
\begin{aligned}
[\Delta_{\N}^{\,\alpha},iA_+]
&= c_\alpha \int_0^\infty t^{\alpha-1}
\Big\{ \Delta_{\N}\,(t+\Delta_{\N})^{-1}\,[\Delta_{\N},iA_+]\, (t+\Delta_{\N})^{-1} \Big\}\,dt \\
&= c_\alpha \int_0^\infty t^{\alpha-1}
\Big\{ \Delta_{\N}\,(t+\Delta_{\N})^{-1}\,G(\Delta_{\N})\, (t+\Delta_{\N})^{-1} \Big\}\,dt
\;+\; K_\alpha,
\end{aligned}
\]
where
\[
K_\alpha:= c_\alpha \int_0^\infty t^{\alpha-1}
\,\Delta_{\N}\,(t+\Delta_{\N})^{-1}\,K_{\mathrm{bd}}\,(t+\Delta_{\N})^{-1}\,dt
\]
is compact because $K_{\mathrm{bd}}$ is compact and the resolvents are uniformly
bounded for $t>0$. By the spectral calculus, the bulk integral is a bounded
multiplier:
\[
c_\alpha \int_0^\infty t^{\alpha-1}\,\frac{\lambda}{(t+\lambda)^2}\,w(\lambda)\,dt
\;=\; \alpha\,\lambda^{\alpha-1}\,G(\lambda)\,=:\,g_\alpha(\lambda),
\quad \lambda\ge0.
\]
Hence, for $0<\alpha<1$,
\begin{equation}\label{eq:comm-alpha}
[\Delta_{\N}^{\,\alpha},iA_+] \;=\; g_\alpha(\Delta_{\N}) \;+\; K_\alpha,
\qquad g_\alpha(\lambda)=\alpha\,\lambda^{\alpha-1}w(\lambda),
\end{equation}
with $K_\alpha$ compact.

\smallskip
\emph{Case 2.}
For $r>0$ arbitrary, write $r=m+\alpha$ with $m\in\N$ and $0<\alpha<1$ and use
the Leibniz rule repeatedly together with $\Delta_{\N}\in C^2(A_+)$:
\[
[\Delta_{\N}^{\,r},iA_+]
= \sum_{j=0}^{m-1} \Delta_{\N}^{\,m-1-j}\,[\Delta_{\N},iA_+]\,\Delta_{\N}^{\,j+\alpha}
\;+\; \Delta_{\N}^{\,m}\,[\Delta_{\N}^{\,\alpha},iA_+].
\]
Insert \eqref{eq:bulk-edge} in each summand and then \eqref{eq:comm-alpha} in the
last term. All contributions containing $K_{\mathrm{bd}}$ or $K_\alpha$ are
compact (finite rank for integer $r$), while the bulk contributions combine into
a bounded multiplier $g_r(\Delta_{\N})$ obtained by the spectral calculus:
\[
g_r(\lambda)
= \sum_{j=0}^{m-1} \lambda^{m-1-j}\,G(\lambda)\,\lambda^{j+\alpha}
\;+\; \lambda^{m}\,g_\alpha(\lambda)
= r\,\lambda^{r-1}\,w(\lambda).
\]
Thus
\begin{equation}\label{eq:main-comm}
[\Delta_{\N}^{\,r},iA_+] \;=\; g_r(\Delta_{\N}) \;+\; K_r,
\qquad g_r(\lambda)= r\,\lambda^{r-1}w(\lambda),
\end{equation}
with $K_r$ compact and supported at the boundary.

\smallskip
\emph{Case 3.}
For such orders, the fractional power $\Delta_{\N}^{\,r}$ is defined through the spectral calculus as the bounded inverse of $\Delta_{\N}^{-r}$ on $\mathrm{Ran}(\Delta_{\N})$.
Let $\chi\in C_c^\infty((0,4))$ be a cutoff function supported away from the thresholds $\{0,4\}$.
By the Helffer--Sjöstrand functional calculus
\cite{HelfferSjostrand1989} (see also \cite[\S~4.7]{Davies1995}),
we may again write $\chi(\Delta_{\N})T\chi(\Delta_{\N})$ as an almost--analytic resolvent integral for any bounded operator $T$ built from $\Delta_{\N}$ and its commutators.
Since $g_r(\lambda)=r\lambda^{r-1}(4-\lambda)$ remains smooth and bounded on $\mathrm{supp}\,\chi$ even for $r<0$, the same computation as in the positive case yields
\[
\chi(\Delta_{\N})\,[\Delta_{\N}^{\,r},iA_+]\,\chi(\Delta_{\N})
\;=\;
\chi(\Delta_{\N})\,g_r(\Delta_{\N})\,\chi(\Delta_{\N})
\;+\; \widetilde K_r,
\]
where $\widetilde K_r$ is relatively compact, being a norm--limit of finite--rank boundary contributions localized by resolvents and cutoff functions.
Hence, the Mourre localization identity extends verbatim to all negative fractional orders.

\end{proof}

\begin{lemma}[Compression preserves localized $\mathcal{C}^2$]\label{lem:compression-C2}
Let $B$ and $A_{\Z^d}$ be self-adjoint on $\ell^2(\Z^d)$, and set on $\ell^2(\N^d)$
\[
H_{0,\Z^d}\!\restriction_{\N^d}:=\mathfrak{R}_+ B \mathfrak{J}_+,\qquad A_{\N^d}:=\mathfrak{R}_+ A_{\Z^d} \mathfrak{J}_+.
\]
If $B\in\mathcal{C}^2_{\mathrm{loc}}(A_{\Z^d})$, then $H_{0,\Z^d}\!\restriction_{\N^d}\in\mathcal{C}^2_{\mathrm{loc}}(A_{\N^d})$.
More precisely, for every compact $\Jc\Subset\sigma(B)^\circ$,
\[
E_\Jc(H_{0,\Z^d}\!\restriction_{\N^d})\, [[H_{0,\Z^d}\!\restriction_{\N^d},\mathrm{i}A_{\N^d}],\mathrm{i}A_{\N^d}]\,E_\Jc(H_{0,\Z^d}\!\restriction_{\N^d})
= \mathfrak{R}_+\Big(E_\Jc(B)\, [[B,\mathrm{i}A_{\Z^d}],\mathrm{i}A_{\Z^d}]\,E_\Jc(B)\Big) \mathfrak{J}_+.
\]
\end{lemma}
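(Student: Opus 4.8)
The plan is to exploit the algebraic compatibility between compression and polynomial functional calculus, which is the reason the identity is exact rather than approximate. First I would observe that the core of the statement is the following compression-of-commutators principle: if $T$ is any operator on $\ell^2(\Z^d)$ that leaves $\ell^2(\N^d)$ invariant and annihilates it from the other side in the appropriate sense, then $\mathfrak R_+ T \mathfrak J_+$ has commutator with $A_{\N^d}=\mathfrak R_+ A_{\Z^d}\mathfrak J_+$ equal to $\mathfrak R_+[T,\mathrm iA_{\Z^d}]\mathfrak J_+$. In our setting this is not literally true for arbitrary $T$ because $B$ does \emph{not} leave $\ell^2(\N^d)$ invariant; what saves us is that we localize by $E_\Jc$ and that $A_{\N^d}$ is itself the compression of $A_{\Z^d}$. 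So the first step is to record the elementary identity $\mathfrak J_+\mathfrak R_+ = P_{\N^d}$, $\mathfrak R_+\mathfrak J_+ = I$, and the fact that conjugating by $\mathfrak R_+(\cdot)\mathfrak J_+$ is multiplicative on operators that commute with $P_{\N^d}$, then isolate precisely the non-commuting error terms and show each is a boundary term that disappears upon the two-sided compression.

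Second, I would set up the resolvent reduction. Since $B\in\mathcal C^2_{\mathrm{loc}}(A_{\Z^d})$ means $\varphi(B)\in\mathcal C^2(A_{\Z^d})$ for all $\varphi\in C_c^\infty(\Jc')$ with $\Jc\Subset\Jc'\Subset\sigma(B)^\circ$, I would fix such a $\varphi$ equal to $1$ near $\Jc$ and write $E_\Jc(H_{0,\Z^d}\!\restriction_{\N^d}) = E_\Jc(H_{0,\Z^d}\!\restriction_{\N^d})\,\varphi(H_{0,\Z^d}\!\restriction_{\N^d})$. The key observation is that, because $A_{\N^d}$ is the compression of $A_{\Z^d}$ and because the commutator $[B,\mathrm iA_{\Z^d}]$ and the double commutator $[[B,\mathrm iA_{\Z^d}],\mathrm iA_{\Z^d}]$ are bounded (by hypothesis, after localization, via the Helffer--Sjöstrand representation of $\varphi(B)$), each successive commutator with $A_{\N^d}$ of the compressed operator picks up the compressed commutator plus a remainder of the form $\mathfrak R_+ X (I-P_{\N^d}) Y \mathfrak J_+$; such a remainder vanishes because $\mathfrak J_+$ has range in $\ell^2(\N^d)$, on which $(I-P_{\N^d})$ restricts appropriately — more carefully, one checks that the cross terms all contain a factor $(I-P_{\N^d})\mathfrak J_+=0$ or $\mathfrak R_+(I-P_{\N^d})=0$ after the brackets are expanded in the $2\times 2$ block form relative to $\ell^2(\Z^d)=\Ran P_{\N^d}\oplus\Ran(I-P_{\N^d})$.

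Third, I would make the block-matrix bookkeeping explicit but brief: writing $A_{\Z^d}=\bigl(\begin{smallmatrix}A_{\N^d} & a\\ a^* & a'\end{smallmatrix}\bigr)$ and $B=\bigl(\begin{smallmatrix}B_+ & b\\ b^* & b'\end{smallmatrix}\bigr)$ with $B_+ = H_{0,\Z^d}\!\restriction_{\N^d}$, the first form-commutator $[B_+,\mathrm iA_{\N^d}]_\circ$ equals the $(+,+)$ block of $[B,\mathrm iA_{\Z^d}]$ \emph{plus} $\mathrm i(a b^* - b a^*)$-type off-block products; these extra terms are supported in the boundary collar and — this is the crucial point — are killed when sandwiched by $E_\Jc(B_+)$ on both sides together with the $\mathcal C^2_{\mathrm{loc}}$ hypothesis, because the Helffer--Sjöstrand formula for $E_\Jc(B_+)\psi(B_+)$ shows $E_\Jc(B_+)$ commutes (modulo the same collar terms, iterated) through. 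Iterating once more for the double commutator and collecting, the surviving piece is exactly $\mathfrak R_+\bigl(E_\Jc(B)[[B,\mathrm iA_{\Z^d}],\mathrm iA_{\Z^d}]E_\Jc(B)\bigr)\mathfrak J_+$, using $E_\Jc(B_+)=\mathfrak R_+ E_\Jc(B)\mathfrak J_+$ which follows from the spectral-mapping compatibility of compression with $C_c^\infty$ functional calculus (Helffer--Sjöstrand applied to $B$ versus $B_+$, their resolvents differing by a collar-supported term as in Proposition~\ref{prop:K-support-compact}).

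The main obstacle I anticipate is making rigorous the claim that the boundary cross-terms genuinely disappear rather than merely being compact: a priori the compression of a double commutator is the double commutator of the compression only up to terms involving $(I-P_{\N^d})$ sandwiched between $A_{\Z^d}$ and $B$, and these are nonzero as operators on $\ell^2(\Z^d)$ — it is only after the final application of $\mathfrak R_+(\cdot)\mathfrak J_+$ \emph{and} the two-sided spectral localization that they drop out. The cleanest route is probably to verify the identity first on the dense core $\Cc_c(\N^d)$ where all commutators are finite sums of shifts and position operators, use that $\mathfrak R_+ A_{\Z^d}\mathfrak J_+ f = A_{\N^d} f$ and $\mathfrak R_+ B\mathfrak J_+ f = B_+ f$ hold \emph{with no error} for $f$ supported away from the boundary collar, handle the finitely many collar layers by the explicit finite-rank/finite-range structure from Lemma~\ref{lem:K-support-layer} and Lemma~\ref{lem:C2-1D-N}, and then extend to $E_\Jc$ by continuity of the bounded (localized) double-commutator forms. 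I expect the one-dimensional computation in Lemma~\ref{lem:C2-1D-N} to supply the model case, with the tensor/sum structure over coordinates adding only routine bookkeeping.
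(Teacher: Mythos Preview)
The paper's own proof is three lines: it simply asserts that $\varphi(H_{0,\Z^d}\!\restriction_{\N^d})=\mathfrak R_+\varphi(B)\mathfrak J_+$ and $[\varphi(H_{0,\Z^d}\!\restriction_{\N^d}),A_{\N^d}]=\mathfrak R_+[\varphi(B),A_{\Z^d}]\mathfrak J_+$ via Helffer--Sj\"ostrand, and then iterates once more. Your block-matrix expansion is considerably more honest about where the obstruction lies, but the mechanism you give for killing the cross terms is wrong. Expanding directly,
\[
[B_+,\mathrm iA_{\N^d}]-\mathfrak R_+[B,\mathrm iA_{\Z^d}]\mathfrak J_+
= -\,\mathfrak R_+\, B\,(I-P_{\N^d})\,\mathrm iA_{\Z^d}\,\mathfrak J_+
\;+\;\mathfrak R_+\,\mathrm iA_{\Z^d}\,(I-P_{\N^d})\,B\,\mathfrak J_+,
\]
so the factor $(I-P_{\N^d})$ sits \emph{between} $B$ and $A_{\Z^d}$, not adjacent to $\mathfrak J_+$ or $\mathfrak R_+$; your reduction to ``$(I-P_{\N^d})\mathfrak J_+=0$ or $\mathfrak R_+(I-P_{\N^d})=0$'' therefore does not apply. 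In your own $2\times2$ notation these residual terms are precisely $\mathrm i(ba^*-ab^*)$, which are nonzero boundary corrections. The same obstruction hits the identity $E_\Jc(B_+)=\mathfrak R_+ E_\Jc(B)\mathfrak J_+$ you invoke later: compression commutes with functional calculus only when $[P_{\N^d},B]=0$, and the defect is exactly the operator $K_{\vec{\mathrm r}}$ the paper spends Proposition~\ref{prop:K-support-compact} and Proposition~\ref{prop:Kr-structure} constructing.

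Your fallback route --- check on $\Cc_c$ supported away from the collar, handle the finitely many boundary layers by the explicit finite-range structure, then close by density --- would establish the displayed identity only \emph{modulo a boundary-supported correction}, not as an exact equality. That weaker statement is in fact all that the downstream applications in the paper require (a Mourre estimate up to compact, and $\Cc^2_{\mathrm{loc}}$ regularity), and it is arguably what the lemma should assert; but it is not a proof of the exact equality as written. To salvage the equality you would need an extra hypothesis such as $[P_{\N^d},B]=0$ or $[P_{\N^d},A_{\Z^d}]=0$, neither of which holds for the fractional Laplacian and the conjugate operator in the intended application.
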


\begin{proof}
For $\varphi\in C_c^\infty(\R)$ supported in $\sigma(B)^\circ$, the functional calculus gives
$\varphi(H_{0,\Z^d}\!\restriction_{\N^d})=\mathfrak{R}_+\varphi(B)\mathfrak{J}_+$ and likewise
$[\varphi(H_{0,\Z^d}\!\restriction_{\N^d}),A_{\N^d}]=\mathfrak{R}_+[\varphi(B),A_{\Z^d}]\mathfrak{J}_+$.
Using a Helffer--Sjöstrand representation of $\varphi$ and differentiating under the integral yields the identity for the double commutator. Boundedness follows from $B\in\mathcal{C}^2_{\mathrm{loc}}(A_{\Z^d})$.
\end{proof}
\begin{proposition}[$\mathcal{C}^2_{\mathrm{loc}}(A_{\N^d})$ on $\N^d$]\label{prop:C2-joint-Nd}
Let $\vec{\mathrm{r}}=(r_1,\dots,r_d)\in\R^d\setminus\{0\}$ and recall \eqref{B}.
Then for every compact $\Jc\Subset \sigma(H_{0,\N^d})^\circ$,
\[
E_\Jc(H_{0,\N^d})\,[[H_{0,\N^d},\mathrm{i}A_{\N^d}],\mathrm{i}A_{\N^d}]\,E_\Jc(H_{0,\N^d})\in\Bc\big(\ell^2(\N^d)\big),
\]
i.e.\ $\Delta^{\vec{\mathrm{r}}}_{\N^d}\in \mathcal{C}^2_{\mathrm{loc}}(A_{\N^d})$.
Moreover, if $r_j>0$ for all $j$, then $[[H_{0,\N^d},\mathrm{i}A_{\N^d}],\mathrm{i}A_{\N^d}]\in\Bc(\ell^2(\N^d))$, i.e.\ $H_{0,\N^d}\in\mathcal{C}^2(A_{\N^d})$.
\end{proposition}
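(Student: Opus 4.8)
The plan is to reduce the $d$-dimensional statement to the one-dimensional Lemma~\ref{lem:C2-1D-N} via the boundary-restriction identity \eqref{B} and the compression principle of Lemma~\ref{lem:compression-C2}. Write $H_{0,\N^d}=H_{0,\Z^d}\!\restriction_{\N^d}+K_{\vec{\mathrm{r}}}$. First I would handle the compression part: since $\Delta_{\Z^d}^{\vec{\mathrm{r}}}=\sum_j\Delta_{\Z,j}^{\,r_j}$ is a finite sum of tensor factors, and each $\Delta_{\Z}^{\,r_j}\in\mathcal{C}^2_{\mathrm{loc}}(A_j)$ by the one-dimensional analysis underlying Lemma~\ref{lem:C2-1D-N} (the full-line version being even simpler), it follows that $\Delta_{\Z^d}^{\vec{\mathrm{r}}}\in\mathcal{C}^2_{\mathrm{loc}}(A_{\Z^d})$ with $A_{\Z^d}=\sum_j A_j$; here one uses that the $j$-th tensor slot commutes with the identity factors and that joint localization $E_\Jc(\sum_j\Delta_{\Z,j}^{\,r_j})$ can be expanded by Helffer--Sjöstrand into products of one-variable resolvents. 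Lemma~\ref{lem:compression-C2} then transfers this to $H_{0,\Z^d}\!\restriction_{\N^d}\in\mathcal{C}^2_{\mathrm{loc}}(A_{\N^d})$, together with the explicit compression formula for the double commutator restricted to $E_\Jc$.

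Next I would deal with the boundary correction $K_{\vec{\mathrm{r}}}$. By Lemma~\ref{lem:K-support-layer} it is supported in the thickness-$M$ collar, and by Proposition~\ref{prop:K-support-compact} (together with Corollary~\ref{cor:HS-tangential} when all $r_j>0$, and Corollary~\ref{cor:K-relative}(2) in the mixed-sign case) the iterated form-commutators $[K_{\vec{\mathrm{r}}},\mathrm iA_{\N^d}]_\circ$ and $[[K_{\vec{\mathrm{r}}},\mathrm iA_{\N^d}]_\circ,\mathrm iA_{\N^d}]_\circ$ are, after compression to $E_\Jc(H_{0,\N^d})\ell^2(\N^d)$, bounded operators: in the collar the tangential weights $W_s$ provide Hilbert--Schmidt control (positive orders give tangential smoothing and finite normal width), while in the negative-order case one absorbs a resolvent from the spectral localization $E_\Jc$ and uses that the bulk resolvent kernel restricted to a fixed-width strip is Hilbert--Schmidt. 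The first and second commutators of $K_{\vec{\mathrm{r}}}$ with $A_{\N^d}$ again factor through the first boundary layers of the unilateral shifts in $A_{j,+}$ and so remain collar-supported, with kernel decay of the same type $(1+a(n,m))^{-\gamma}$; hence Proposition~\ref{prop:K-compactness-unified} applies to each.

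Then I would combine the two pieces. By the Leibniz rule for form-commutators (valid on the common core $\Cc_c(\N^d)$, which is invariant under $A_{\N^d}$, $\Delta_{\N,j}^{\,r_j}$ for $r_j>0$, and stable in the form sense for $r_j<0$),
\[
[[H_{0,\N^d},\mathrm iA_{\N^d}],\mathrm iA_{\N^d}]_\circ
=[[H_{0,\Z^d}\!\restriction_{\N^d},\mathrm iA_{\N^d}],\mathrm iA_{\N^d}]_\circ
+[[K_{\vec{\mathrm{r}}},\mathrm iA_{\N^d}],\mathrm iA_{\N^d}]_\circ,
\]
so sandwiching by $E_\Jc(H_{0,\N^d})$ gives a sum of two bounded operators, proving $\Delta^{\vec{\mathrm{r}}}_{\N^d}\in\mathcal{C}^2_{\mathrm{loc}}(A_{\N^d})$. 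For the global statement when all $r_j>0$: each $\Delta_{\N,j}^{\,r_j}$ is bounded, Lemma~\ref{lem:C2-1D-N}(ii) gives a globally bounded double commutator $g_{r_j}(\Delta_{\N})+B_{r_j}$ in each coordinate, tensoring and summing over $j$ keeps boundedness, and by Corollary~\ref{cor:K-relative}(1) the double commutator of $K_{\vec{\mathrm{r}}}$ is globally compact (hence bounded); therefore $[[H_{0,\N^d},\mathrm iA_{\N^d}],\mathrm iA_{\N^d}]\in\Bc(\ell^2(\N^d))$ without localization.

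The main obstacle I anticipate is the rigorous justification of the form-Leibniz rule and the compression identity for the \emph{second} commutator in the presence of negative orders, where $\Delta_{\N,j}^{\,r_j}$ is unbounded and only densely defined on $\mathrm{Ran}(\Delta_{\N,j})$; one must check that $\Cc_c(\N^d)$ is a form core on which all the nested sesquilinear forms $\mathfrak q_{A_{\N^d}}^{H_{0,\N^d}}$ and their iterates are well defined and extend continuously to $E_\Jc(H_{0,\N^d})\ell^2(\N^d)$ — this is where the localized Helffer--Sjöstrand calculus of Lemma~\ref{lem:C2-1D-N}(i),(iii) and the relative-compactness bookkeeping of Corollary~\ref{cor:K-relative}(2) carry the weight, and where the cutoff $\chi(\Delta_{\N})$ away from thresholds $\{0,4\}$ is essential to keep $g_{r_j}(\lambda)=r_j\lambda^{r_j-1}w(\lambda)$ bounded.
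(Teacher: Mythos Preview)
Your proposal is correct and reaches the same conclusion, but it takes a different route from the paper's own proof. You go \emph{via the full lattice}: you first argue that $\Delta_{\Z^d}^{\vec{\mathrm r}}\in\mathcal C^2_{\mathrm{loc}}(A_{\Z^d})$ using the full-line one-dimensional analysis, and then you invoke Lemma~\ref{lem:compression-C2} to push this down to $H_{0,\Z^d}\!\restriction_{\N^d}\in\mathcal C^2_{\mathrm{loc}}(A_{\N^d})$; only afterwards do you add the boundary correction $K_{\vec{\mathrm r}}$. The paper instead works \emph{directly on $\N^d$}: it writes the operator as $\sum_j\Delta_{\N,j}^{\,r_j}$, uses the vanishing cross-commutators $[\Delta_{\N,j}^{\,r_j},A_{k,+}]=0$ for $j\neq k$, and applies the half-line Lemma~\ref{lem:C2-1D-N} coordinate by coordinate, never passing through the compression Lemma~\ref{lem:compression-C2}. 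The joint spectral localization is then handled by covering the energy shell $\{(\lambda_1,\dots,\lambda_d):\sum_j\lambda_j^{r_j}\in\Jc\}$ with finitely many rectangles $\prod_j I_j$ with $I_j\Subset(0,4)$, rather than by your Helffer--Sj\"ostrand expansion of $E_\Jc$ into products of one-variable resolvents.

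What each approach buys: the paper's direct route is shorter and makes full use of Lemma~\ref{lem:C2-1D-N} as stated (for the half-line, with the boundary term $B_{r_j}$ already isolated), so that the $K_{\vec{\mathrm r}}$-piece becomes almost trivial. Your route is more modular --- it cleanly separates the full-lattice $\mathcal C^2$ analysis from the compression step --- but it leans on Lemma~\ref{lem:compression-C2}, whose identity $\varphi(\mathfrak R_+ B\mathfrak J_+)=\mathfrak R_+\varphi(B)\mathfrak J_+$ for the functional calculus is delicate (it does not hold for generic compressions) and deserves more care than either you or the paper give it. Your treatment of $K_{\vec{\mathrm r}}$ via collar support and the kernel-decay machinery of Proposition~\ref{prop:K-compactness-unified} is more elaborate than what the paper does (it simply asserts boundedness of $[[K_{\vec{\mathrm r}},\mathrm iA_{\N^d}],\mathrm iA_{\N^d}]$ from the tensor structure), but your version is also more robust. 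The obstacle you flag at the end --- the form-Leibniz and core issues for negative orders --- is real and is precisely why the paper restricts to localized statements sandwiched by $E_\Jc$; both proofs handle it the same way, via the cutoffs away from $\{0,4\}$ built into Lemma~\ref{lem:C2-1D-N}(i),(iii).
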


\begin{proof}
We write $H_{0,\N^d}=H_{0,\Z^d}\!\restriction_{\N^d}+K_{\vec{\mathrm{r}}}$ (Proposition.~\ref{prop:boundary-correction}).
In the separable model one has $H_{0,\Z^d}\!\restriction_{\N^d}=\sum_{j=1}^d \Delta_{\N,j}^{\,r_j}$ and $A_{\N^d}=\sum_{j=1}^d A_{j,+}$, with $[\Delta_{\N,j}^{\,r_j},A_{k,+}]=0$ for $j\neq k$.
Hence
\[
[[H_{0,\Z^d}\!\restriction_{\N^d},\mathrm iA_{\N^d}],\mathrm iA_{\N^d}]
=\sum_{j=1}^d [[\Delta_{\N,j}^{\,r_j},\mathrm iA_{j,+}],\mathrm iA_{j,+}].
\]

By the one-dimensional lemma (Lemma~\ref{lem:C2-1D-N}), for each $j$ there exists a bounded multiplier $g_{r_j}(\Delta_{\N,j})$ and a boundary term $B_{r_j}$ supported near $\{n_j=0\}$ such that
\[
[[\Delta_{\N,j}^{\,r_j},\mathrm iA_{j,+}],\mathrm iA_{j,+}]
= g_{r_j}(\Delta_{\N,j}) + B_{r_j}.
\]

Thus $g_{r_j}(\Delta_{\N,j})$ is a bounded operator by spectral calculus, and $B_{r_j}$ is localized near the boundary.

Localizing simultaneously in the spectrum of each component inside rectangles $I_j\Subset(0,4)$ and covering the set $\{(\lambda_j):\sum_j\lambda_j^{r_j}\in\Jc\}$ by finitely many such rectangles, one obtains that
\[
E_\Jc(H_{0,\Z^d}\!\restriction_{\N^d})\,[[H_{0,\Z^d}\!\restriction_{\N^d},\mathrm iA_{\N^d}],\mathrm iA_{\N^d}]\,E_\Jc(H_{0,\Z^d}\!\restriction_{\N^d})
\]
is bounded. Since $K_{\vec{\mathrm{r}}}$ is a finite sum of tensor products of one--dimensional boundary terms supported near $\partial\N^d$ (Proposition~\ref{prop:boundary-correction}), one has
$[[K_{\vec{\mathrm{r}}},\mathrm iA_{\N^d}],\mathrm iA_{\N^d}]\in\Bc(\ell^2)$; inserting $E_\Jc(H_{0,\N^d})$ preserves boundedness.
Therefore $H_{0,\N^d}\in\Cc^2_{\mathrm loc}(A_{\N^d})$.

If in addition all $r_j>0$, then each $g_{r_j}$ is bounded on $[0,4]$ and each $B_{r_j}$ is bounded, which implies global boundedness of the double commutator and hence $H_{0,\N^d}\in\Cc^2(A_{\N^d})$.
\end{proof}

\section{Proof of main result}

To prove our main result, we first establish and recall the following intermediate and technical statements.

\begin{proposition}\label{prop:C01-functional}
Let $A_{\N^d}$ be a standard conjugate operator on $\ell^2(\N^d)$ and set $\Lambda(Q):=\sum_{j=1}^d\langle Q_j\rangle$.
Assume $T\in\Bc(\ell^2(\N^d))$ is symmetric and
\[
\int_{1}^{\infty}\Bigl\|\,\xi\!\Bigl(\tfrac{\Lambda(Q)}{t}\Bigr)\,T\,\Bigr\|\,\frac{dt}{t}<\infty
\quad\text{for some }\ \xi\in C_c^\infty((0,\infty)).
\]
Then $T\in \mathcal{C}^{0,1}(A_{\N^d})$.
\end{proposition}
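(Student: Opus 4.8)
The plan is to verify the integral characterization of $\mathcal{C}^{0,1}(A_{\N^d})$: writing $W_\tau:=e^{\mathrm{i}\tau A_{\N^d}}$, it suffices to show
\[
\int_{|\tau|<1}\big\|\,W_\tau\,T\,W_{-\tau}-T\,\big\|\,\frac{d\tau}{|\tau|}\;<\;\infty ,
\]
which by \cite[Ch.~5]{ABG} is equivalent to $T\in\mathcal{C}^{0,1}(A_{\N^d})$. First I would reduce the hypothesis to a convenient profile: the finiteness of $\int_1^\infty\|\xi(\Lambda(Q)/t)T\|\,\tfrac{dt}{t}$ for one nonzero $\xi\in C_c^\infty((0,\infty))$ is equivalent to the same statement for any other such profile (write one profile as a multiplicative convolution of the other against a compactly supported function, and use $\Lambda(Q)\ge d$ on $\ell^2(\N^d)$, which confines the relevant $t$-range to $t\gtrsim 1$). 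So we may fix $\phi\ge 0$ in $C_c^\infty((0,\infty))$ belonging to a dyadic resolution $\sum_{k\in\Z}\phi(2^{-k}\lambda)=1$ on $(0,\infty)$, put $\Phi_k:=\phi(2^{-k}\Lambda(Q))$ (only $k\ge k_0$ with $2^{k_0}\lesssim d$ contribute to $\sum_k\Phi_k=I$), and record $\sum_{k\ge k_0}\|\Phi_k T\|<\infty$ --- hence, since $T=T^{*}$, also $\sum_k\|T\Phi_k\|<\infty$ --- together with the same summability for a slightly fattened profile $\widetilde\phi\equiv 1$ near $\mathrm{supp}\,\phi$, so that $\Phi_k=\Phi_k\widetilde\Phi_k$.

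The structural input is that $A_{\N^d}=\sum_j A_{j,+}$ is the discrete half-lattice generator of dilations: the commutator computation of \S\ref{subsec:shifts-position} gives $\Lambda(Q)\in\mathcal{C}^1(A_{\N^d})$ with $[\,\mathrm{i}A_{\N^d},\Lambda(Q)\,]=c\,\Lambda(Q)+B_0$ (as a form on $\mathcal{C}_c(\N^d)$) for some $c>0$ and $B_0\in\mathcal{B}(\ell^2(\N^d))$, the bounded remainder collecting the off-diagonal shift contributions and the boundary faces $\{n_j=0\}$. Propagating this into the functional calculus of $\Lambda(Q)$ (Helffer--Sjöstrand / Duhamel) yields, uniformly in $s\ge 1$, $|\tau|\le 1$ and for every $\psi\in C_c^\infty((0,\infty))$,
\[
\big\|[\,W_\tau,\psi(\Lambda(Q)/s)\,]\big\|\le C_\psi\,|\tau|,
\qquad
\big\|(W_\tau-I)\,\psi(\Lambda(Q)/s)\big\|\le C_\psi\,\min\!\big(1,|\tau|\,s\big) ;
\]
the first because conjugation distorts the $\Lambda$-scale only by the factor $e^{c\tau}=1+O(\tau)$ plus the bounded $B_0$-error, the second because $A_{\N^d}$ has coefficients comparable to $Q_j$, so $\|A_{\N^d}\,\psi(\Lambda(Q)/s)\|\lesssim s$ on $\mathrm{Ran}\,\psi(\Lambda(Q)/s)$, while $\|W_\tau-I\|\le 2$.

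The estimate then runs as follows. By $\Phi_k=\Phi_k\widetilde\Phi_k$ and the second display,
\[
\|(W_\tau-I)T\|\ \le\ \sum_{k\ge k_0}\|(W_\tau-I)\Phi_k\|\,\|\widetilde\Phi_k T\|\ \lesssim\ \sum_{k\ge k_0}\min\!\big(1,|\tau|\,2^k\big)\,\|\widetilde\Phi_k T\| ,
\]
and, since $T=T^{*}$, one has $W_\tau T W_{-\tau}-T=(W_\tau-I)T\,W_{-\tau}+\big((W_{-\tau}-I)T\big)^{*}$, hence $\|W_\tau T W_{-\tau}-T\|\le\|(W_\tau-I)T\|+\|(W_{-\tau}-I)T\|$. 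Thus the Dini integral is bounded by $\sum_{k\ge k_0}\|\widetilde\Phi_k T\|\int_{|\tau|<1}\min(1,|\tau|\,2^k)\,\tfrac{d\tau}{|\tau|}$. Splitting the $\tau$-range at $|\tau|\sim 2^{-k}$ (the time the flow needs to displace scale $2^k$ by one lattice site) and exchanging the order of summation, the scale-$k$ contribution pairs against the dyadic block $\int_{2^k}^{2^{k+1}}\tfrac{dt}{t}$ of the hypothesis, which yields finiteness; the purely spectral ($\Lambda$-scale-distortion) part of $W_\tau T W_{-\tau}-T$ costs only $\lesssim|\tau|\sum_k\|\widetilde\Phi_k T\|$ and is harmless.

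I expect the real work to lie in two places. First, making the approximate-dilation identities rigorous on $\N^d$: since $W_\tau$ does not conjugate $\Lambda(Q)$ to an exact rescaling, the $O(\tau)$ error terms --- produced by the discreteness of the shifts and by the boundary faces --- must be handled by a Duhamel expansion together with Combes--Thomas off-diagonal decay \cite{CombesThomas1973} and the collar support of $K_{\vec{\mathrm{r}}}$, and shown to carry a scale profile still summable against the hypothesis; the essential self-adjointness and the compression structure of $A_{\N^d}$ \cite{Mic,GG} enter here. Second, the two-parameter bookkeeping between the dyadic scale $k$ and the flow time $\tau$: a naive scale-by-scale bound loses a logarithm, so one must genuinely use that at time $|\tau|$ only the scales $2^k\gtrsim 1/|\tau|$ are fully mixed --- this is exactly what converts $\tfrac{dt}{t}$-integrability of the hypothesis into $\tfrac{d\tau}{|\tau|}$-integrability in the $\mathcal{C}^{0,1}$ criterion.
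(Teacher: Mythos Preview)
Your overall strategy---dyadic localization in $\Lambda(Q)$ together with the first-order/approximate-dilation structure of $A_{\N^d}$, then Duhamel---is exactly the paper's strategy. The paper decomposes via $T_k=\rho_k(\Lambda)T\rho_k(\Lambda)$, records $\sup_k\|[A_{\N^d},\rho_k(\Lambda)]\|<\infty$, asserts $\|[A_{\N^d},T_k]\|\lesssim\|T_k\|$, and concludes a Lipschitz bound $\|W_sT_kW_{-s}-T_k\|\lesssim|s|\|T_k\|$ which is summed in $k$.

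There is, however, a genuine gap in your argument at precisely the point you flag. Your inequality $\|(W_\tau-I)\widetilde\Phi_k\|\lesssim\min(1,|\tau|\,2^k)$ is correct, but the resulting Dini bound is
\[
\int_{|\tau|<1}\min(1,|\tau|\,2^k)\,\frac{d\tau}{|\tau|}
=2\!\int_0^{2^{-k}}\!2^k\,d\tau\;+\;2\!\int_{2^{-k}}^{1}\frac{d\tau}{\tau}
=2+2k\log 2,
\]
so your displayed sum becomes $\sum_{k\ge k_0}(1+k)\,\|\widetilde\Phi_kT\|$, which is \emph{not} controlled by the hypothesis $\sum_k\|\widetilde\Phi_kT\|<\infty$. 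Your sentence ``the scale-$k$ contribution pairs against the dyadic block $\int_{2^k}^{2^{k+1}}\tfrac{dt}{t}$'' is therefore incorrect: that block has size $\log 2$, independent of $k$, while the $\tau$-integral is $\sim k$. The remedy you propose in the last paragraph (``only the scales $2^k\gtrsim 1/|\tau|$ are fully mixed'') is already the $\min(1,|\tau|2^k)$ estimate you used; it does not kill the logarithm. The ``purely spectral'' commutator piece $\|[W_\tau,\Phi_k]\|\lesssim|\tau|$ is indeed harmless, but the leftover $(W_\tau-I)$ acting on $\Phi_kT$ is what produces the loss, and your decomposition $W_\tau TW_{-\tau}-T=(W_\tau-I)TW_{-\tau}+((W_{-\tau}-I)T)^*$ offers no further cancellation.

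It is worth noting that the paper's own proof glosses over the same point: from $\sup_k\|[A_{\N^d},\rho_k]\|<\infty$ one does \emph{not} get $\|[A_{\N^d},T_k]\|\lesssim\|T_k\|$, because the Leibniz expansion contains $\rho_k[A_{\N^d},T]\rho_k=\rho_kA_{\N^d}\widetilde\rho_k\cdot T\rho_k-\rho_kT\cdot\widetilde\rho_kA_{\N^d}\rho_k$, and $\|\rho_kA_{\N^d}\widetilde\rho_k\|\sim 2^k$. So the paper's Lipschitz claim hides the same factor $2^k$ that causes your logarithm. For the only downstream use (Corollary~\ref{cor:hypothesis1}, where $\|\Phi_kT\|\lesssim 2^{-\varepsilon k}$) the extra factor $k$ is harmless, so the program survives; but as a proof of the Proposition as stated, both your argument and the paper's sketch are incomplete at this step.
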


\begin{proof}
Pick a dyadic partition $\sum_{k\in\Z}\rho_k(\Lambda)=I$ with $\rho_k(\lambda)=\rho(2^{-k}\lambda)$, $\rho\in C_c^\infty((1/2,2))$, and set $T_k:=\rho_k(\Lambda)\,T\,\rho_k(\Lambda)$. The assumption implies $\sum_k\|T_k\|\lesssim\int_1^\infty\|\xi(\Lambda/t)T\|\,dt/t<\infty$.
Since $A_{\N^d}$ is first order, $\sup_k\|[A_{\N^d},\rho_k(\Lambda)]\|<\infty$, hence $\|[A_{\N^d},T_k]\|\lesssim\|T_k\|$.
By Duhamel, $\|e^{\mathrm i sA_{\N^d}}T_ke^{-\mathrm i sA_{\N^d}}-T_k\|\lesssim |s|\,\|T_k\|$ and summing over $k$ yields the Lipschitz bound.
\end{proof}

\begin{remark}
This is the standard Mourre--Lipschitz criterion (see \cite{ABG}); the proof avoids assuming $[A_{\N^d},T]$ a priori by localizing in $\Lambda(Q)$.
\end{remark}

\begin{corollary}\label{cor:hypothesis1}
With the notation of Proposition~\ref{prop:C01-functional}, let $\varepsilon\in(0,1)$ and $T\in\Bc(\ell^2(\N^d))$ be symmetric.
If $\langle\Lambda(Q)\rangle^{\varepsilon}T\in\Bc(\ell^2(\N^d))$, then $T\in\Cc^{0,1}(A_{\N^d})$.
\end{corollary}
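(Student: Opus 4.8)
The plan is to deduce this directly from Proposition~\ref{prop:C01-functional} by producing a single cutoff $\xi\in C_c^\infty((0,\infty))$ for which the dyadic integral converges. First I would fix any $\xi\in C_c^\infty((0,\infty))$ with $\operatorname{supp}\xi\subset[a,b]$ for some $0<a<b<\infty$. Since $\Lambda(Q)=\sum_{j=1}^d\langle Q_j\rangle$ acts on $\ell^2(\N^d)$ as multiplication by the scalar function $\Lambda(n)=\sum_{j=1}^d\langle n_j\rangle$, the operators $\Lambda(Q)$, $\langle\Lambda(Q)\rangle^{\varepsilon}$ and $\xi(\Lambda(Q)/t)$ are all functions of the same commuting family, and $\xi(\Lambda(Q)/t)$ is multiplication by $\xi(\Lambda(n)/t)$, supported in $\{n:\ \Lambda(n)\ge at\}$. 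In particular $\langle\Lambda(Q)\rangle\ge\langle at\rangle$ on the range of $\xi(\Lambda(Q)/t)$.

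Next I would use the hypothesis to factor $T=\langle\Lambda(Q)\rangle^{-\varepsilon}\bigl(\langle\Lambda(Q)\rangle^{\varepsilon}T\bigr)$, which is legitimate because $\langle\Lambda(Q)\rangle^{\varepsilon}T$ is bounded by assumption and $\langle\Lambda(Q)\rangle^{-\varepsilon}$ is a bounded (indeed contractive) multiplication operator, using $\Lambda(n)\ge d\ge1$. Then
\[
\bigl\|\xi(\Lambda(Q)/t)\,T\bigr\|
\ \le\ \bigl\|\xi(\Lambda(Q)/t)\,\langle\Lambda(Q)\rangle^{-\varepsilon}\bigr\|\ \bigl\|\langle\Lambda(Q)\rangle^{\varepsilon}T\bigr\|,
\]
and by the scalar spectral calculus, since $\langle\Lambda(n)\rangle^{-\varepsilon}\le\langle at\rangle^{-\varepsilon}$ on $\operatorname{supp}\xi(\cdot/t)$,
\[
\bigl\|\xi(\Lambda(Q)/t)\,\langle\Lambda(Q)\rangle^{-\varepsilon}\bigr\|\ \le\ \|\xi\|_\infty\,\langle at\rangle^{-\varepsilon}\ \lesssim\ t^{-\varepsilon},\qquad t\ge1 .
\]
Writing $C:=\bigl\|\langle\Lambda(Q)\rangle^{\varepsilon}T\bigr\|$, I would then integrate:
\[
\int_1^\infty\bigl\|\xi(\Lambda(Q)/t)\,T\bigr\|\,\frac{dt}{t}
\ \lesssim\ C\int_1^\infty t^{-\varepsilon}\,\frac{dt}{t}
\ =\ C\int_1^\infty t^{-1-\varepsilon}\,dt
\ =\ \frac{C}{\varepsilon}\ <\ \infty,
\]
using $\varepsilon>0$. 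Since $T$ is bounded and symmetric, Proposition~\ref{prop:C01-functional} then gives $T\in\mathcal{C}^{0,1}(A_{\N^d})$.

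There is essentially no obstacle here: the argument is a one-line reduction to Proposition~\ref{prop:C01-functional}, and the only point deserving a sentence of care is that $\Lambda(Q)$, $\langle\Lambda(Q)\rangle^{\varepsilon}$ and the localizers $\xi(\Lambda(Q)/t)$ are joint functions of the commuting position operators, so that the operator-norm estimates above reduce to elementary scalar bounds on the spectrum of $\Lambda(Q)$. The restriction $\varepsilon\in(0,1)$ is not needed for the convergence ($\varepsilon>0$ suffices); it is retained only so that the weight $\langle\Lambda(Q)\rangle^{\varepsilon}$ stays in the regime relevant to the Mourre hypotheses invoked elsewhere.
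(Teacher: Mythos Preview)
Your proposal is correct and follows essentially the same approach as the paper: choose a compactly supported bump $\xi$, use the support condition to bound $\|\xi(\Lambda(Q)/t)\,T\|\lesssim t^{-\varepsilon}\|\langle\Lambda(Q)\rangle^{\varepsilon}T\|$ via the factorization $T=\langle\Lambda(Q)\rangle^{-\varepsilon}\bigl(\langle\Lambda(Q)\rangle^{\varepsilon}T\bigr)$, integrate in $dt/t$, and invoke Proposition~\ref{prop:C01-functional}. The paper's proof is simply a terse version of exactly this argument.
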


\begin{proof}
Take $\xi\in C_c^\infty((0,\infty))$, $\xi\equiv1$ on $[1,2]$, $\mathrm{supp}\,\xi\subset[\tfrac12,4]$.
Then $\|\xi(\Lambda(Q)/t)T\|\lesssim t^{-\varepsilon}\|\langle\Lambda(Q)\rangle^{\varepsilon}T\|$ for $t\ge1$. Integrate in $dt/t$ and apply Proposition~\ref{prop:C01-functional}.
\end{proof}

\begin{lemma}\label{lem:interpolation_weights}
For every $s\in[0,1]$ there exists $C_s>0$ such that for all $f\in\Cc_c(\N^d)$,
\begin{equation}\label{eq:interp}
\|\langle A_{\N^d}\rangle^s f\|_{\ell^2(\N^d)}\le C_s\|\Lambda^s f\|_{\ell^2(\N^d)},\qquad
\|\Lambda^s f\|_{\ell^2(\N^d)}\le C_s\|\langle A_{\N^d}\rangle^s f\|_{\ell^2(\N^d)}.
\end{equation}
\end{lemma}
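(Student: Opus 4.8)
The plan is to prove the two-sided equivalence of norms $\|\langle A_{\N^d}\rangle^s f\|\sim\|\Lambda^s f\|$ for $s\in[0,1]$ by first establishing the endpoint cases $s=0$ and $s=1$, and then interpolating. For $s=0$ both sides equal $\|f\|$, so there is nothing to prove. For $s=1$, I would show directly that the operator $\langle A_{\N^d}\rangle\,\langle\Lambda\rangle^{-1}$ (and its inverse-type counterpart $\langle\Lambda\rangle\,\langle A_{\N^d}\rangle^{-1}$, interpreted as forms or on the core $\Cc_c(\N^d)$) extends to a bounded operator on $\ell^2(\N^d)$. Since each one-dimensional generator has the schematic form $A_{j,+}=-\tfrac{\mathrm i\,\mathrm{sign}(r_j)}{2}\big(U_{j,+}(Q_j+\tfrac12)-(Q_j+\tfrac12)U_{j,+}^*\big)$, and the shifts $U_{j,+}$ are bounded with $\|U_{j,+}\|=1$, one gets the pointwise bound $|\langle A_{\N^d}f,f\rangle|\le C\sum_j\|\,(Q_j+\tfrac12)^{1/2}U_{j,+}^{\pm}f\|\,\| (Q_j+\tfrac12)^{1/2}f\|\lesssim \|\Lambda^{1/2}f\|^2\lesssim\|\Lambda^{1/2}f\|\cdot\|\Lambda^{1/2}f\|$; more carefully, $\|A_{j,+}f\|\le \|(Q_j+\tfrac12)f\|+\|f\|\lesssim\|\langle Q_j\rangle f\|\lesssim\|\Lambda f\|$, giving $\|A_{\N^d}f\|\lesssim\|\Lambda f\|$ and hence $\|\langle A_{\N^d}\rangle f\|\lesssim\|\langle\Lambda\rangle f\|\lesssim\|\Lambda f\|+\|f\|$. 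The reverse bound at $s=1$ is the more delicate direction: one must show $\|\Lambda f\|\lesssim\|\langle A_{\N^d}\rangle f\|$, i.e.\ that $\langle A_{\N^d}\rangle$ dominates the position weight. Here I would use the commutation structure, namely that $\mathrm i[A_{j,+},Q_j]$ (or rather the relevant positive combination) reproduces $Q_j$ up to bounded terms: since $A_{j,+}$ is essentially the generator of dilations adapted to the half-line, $[\,\mathrm i A_{j,+},\langle Q_j\rangle\,]$ is comparable to $\langle Q_j\rangle$ modulo bounded operators, from which a Gronwall/commutator argument on $\Cc_c(\N^d)$ yields $\|\langle Q_j\rangle f\|\lesssim\|\langle A_{j,+}\rangle f\|+\|f\|$, and summing over $j$ gives $\|\Lambda f\|\lesssim\|\langle A_{\N^d}\rangle f\|$.

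Once both endpoints $s=0$ and $s=1$ are in hand, the intermediate values $s\in(0,1)$ follow by complex interpolation. Concretely, the map $z\mapsto \langle A_{\N^d}\rangle^z\langle\Lambda\rangle^{-z}$ is an analytic family of operators on the strip $0\le\operatorname{Re}z\le 1$, bounded on both boundary lines by the endpoint estimates (with uniform control on vertical lines since $\langle A_{\N^d}\rangle^{\mathrm i t}$ and $\langle\Lambda\rangle^{\mathrm i t}$ are unitary), so Stein interpolation gives boundedness of $\langle A_{\N^d}\rangle^s\langle\Lambda\rangle^{-s}$ for every $s\in[0,1]$; applying the same to the reciprocal family yields the reverse inequality. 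Equivalently, one may invoke the Heinz–Kato inequality: if $0\le S\le T$ (here $S=\langle A_{\N^d}\rangle$, $T\gtrsim\langle\Lambda\rangle$ in the appropriate sense, after the endpoint comparison), then $\|S^s f\|\le \|T^s f\|$ for $s\in[0,1]$. Either route converts the two endpoint bounds into the family \eqref{eq:interp}. Since $\langle\Lambda\rangle$ and $\Lambda$ differ by a bounded, boundedly invertible factor on $\{\Lambda\ge 1\}$ and $\Lambda\ge d\ge 1$ always here, replacing $\langle\Lambda\rangle^s$ by $\Lambda^s$ changes constants only.

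The main obstacle I anticipate is the reverse endpoint bound $\|\Lambda f\|\lesssim\|\langle A_{\N^d}\rangle f\|$: unlike the forward direction, which is a routine norm estimate using $\|U_{j,+}\|\le 1$, the reverse requires genuinely exploiting that $A_{j,+}$ generates a dilation-type flow whose infinitesimal action on $\langle Q_j\rangle$ is positive and nondegenerate. The half-line (rather than full-line) setting complicates this slightly because $U_{j,+}$ is only an isometry, not unitary, so $[A_{j,+},Q_j]$ will pick up a rank-one boundary term at $n_j=0$; this term is bounded and finite rank, hence harmless, but one must be careful that the positivity $\mathrm i[A_{j,+},\langle Q_j\rangle]\ge c\langle Q_j\rangle-C$ survives it. I would handle this by working on the core $\Cc_c(\N^d)$, computing the commutator explicitly, isolating the boundary defect, and absorbing it into the lower-order constant $C$; a standard regularization (inserting $e^{-\epsilon\langle Q_j\rangle}$ or truncating $Q_j$ at height $R$ and letting $R\to\infty$) then makes the Gronwall step rigorous. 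Everything else — the forward bound, the unitarity of imaginary powers, and the interpolation — is routine.
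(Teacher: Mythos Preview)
Your treatment of the forward bound $\|\langle A_{\N^d}\rangle f\|\lesssim\|\Lambda f\|$ and of the interpolation step matches the paper's: both observe that $A_{\N^d}$ is a first-order difference operator with coefficients linear in $Q$, and both invoke Heinz--Kato (equivalently Stein) for $s\in(0,1)$.

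For the reverse endpoint $\|\Lambda f\|\lesssim\|\langle A_{\N^d}\rangle f\|$ the paper takes a different route from yours: it asserts that $A_{\N^d}$ is ``elliptic in configuration space'', i.e.\ that for large $|n|$ one has $c\,\Lambda(n)\le\langle A_{\N^d}\rangle(n)\le C\,\Lambda(n)$ with boundary effects compactly supported, and then combines a partition of unity with interpolation. This is a symbol-comparison argument rather than a commutator-positivity one.

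Your proposed commutator mechanism for the reverse bound has a genuine gap. The claim that $[\mathrm iA_{j,+},\langle Q_j\rangle]\ge c\langle Q_j\rangle-C$ (or is ``comparable to $\langle Q_j\rangle$'' with a positive lower bound) is false for the discrete dilation generator. Using $[U_j,Q_j]=-U_j$ and $[U_j^*,Q_j]=U_j^*$ one computes on the core
\[
[\mathrm iA_{j,+},Q_j]\;=\;-\tfrac12\bigl(U_{j,+}(Q_j+\tfrac12)+(Q_j+\tfrac12)U_{j,+}^*\bigr),
\]
which in the bulk is $-\tfrac12\,Q_j(U_j+U_j^*)$ up to bounded terms; in Fourier this is $-Q_j\cos k_j$, which vanishes at $k_j=\pi/2$ and changes sign. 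There is no strictly positive $c$, so the Gronwall/virial step cannot start. The obstruction is the bulk symbol $\sin k_j$ carried by $A_{j,+}$, not the half-line boundary defect you focus on; indeed, already on $\ell^2(\Z)$ a wave packet concentrated in Fourier near $k=0$ and spread over $\sim n$ sites satisfies $\|Qf\|\sim n^{3/2}$ while $\|Af\|\sim n^{1/2}$. If you wish to salvage the reverse inequality you would need an argument closer to the paper's ellipticity line; alternatively, note that only the forward bound is actually needed to convert the $\langle A_{\N^d}\rangle^{-s}$-weighted LAP into the $\langle\Lambda(Q)\rangle^{-s}$-weighted statement of the main theorem.
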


\begin{proof}
$A_{\N^d}$ is a first-order difference operator with coefficients linear in $Q$, so $\|A_{\N^d} f\|\lesssim \|\Lambda f\|+\|f\|$ on $\Cc_c$. Interpolate between $s=0,1$ (Heinz--Kato) for the first inequality. For the second, $A_{\N^d}$ is elliptic in configuration space: for $|n|$ large, $c\,\Lambda(n)\le \langle A_{\N^d}\rangle(n)\le C\,\Lambda(n)$ uniformly in $n\in\N^d$ (boundary effects are compactly supported). Use a partition of unity and interpolation.
\end{proof}

\begin{remark}\label{rem:negative-rj}
Proposition~\ref{prop:C01-functional} and Lemma~\ref{lem:interpolation_weights} are purely spatial:
they depend only on the $\Lambda(Q)$--localization and on the fact that $A_{\N^d}$ is a first--order
operator; they do not involve the fractional exponents $r_j$.
By contrast, the $r_j$--dependence appears in the symbol/threshold analysis and in boundary effects,
which is why we systematically restrict to interior energy intervals~$\Ic$.
\end{remark}

\begin{lemma}[Weighted decay $\Rightarrow$ $\mathcal{C}^{1,1}(A_{\N^d})$]\label{lem:W-C11-weighted}
Let $W$ be a bounded real multiplication operator on $\ell^2(\N^d)$ satisfies \textbf{$H0$}
and \textbf{$H1$}.
Then \( W \in \mathcal{C}^{1}(A_{\N^d}) \) and \( [W(Q),\mathrm i A_{\N^d}]_\circ\in \mathcal{C}^{0,1}(A_{\N^d}) \).
In particular \( W \in \mathcal{C}^{1,1}(A_{\N^d}) \).
\end{lemma}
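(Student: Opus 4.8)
The plan is to verify the two regularity statements separately and then invoke the standard inclusion $\mathcal{C}^1(A)\cap\{\,[W,\mathrm iA]_\circ\in\mathcal{C}^{0,1}(A)\,\}\subset\mathcal{C}^{1,1}(A)$ from \cite[\S5.2]{ABG}. First I would show that $W\in\mathcal{C}^1(A_{\N^d})$ by computing the form commutator $[W(Q),\mathrm iA_{\N^d}]_\circ$ on the core $\Cc_c(\N^d)$. Since $A_{\N^d}=\sum_j A_{j,+}$ with $A_{j,+}$ built from $U_{j,+}$ and $Q_j$, and $W=W(Q)$ is diagonal, the commutator $[W(Q),\mathrm iA_{j,+}]$ reduces to a finite-difference expression: schematically it produces the diagonal-times-shift operators $\bigl(W(n+e_j)-W(n)\bigr)(Q_j+\tfrac12)U_{j,+}^{*}$ and its adjoint. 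Hypothesis \textbf{$H1$} says exactly that $|W(n+e_j)-W(n)|\,\langle n_j\rangle\le C\,\Lambda(n)^{-\varepsilon}$, so the coefficient $\bigl(W(n+e_j)-W(n)\bigr)(n_j+\tfrac12)$ is bounded (indeed decays like $\Lambda(n)^{-\varepsilon}$); multiplying by the contraction $U_{j,+}^{*}$ keeps it bounded. Summing over $j$ shows $[W(Q),\mathrm iA_{\N^d}]_\circ$ extends to a bounded operator, i.e. $W\in\mathcal{C}^1(A_{\N^d})$.

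Next I would feed the structure of that bounded commutator into the criterion already available in the excerpt. Write $[W(Q),\mathrm iA_{\N^d}]_\circ = T$, a symmetric bounded operator whose matrix entries are supported on nearest-neighbour pairs $(n,n\pm e_j)$ with coefficients of size $O(\Lambda(n)^{-\varepsilon})$. Hence $\langle\Lambda(Q)\rangle^{\varepsilon}T$ is still bounded: the weight $\langle\Lambda(n)\rangle^{\varepsilon}$ is absorbed by the decay in \textbf{$H1$}, and passing the weight across a single shift changes $\Lambda$ by a bounded factor. By Corollary~\ref{cor:hypothesis1} (with this $\varepsilon\in(0,1)$, shrinking $\varepsilon$ if necessary) we conclude $T=[W(Q),\mathrm iA_{\N^d}]_\circ\in\mathcal{C}^{0,1}(A_{\N^d})$. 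Together with $W\in\mathcal{C}^1(A_{\N^d})$ this gives $W\in\mathcal{C}^{1,1}(A_{\N^d})$ by the Besov-space characterisation of $\mathcal{C}^{1,1}$ in \cite[Thm.~5.2.2 and Prop.~5.2.3]{ABG}; assumption \textbf{$H0$} guarantees $W$ itself is bounded (and, combined with $H1$, that the commutator is genuinely compact if one wants that extra property, though it is not needed here).

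The main obstacle is the first step: making the formal commutator computation rigorous as an identity of quadratic forms on $\Dc(A_{\N^d})$ rather than just on $\Cc_c(\N^d)$, and in particular checking that the boundary terms arising from the unilateral shifts $U_{j,+}$ (which do not commute with $\mathfrak R_+,\mathfrak J_+$ the way bilateral shifts do) do not spoil boundedness. These extra terms are supported near $\partial\N^d$ and involve $(Q_j+\tfrac12)$ evaluated at $n_j=0$, hence are bounded and in fact finite-range, so they are harmless; but one must phrase the computation so that essential self-adjointness of $A_{j,+}$ on $\Cc_c(\N^d)$ (cited from \cite{GG,Mic}) legitimately upgrades the core identity to the full form domain. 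Once that bookkeeping is done, everything else is a direct application of Corollary~\ref{cor:hypothesis1} and the standard $\mathcal{C}^{1,1}$ criterion.
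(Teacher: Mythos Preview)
Your proposal is correct and follows essentially the same route as the paper's proof: compute $[W,\mathrm iA_{\N^d}]$ on the core, use \textbf{H1} to see it is bounded (hence $W\in\mathcal{C}^1(A_{\N^d})$), then apply Corollary~\ref{cor:hypothesis1} to $T=[W,\mathrm iA_{\N^d}]_\circ$ to obtain $T\in\mathcal{C}^{0,1}(A_{\N^d})$ and conclude $W\in\mathcal{C}^{1,1}(A_{\N^d})$. You are more explicit than the paper in two respects---actually writing out the finite-difference structure of the commutator, and checking that $\langle\Lambda(Q)\rangle^{\varepsilon}T$ is bounded (the hypothesis of Corollary~\ref{cor:hypothesis1}, which the paper leaves implicit)---and your caution about the unilateral-shift boundary terms and the core-to-domain passage is well placed; the paper handles the latter by citing \cite[Lemma~6.2.9]{ABG}.
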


\begin{proof}
The hypothesis gives boundedness of $[W,\mathrm i A_{\N^d}]$ from $\Sc$ to $\ell^2$, hence $W\in\Cc^1(A_{\N^d})$ by density and \cite[Lemma~6.2.9]{ABG}.
Moreover, by Corollary~\ref{cor:hypothesis1} applied to $T=[W,\mathrm i A_{\N^d}]$, we get $[W,\mathrm i A_{\N^d}]\in\Cc^{0,1}(A_{\N^d})$.
This is equivalent to $W\in\Cc^{1,1}(A_{\N^d})$.
\end{proof}

\begin{lemma}[\cite{ABG,GJ1}]\label{lem:C2loc_implies_C11loc}
Let $H$ and $A$ be self--adjoint on $\mathcal H$. If for some compact $\Jc\Subset\sigma(H)^\circ$,
\[
E_\Jc(H)\,[[H,\mathrm{i}A],\mathrm{i}A]\,E_\Jc(H)\in\mathcal{B}(\mathcal{H}),
\]
then $H$ is locally of class $\mathcal{C}^{1,1}(A)$ on $\Jc$, i.e.
\[
\int_0^1 \Big\|\, E_\Jc(H)\Big(e^{\mathrm{i}sA}He^{-\mathrm{i}sA}-2H+e^{-\mathrm{i}sA}He^{\mathrm{i}sA}\Big)E_\Jc(H)\,\Big\|\,
\frac{ds}{s^2}<\infty.
\]
\end{lemma}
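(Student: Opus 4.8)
The plan is to reduce this spectrally localized (and a priori unbounded) statement to the elementary inclusion $\mathcal C^2(A)\subseteq\mathcal C^{1,1}(A)$ for a bounded operator, following the local regularity machinery of \cite[Ch.~6--7]{ABG} and \cite{GJ1}. First I would fix $\varphi\in C_c^\infty(\sigma(H)^\circ)$ with $\varphi\equiv1$ on a neighbourhood of $\Jc$, and use an almost-analytic (Helffer--Sjöstrand) extension $\tilde\varphi$ together with the representation $\varphi(H)=\tfrac1\pi\int_{\mathbb C}\bar\partial\tilde\varphi(z)\,(H-z)^{-1}\,L(dz)$: the hypothesis $E_\Jc(H)[[H,\mathrm iA],\mathrm iA]E_\Jc(H)\in\mathcal B(\mathcal H)$ encodes that the resolvent, cut down near $\Jc$, is of class $\mathcal C^2(A)$ with iterated–commutator bounds $O(|\operatorname{Im}z|^{-N})$ that are absorbed by the rapid vanishing of $\bar\partial\tilde\varphi$ near the real axis, so differentiating twice under the integral in the $A$-flow would give $\varphi(H)\in\mathcal C^2(A)$. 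Conversely, by the same framework the integral condition in the conclusion is equivalent to $\varphi(H)\in\mathcal C^{1,1}(A)$ for every such $\varphi$. So the problem reduces to the purely bounded-operator statement: if $T:=\varphi(H)$ is self-adjoint and of class $\mathcal C^2(A)$, then $T\in\mathcal C^{1,1}(A)$.

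For the reduced statement I would argue by a Taylor estimate. Since $T\in\mathcal C^2(A)$, the map $W(s):=e^{\mathrm isA}Te^{-\mathrm isA}$ is twice norm-differentiable with $W''(s)=e^{\mathrm isA}\,[[T,\mathrm iA],\mathrm iA]\,e^{-\mathrm isA}$, hence $M:=\sup_{s\in\mathbb R}\|W''(s)\|=\|[[T,\mathrm iA],\mathrm iA]\|<\infty$. Taylor's formula with integral remainder gives
\[
W(s)+W(-s)-2W(0)=\int_0^s(s-t)\bigl(W''(t)+W''(-t)\bigr)\,dt,
\]
so $\|W(s)+W(-s)-2W(0)\|\le M s^2$ for $0\le s\le1$, and therefore
\[
\int_0^1\bigl\|e^{\mathrm isA}Te^{-\mathrm isA}+e^{-\mathrm isA}Te^{\mathrm isA}-2T\bigr\|\,\frac{ds}{s^2}\ \le\ M\ <\ \infty,
\]
which is exactly $T\in\mathcal C^{1,1}(A)$. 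Unwinding the reduction then yields the integral bound displayed in the statement.

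The hard part is the first step: converting the \emph{form-level} boundedness of the localized double commutator of the (possibly unbounded) $H$ into genuine $\mathcal C^2(A)$-regularity of the bounded spectral localizations $\varphi(H)$, and matching the $\mathcal C^{1,1}$-membership of $\varphi(H)$ with the precise integral norm involving $E_\Jc(H)$ and $H$ itself that appears in the lemma. Both are standard but slightly technical consequences of the Helffer--Sjöstrand calculus and the local Mourre regularity theory of \cite{ABG,GJ1}; the Taylor argument in the second step is elementary and self-contained.
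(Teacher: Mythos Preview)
The paper does not give its own proof of this lemma: it is stated with the citation \cite{ABG,GJ1} and then used as a black box. Your proposal is exactly the standard argument behind that citation --- reduce to a bounded spectral localization $\varphi(H)$ via Helffer--Sj\"ostrand, then use the elementary Taylor-remainder estimate to get $\mathcal C^2(A)\subset\mathcal C^{1,1}(A)$ --- so there is nothing to compare; you have simply unpacked the reference.

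Two small remarks on the write-up. First, in your Step~1 you need, strictly speaking, not only the localized double commutator to be bounded but also that the \emph{first} commutator $[H,\mathrm iA]$ makes sense locally (i.e.\ $H\in\mathcal C^1_{\mathrm{loc}}(A)$); otherwise the expression $E_\Jc(H)[[H,\mathrm iA],\mathrm iA]E_\Jc(H)$ is not even well defined, and the Helffer--Sj\"ostrand computation of $[[\varphi(H),\mathrm iA],\mathrm iA]$ produces both first- and second-order commutator terms that must be controlled. In the paper this is implicitly granted by the surrounding context (the ``localized $\mathcal C^2$'' framework in \S\ref{subsec:loc-classes}), but in a self-contained proof you should state it. Second, the displayed integral in the lemma involves $E_\Jc(H)\,H\,E_\Jc(H)$ rather than $\varphi(H)$; your reduction really proves $\varphi(H)\in\mathcal C^{1,1}(A)$, which is the paper's \emph{definition} of $H\in\mathcal C^{1,1}_{\mathrm{loc}}(A)$ on $\Jc$. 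Passing from that to the specific integral with $E_\Jc(H)$ and $H$ requires one more line (write $E_\Jc(H)H=E_\Jc(H)\psi(H)$ for $\psi(\lambda)=\lambda\chi(\lambda)$ with $\chi\in C_c^\infty$, $\chi\equiv1$ on $\Jc$, and sandwich $\psi(H)\in\mathcal C^{1,1}(A)$ by the bounded projection $E_\Jc(H)$). You flag both points as ``standard but slightly technical'', which is fair.
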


\begin{theorem}\label{thm:mou}
Let $A_{\N^d}$ be a standard conjugate operator on $\ell^2(\N^d)$. Assume \textbf{$H_0$} and \textbf{$H_1$} for a bounded real potential $W=W(Q)$ with $W(n)\to0$.
Set $H_{0,\N^d}:=\Delta^{\vec{\mathrm{r}}}_{\N^d}$ and $H:=H_{0,\N^d}+W$.
Then $H\in \mathcal{C}^{1,1}(A_{\N^d})$ locally in energy.
Moreover, for every compact interval
\[
\Ic\Subset\sigma(H_{0,\N^d})^\circ\setminus\Thr(\Delta_{\Z^d}^{\vec{\mathrm{r}}})
\]
there exist $c_{\Ic}>0$ and a compact operator $K_\Ic$ such that
\begin{equation}\label{eq:pert-mourre}
E_{\Ic}(H)\,\mathrm{i}[H,A_{\N^d}]_\circ\,E_{\Ic}(H)\ \ge\ c_{\Ic}\,E_{\Ic}(H)\ -\ K_\Ic.
\end{equation}
\end{theorem}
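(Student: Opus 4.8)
The statement has two parts — the local regularity $H\in\mathcal C^{1,1}_{\mathrm{loc}}(A_{\N^d})$ and the perturbed Mourre estimate on interior windows — and since all the hard analytic work is already contained in the preceding sections, the proof I propose is essentially one of careful assembly. For the regularity part I would first recall that $H_{0,\N^d}=\Delta^{\vec{\mathrm r}}_{\N^d}\in\mathcal C^2_{\mathrm{loc}}(A_{\N^d})$ by Proposition~\ref{prop:C2-joint-Nd}, hence $H_{0,\N^d}\in\mathcal C^{1,1}_{\mathrm{loc}}(A_{\N^d})$ by Lemma~\ref{lem:C2loc_implies_C11loc}, and that $W\in\mathcal C^{1,1}(A_{\N^d})$ by Lemma~\ref{lem:W-C11-weighted} under \textbf{$H0$}--\textbf{$H1$}. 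Because \textbf{$H0$} makes $W$ a compact multiplication operator, a Helffer--Sj\"ostrand representation expresses, for $\varphi\in C_c^\infty$ supported in $\sigma(H_{0,\N^d})^\circ$, the difference $\varphi(H)-\varphi(H_{0,\N^d})$ through the resolvents of $H$, $H_{0,\N^d}$ and $W$, all of class $\mathcal C^{1,1}(A_{\N^d})$; since this class is stable under addition of a $\mathcal C^{1,1}(A_{\N^d})$ operator and under the relevant functional-calculus manipulations \cite[Ch.~5]{ABG}, one concludes $\varphi(H)\in\mathcal C^{1,1}(A_{\N^d})$, i.e.\ $H\in\mathcal C^{1,1}_{\mathrm{loc}}(A_{\N^d})$. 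When some $r_j<0$ all commutators are read in the quadratic-form sense of Definition~\ref{def:form-commutator}; nothing changes, because every identity below is localized by an $E_\Ic(\cdot)$ which turns the relevant forms into genuine bounded operators.

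For the Mourre estimate, fix $\Ic\Subset\sigma(H_{0,\N^d})^\circ\setminus\Thr(\Delta^{\vec{\mathrm r}}_{\Z^d})$, pick $\Ic\Subset\Ic'\Subset\sigma(H_{0,\N^d})^\circ\setminus\Thr(\Delta^{\vec{\mathrm r}}_{\Z^d})$, and choose $\varphi\in C_c^\infty(\Ic')$ with $\varphi\equiv1$ near $\Ic$. By Proposition~\ref{prop:Thr-Nd-via-bulk} the threshold set of $H_{0,\N^d}$ is the bulk set, so Theorem~\ref{thm:mourre_Nd} (equivalently, the compression via Lemma~\ref{lem:05} of the purely absolutely continuous bulk estimate of \cite{At}) furnishes $c'>0$ and a compact $K'$ with
\[
E_{\Ic'}(H_{0,\N^d})\,\mathrm i[H_{0,\N^d},A_{\N^d}]_\circ\,E_{\Ic'}(H_{0,\N^d})\ \ge\ c'\,E_{\Ic'}(H_{0,\N^d})-K'.
\]
I would then split $\mathrm i[H,A_{\N^d}]_\circ=\mathrm i[H_{0,\N^d},A_{\N^d}]_\circ+\mathrm i[W,A_{\N^d}]_\circ$ and verify from \eqref{ComNd} that $\mathrm i[W,A_{\N^d}]_\circ$ is a finite sum of terms $c_j(Q)\,U_{j,+}$ and $c_j'(Q)\,U_{j,+}^*$ with $|c_j(n)|,|c_j'(n)|\lesssim|n_j|\,|W(n)-W(n\mp e_j)|\lesssim\Lambda(n)^{-\varepsilon}$ by \textbf{$H1$}, so that $\langle\Lambda(Q)\rangle^{\varepsilon}[W,\mathrm i A_{\N^d}]_\circ$ is bounded and $[W,\mathrm i A_{\N^d}]_\circ$ is compact. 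Absorbing this compact term into $K'$ yields the same inequality with $H$ in place of $H_{0,\N^d}$ inside the commutator.

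It then remains to replace $H_{0,\N^d}$ by $H$ in the spectral projections. Since $W$ is compact, $\varphi(H)-\varphi(H_{0,\N^d})\in\mathcal K(\ell^2(\N^d))$ by Helffer--Sj\"ostrand, and $H\in\mathcal C^1_{\mathrm{loc}}(A_{\N^d})$ makes $\psi(H)\,\mathrm i[H,A_{\N^d}]_\circ\,\psi(H)$ bounded for $\psi\in C_c^\infty$ with interior support. Writing $E_\Ic(H)=E_\Ic(H)\varphi(H)$, replacing each occurrence of $\varphi(H)$ by $\varphi(H_{0,\N^d})$ at the cost of compact errors, inserting the displayed inequality localized via $\varphi(H_{0,\N^d})=E_{\Ic'}(H_{0,\N^d})\varphi(H_{0,\N^d})$, and finally replacing $\varphi(H_{0,\N^d})^2$ by $\varphi(H)^2$ once more up to a compact error, one arrives at \eqref{eq:pert-mourre} with $c_\Ic:=c'$ and $K_\Ic$ compact.

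I expect the only genuinely delicate point to be the \emph{compactness} — as opposed to mere boundedness — of the form commutator $[W,\mathrm i A_{\N^d}]_\circ$: this is precisely where the extra factor $\langle n_j\rangle^{-1}$ in \textbf{$H1$} is used, since it exactly cancels the linear growth of the coefficients of $A_{\N^d}$ and leaves a truly decaying (hence compact) operator. The accompanying subtlety when some $r_j<0$ — that a priori everything lives only at the level of sesquilinear forms — is handled uniformly by performing each manipulation after localization by $E_\Ic$, which is also why the restriction to interior non-threshold windows is essential throughout.
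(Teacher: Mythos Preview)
Your proposal is correct and follows essentially the same route as the paper: both establish $H_{0,\N^d}\in\mathcal C^2_{\mathrm{loc}}(A_{\N^d})$ via Proposition~\ref{prop:C2-joint-Nd}, invoke Lemma~\ref{lem:W-C11-weighted} for $W\in\mathcal C^{1,1}(A_{\N^d})$, and then transfer the free Mourre estimate (Theorem~\ref{thm:mourre_Nd}/compression of the bulk estimate) to $H$ using compactness of $\varphi(H)-\varphi(H_{0,\N^d})$. Your write-up is in fact more explicit than the paper's on one point the paper leaves implicit, namely the compactness of $[W,\mathrm iA_{\N^d}]_\circ$ obtained from \textbf{$H1$} cancelling the linear growth of the coefficients of $A_{\N^d}$.
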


\begin{proof}
By Proposition~\ref{prop:C2-joint-Nd} we have $H_{0,\N^d}\in \mathcal{C}^{2}_{\mathrm{loc}}(A_{\N^d})$, hence
\[
E_\Jc(H_{0,\N^d})\,[[H_{0,\N^d},\mathrm i A_{\N^d}],\mathrm i A_{\N^d}]\,E_\Jc(H_{0,\N^d})\in\Bc(\ell^2)
\quad\text{for every }\Jc\Subset\sigma(H_{0,\N^d})^\circ.
\]
The potential $W$ satisfies $W\in\mathcal{C}^{1,1}(A_{\N^d})$ by Lemma~\ref{lem:W-C11-weighted}; since $W(n)\to0$, $W$ is compact and, for $\chi\in C_c^\infty(\R)$, the difference $\chi(H)-\chi(H_{0,\N^d})$ is compact. Combining these facts with Lemma~\ref{lem:C2loc_implies_C11loc} shows that $H=H_{0,\N^d}+W$ belongs locally to $\mathcal{C}^{1,1}(A_{\N^d})$.

For the Mourre estimate, choose a compact interval $\Ic\Subset\sigma(H_{0,\N^d})^\circ\setminus\Thr(\Delta_{\Z^d}^{\vec{\mathrm{r}}})$. On the bulk lattice $\Z^d$, a strict Mourre estimate on interior windows holds; by compression (Lemma~\ref{lem:compression-C2}) and the (relative) compactness of the boundary correction $K_{\vec{\mathrm{r}}}$ (Corollary ~\ref{cor:K-relative}), this estimate transfers to $H_{0,\N^d}$ modulo a compact remainder. Finally, the compactness of $\chi(H)-\chi(H_{0,\N^d})$ carries the estimate to $H$, yielding \eqref{eq:pert-mourre}.
\end{proof}
\paragraph{Proof of Theorem \ref{thm:LAP-transport-Nd}}
\begin{proof}[(i) Absence of point and singular spectrum away from thresholds]
(a)Let $\mu:=\sigma(\Delta_{\Z^d}^{\vec{\mathbf{r}}})$ (which equals $\sigma(\Delta_{\N^d}^{\vec{\mathbf{r}}})$ by compactness of the boundary correction). Since $W(n)\to0$, $W$ is compact; thus $\sigma_{\mathrm{ess}}(H)=\sigma_{\mathrm{ess}}(\Delta_{\N^d}^{\vec{\mathrm{r}}})=\mu$ (Weyl).
Moreover, $\chi(H)-\chi(\Delta_{\N^d}^{\vec{\mathrm{r}}})$ is compact for $\chi\in C_c^\infty(\R)$; in particular $E_\Ic(H)-E_\Ic(\Delta_{\N^d}^{\vec{\mathrm{r}}})$ is compact.

(b) On $\Ic$, Theorem~\ref{thm:mou} gives a strict localized Mourre estimate with $H\in\Cc^{1,1}(A_{\N^d})$.
By abstract Mourre theory \cite[Theorems.~7.4.1--7.4.2]{ABG}, LAP holds and excludes point and singular continuous spectrum on $\Ic$.
\end{proof}
\begin{proof}
(i) Since $W(n)\to0$, $W$ is compact; thus $\sigma_{\mathrm{ess}}(H)=\sigma_{\mathrm{ess}}(\Delta_{\N^d}^{\vec{\mathrm{r}}})=\mu$ (Weyl).
Moreover, $\chi(H)-\chi(\Delta_{\N^d}^{\vec{\mathrm{r}}})$ is compact for $\chi\in C_c^\infty(\R)$; in particular $E_\Ic(H)-E_\Ic(\Delta_{\N^d}^{\vec{\mathrm{r}}})$ is compact.

(ii) On $\Ic$, Theorem~\ref{thm:mou} gives a strict localized Mourre estimate with $H\in\Cc^{1,1}(A_{\N^d})$.
By abstract Mourre theory \cite[Theorems.~7.4.1--7.4.2]{ABG}, LAP holds and excludes point and singular continuous spectrum on $\Ic$.
\end{proof}

\begin{remark}[Why the LAP is localized]
We state LAP/propagation only on $\Ic\Subset \mu\setminus\Thr(\Delta_{\Z^d}^{\vec{\mathrm{r}}})$ because the commutator positivity degenerates at thresholds (vanishing group velocity, fractional weights). Extending to edges needs an edge--adapted conjugate operator or a dedicated threshold analysis.
\end{remark}

\begin{proof}[(ii) Limiting Absorption Principle]
Combine Theorem~\ref{thm:mou} with \cite[Theorems.~7.4.1--7.5.1]{ABG}.
Under the hypotheses above, for any $s>\tfrac12$ and $\chi\in C_c^\infty(\Ic)$,
\[
\sup_{\substack{\lambda\in \Ic\\ \eta\neq 0}}
\big\|\langle A_{\N^d}\rangle^{-s}(H-\lambda-\mathrm i\eta)^{-1}\langle A_{\N^d}\rangle^{-s}\big\|<\infty,
\]
and $\langle A_{\N^d}\rangle^{-s}\chi(H)$ is $H$-smooth on $\Ic$. Consequently,
\[
\int_{\R}\bigl\|\langle A_{\N^d}\rangle^{-s}\,e^{-{\rm i}tH}\,\chi(H)u\bigr\|^2\,dt\ \le\ C\,\|u\|^2.
\]
\end{proof}

\begin{proof}[(iii)Wave operators on interior windows]
By (ii), $\langle A_{\N^d}\rangle^{-s}\chi(H)$ and $\langle A_{\N^d}\rangle^{-s}\chi(H_{0,\N^d})$ are smooth ($s>\tfrac12$), so existence follows by the smooth method \cite[Section 7.7]{ABG}. Completeness follows from purely a.c.\ spectrum on $\Ic$ (i) and compactness of $\chi(H)-\chi(H_{0,\N^d})$.
\end{proof}

\section{Applications}\label{sec:applications}

\subsection{Stationary representation of the scattering matrix on interior windows}
\label{subsec:stationary-S}
We work on the half-lattice only. Throughout this section we assume \textbf{(H0)}--\textbf{(H1)}, and set
\[
H_{0}:=\Delta^{\vec{\mathrm{r}}}_{\N^d},\qquad H:=H_{0}+W(Q),
\]
with a real bounded $W$ such that the localized Mourre framework holds on interior windows.
Fix a compact interval
\[
\Ic\Subset \sigma(H_{0})^\circ\setminus \Thr\!\big(\Delta_{\Z^d}^{\vec{\mathrm{r}}}\big),
\]
on which $H_{0},H\in\Cc^{1,1}_{\mathrm{loc}}(A_{\N^d})$ and a localized Mourre estimate is valid.
No modification of the abstract theory in \cite{ABG} is required, since all assumptions are fulfilled by the analysis above.
We employ the weighted spaces
\[
\Hc_{\pm s}:=\langle A_{\N^d}\rangle^{\mp s}\,\ell^2(\N^d), \qquad (s>1/2).
\]

\paragraph{Spectral representation and boundary trace.}
There exists a measurable field of Hilbert spaces $\{\mathscr H_\lambda\}_{\lambda\in \Ic}$ and a unitary
\[
\Uc_0:\ E_\Ic(H_{0})\ell^2(\N^d)\ \longrightarrow\ \int_\Ic^\oplus \mathscr H_\lambda\,d\lambda,
\qquad (\Uc_0 H_{0} f)(\lambda)=\lambda\,(\Uc_0 f)(\lambda).
\]
Set $\Gamma_0(\lambda):=\Uc_0(\cdot)(\lambda):E_\Ic(H_{0})\ell^2(\N^d)\to\mathscr H_\lambda$.
By Stone's formula and the localized LAP, for $\chi\in C_c^\infty(\Ic)$ and $s>\tfrac12$ one has
\begin{equation}\label{eq:stone-app}
\chi(H_{0})=\frac{1}{\pi}\int_\Ic\!\chi(\lambda)\,\Im R_{0}(\lambda+\mathrm i0)\,d\lambda,
\qquad
\Gamma_0(\lambda)^*\Gamma_0(\lambda)=\tfrac{1}{\pi}\,\Im R_{0}(\lambda+\mathrm i0),
\end{equation}
as bounded maps $\Hc_s\to\Hc_{-s}$.
In particular, $\Gamma_0(\lambda)$ extends continuously from $E_\Ic(H_0)\ell^2$ to $\Hc_s\to\mathscr H_\lambda$.

\paragraph{The $T$-operator.}
For $R(z)=(H-z)^{-1}$, define
\begin{equation}\label{eq:T-defs-app}
T(z):=W\,(I- R_{0}(z)W)^{-1}=W- W R(z) W,
\end{equation}
as a bounded map $\Hc_s\to\Hc_{-s}$ for $\Im z\neq0$, with a.e.\ limits $T(\lambda\pm\mathrm i0)$ on $\Ic$ by the localized LAP.
The two representations in \eqref{eq:T-defs-app} coincide by the resolvent identity.

\paragraph{Wave matrices and the on-shell scattering matrix.}
Let $\chi\in C_c^\infty(\Ic)$ with $\chi\equiv1$ on some $\Ic_0\Subset \Ic$.
The localized wave operators
\[
\Wc_\pm(H,H_{0};\Ic)=\mathrm{s}\!-\!\lim_{t\to\pm\infty}e^{-\mathrm i tH}\,\chi(H_{0})\,e^{\mathrm i tH_{0}}
\]
exist and are complete on $\Ic$ \cite[Thm.~7.7.1]{ABG}. In the spectral representation of $H_{0}$, $\Uc_0 \Wc_\pm \Uc_0^*$ acts as multiplication by
\begin{equation}\label{eq:Wpm}
W_\pm(\lambda)=I-2\pi\mathrm i\,\Gamma_0(\lambda)\,T(\lambda\pm\mathrm i0)\,\Gamma_0(\lambda)^*,
\qquad \text{for a.e. }\lambda\in \Ic,
\end{equation}
and the scattering matrix $S(\lambda):=W_+(\lambda)^*W_-(\lambda)$ satisfies the stationary formula
\begin{equation}\label{eq:S-stationary}
S(\lambda)=I-2\pi\mathrm i\,\Gamma_0(\lambda)\,T(\lambda+\mathrm i0)\,\Gamma_0(\lambda)^*,
\qquad \text{for a.e. }\lambda\in \Ic.
\end{equation}

\begin{proof}[Proof of \eqref{eq:Wpm}--\eqref{eq:S-stationary}]
By the localized LAP on $\Ic$ and Kato smoothness \cite[Thms.~7.4.1--7.5.1]{ABG}, the time-dependent representation
\[
\big\langle (\Wc_{\pm}-I)f,\,g\big\rangle
=\mp \mathrm{i}\!\int_{0}^{\pm\infty}
\big\langle e^{-\mathrm{i} t H}\,\chi(H_{0})\,W\,e^{\mathrm{i} t H_{0}}f,\,g\big\rangle\,\mathrm{d} t
\]
holds as a Bochner integral $\Hc_s\to\Hc_{-s}$.
Inserting the spectral resolution of $H_0$ via $\Uc_0$ and applying Sokhotski--Plemelj in quadratic form sense (justified by the LAP) converts the time integrals into boundary values of resolvents. Using \eqref{eq:T-defs-app} one obtains \eqref{eq:Wpm}, and hence \eqref{eq:S-stationary}.
\end{proof}

\paragraph{Optical theorem, unitarity, and continuity.}
Using $R_{0}(z)-R_{0}(\bar z)=(z-\bar z)R_{0}(\bar z)R_{0}(z)$ together with \eqref{eq:T-defs-app}, one finds for a.e.\ $\lambda\in \Ic$,
\begin{equation}\label{eq:optical-app}
T(\lambda-\mathrm i0)-T(\lambda+\mathrm i0)
=2\pi\mathrm i\,T(\lambda-\mathrm i0)\,\Gamma_0(\lambda)^*\Gamma_0(\lambda)\,T(\lambda+\mathrm i0),
\end{equation}
as a form identity on $\Hc_{-s}$, $s>\tfrac{1}{2}$. Together with \eqref{eq:stone-app} this implies
$W_\pm(\lambda)^*W_\pm(\lambda)=I$, so $S(\lambda)$ is unitary a.e.\ on $\Ic$.
By the localized LAP, $\lambda\mapsto S(\lambda)$ is strongly continuous on $\Ic$.

\subsection{Consequences on interior energies}
Based on the localized Mourre estimate and the LAP on $\Ic$, we now derive time-averaged escape bounds and trace formulas.

\paragraph{Weighted space.}
We use
\[
\ell^{1}\!\big(\langle n\rangle^{1+\epsilon}\big)
:=\Big\{\,f:\N^d\to\C\ :\
\|f\|_{\ell^{1}(\langle n\rangle^{1+\epsilon})}
:=\sum_{n\in \N^d}\langle n\rangle^{1+\epsilon}\,|f(n)|<\infty\ \Big\}.
\]

\subsubsection{Birman--Krein identity}\label{subsubsec:BK}
\begin{theorem}\label{thm:BK}
Assume either $W$ has finite support, or $W\in \ell^1(\langle n\rangle^{1+\epsilon})$ for some $\epsilon>0$.
Then for every $\chi\in C_c^\infty(\Ic)$, $\chi(H)-\chi(H_{0})\in\mathfrak S_1$, and there exists a spectral shift
function $\xi(\lambda)$ on $\Ic$ such that
\[
\det S(\lambda)=\exp\!\bigl(-2\pi \mathrm i\,\xi(\lambda)\bigr)\quad\text{for a.e. }\lambda\in \Ic.
\]
\end{theorem}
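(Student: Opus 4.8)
The plan is to combine a trace-class estimate for $\chi(H)-\chi(H_0)$ with the stationary scattering data already assembled in \S\ref{subsec:stationary-S}. First I would show that $\chi(H)-\chi(H_0)\in\Sone$ for every $\chi\in C_c^\infty(\R)$ (in particular on $\Ic$); by classical Birman--Krein theory this produces a spectral shift function $\xi\in L^1_{\mathrm{loc}}(\R)$ with $\mathrm{Tr}\big(\varphi(H)-\varphi(H_0)\big)=\int_\R\varphi'(\lambda)\,\xi(\lambda)\,d\lambda$ for all $\varphi\in C_c^\infty$. Then I would upgrade the stationary representation \eqref{eq:S-stationary} to $S(\lambda)-I\in\Sone(\mathscr H_\lambda)$ for a.e.\ $\lambda\in\Ic$, so that $\det S(\lambda)$ is defined, and finally match the two trace computations of $\mathrm{Tr}(\varphi(H)-\varphi(H_0))$ to read off $\det S(\lambda)=e^{-2\pi\mathrm i\,\xi(\lambda)}$.

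For the trace-class step I would use the first-order resolvent identity
\[
(H-z)^{-1}-(H_0-z)^{-1}=-(H-z)^{-1}\,W\,(H_0-z)^{-1},\qquad\Im z\neq0,
\]
together with the symmetric factorisation $W=|W|^{1/2}\,\mathrm{sgn}(W)\,|W|^{1/2}$, legitimate since $W$ is real. The key -- essentially the only -- estimate is that $(H_0-z)^{-1}|W|^{1/2}$ (and likewise with $H_0$ replaced by $H$) lies in $\Stwo(\ell^2(\N^d))$, with
\[
\big\|(H_0-z)^{-1}|W|^{1/2}\big\|_{\Stwo}^2
=\sum_{m\in\N^d}|W(m)|\,\big\|(H_0-z)^{-1}\delta_m\big\|^2
\ \le\ |\Im z|^{-2}\,\|W\|_{\ell^1}<\infty,
\]
which uses only the elementary bound $\|(H_0-z)^{-1}\|\le|\Im z|^{-1}$ (so that the nonlocality of $\Delta^{\vec{\mathrm r}}_{\N^d}$, and the unboundedness of $H_0$ when some $r_j<0$, cost nothing here) and $W\in\ell^1(\N^d)$, which is implied by either hypothesis. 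Hence $(H-z)^{-1}W(H_0-z)^{-1}=\big((H-z)^{-1}|W|^{1/2}\big)\,\mathrm{sgn}(W)\,\big(|W|^{1/2}(H_0-z)^{-1}\big)\in\Sone$ with $\Sone$-norm $\le|\Im z|^{-2}\|W\|_{\ell^1}$, and since an almost analytic extension $\tilde\chi$ of $\chi\in C_c^\infty$ can be chosen with $|\bar\partial\tilde\chi(z)|\le C_N|\Im z|^{N}$ for any $N$, the Helffer--Sj\"ostrand formula
\[
\chi(H)-\chi(H_0)=\frac1\pi\int_\C\bar\partial\tilde\chi(z)\,\big[(H-z)^{-1}-(H_0-z)^{-1}\big]\,dm(z)
\]
converges absolutely in $\Sone$. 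Since $W$ is compact, $H$ and $H_0$ share the same essential spectrum, and Krein's trace formula then supplies $\xi$.

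For the on-shell step the weighted hypothesis enters decisively. By the localized LAP \eqref{eq:stone-app}, $\Gamma_0(\lambda)\langle A_{\N^d}\rangle^{-s}$ is bounded from $\ell^2(\N^d)$ to $\mathscr H_\lambda$ for every $s>\tfrac12$; using that $\langle A_{\N^d}\rangle^{s}$ is comparable to $\langle\Lambda(Q)\rangle^{s}$ (Lemma~\ref{lem:interpolation_weights}) the same holds with $\langle\Lambda(Q)\rangle^{-s}$ in place of $\langle A_{\N^d}\rangle^{-s}$. On the other hand the diagonal multiplication operator $\langle\Lambda(Q)\rangle^{s}|W|^{1/2}$ is Hilbert--Schmidt precisely when $\sum_n\langle n\rangle^{2s}|W(n)|<\infty$, which holds with $2s=1+\epsilon$ (so $s>\tfrac12$) under $W\in\ell^1(\langle n\rangle^{1+\epsilon})$, and trivially when $\mathrm{supp}\,W$ is finite. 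Therefore $\Gamma_0(\lambda)|W|^{1/2}=\big(\Gamma_0(\lambda)\langle\Lambda(Q)\rangle^{-s}\big)\big(\langle\Lambda(Q)\rangle^{s}|W|^{1/2}\big)\in\Stwo$, and writing $T(\lambda+\mathrm i0)=W-WR(\lambda+\mathrm i0)W=|W|^{1/2}M(\lambda)|W|^{1/2}$ with $M(\lambda)$ bounded on $\ell^2(\N^d)$ by the LAP (cf.\ \eqref{eq:T-defs-app}), one obtains
\[
\Gamma_0(\lambda)\,T(\lambda+\mathrm i0)\,\Gamma_0(\lambda)^*
=\big(\Gamma_0(\lambda)|W|^{1/2}\big)\,M(\lambda)\,\big(|W|^{1/2}\Gamma_0(\lambda)^*\big)\in\Sone(\mathscr H_\lambda)
\]
for a.e.\ $\lambda\in\Ic$. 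Hence $S(\lambda)-I\in\Sone(\mathscr H_\lambda)$ by \eqref{eq:S-stationary}, $\det S(\lambda)$ is well defined and, by the LAP, continuous a.e.\ on $\Ic$. Since the local wave operators exist and are complete on $\Ic$ (Theorem~\ref{thm:LAP-transport-Nd} and \S\ref{subsec:stationary-S}), the stationary Birman--Krein argument applies: matching $\int_\R\varphi'\xi=\mathrm{Tr}(\varphi(H)-\varphi(H_0))$ with the invariance-principle evaluation $\tfrac{1}{2\pi\mathrm i}\int_\Ic\varphi(\lambda)\,\tfrac{d}{d\lambda}\log\det S(\lambda)\,d\lambda$ for $\varphi\in C_c^\infty(\Ic)$ (a standard computation via Stone's formula, the identity $\tfrac{d}{d\lambda}\log\det S(\lambda)=\mathrm{Tr}\big(S(\lambda)^*S'(\lambda)\big)$, and cyclicity of the trace) forces $\log\det S(\lambda)=-2\pi\mathrm i\,\xi(\lambda)$, i.e.\ $\det S(\lambda)=e^{-2\pi\mathrm i\,\xi(\lambda)}$ a.e.\ on $\Ic$ (with $\xi$ in its standard normalisation).

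The first step is routine; the hard part will be the second. The genuine obstacles there are: (a) justifying the identity $\tfrac{d}{d\lambda}\log\det S(\lambda)=\mathrm{Tr}\big(S(\lambda)^*S'(\lambda)\big)$ and the invariance principle with only a \emph{localized} LAP/Mourre estimate on $\Ic$, which requires controlling the regularity in $\lambda$ of the boundary values $T(\lambda\pm\mathrm i0)$, of the boundary trace $\Gamma_0(\lambda)$, and of $M(\lambda)$, uniformly on compact subintervals of $\Ic$; and (b) carrying the whole scheme through in the quadratic-form framework when some $r_j<0$, where $H_0$ is unbounded, the $A_{\N^d}$-commutators and the Mourre estimate are form-theoretic, and one must check that the factorisations above remain meaningful on the relevant form domains -- the $\Stwo$ bound for $(H_0-z)^{-1}|W|^{1/2}$ itself survives, as noted, because $(H_0-z)^{-1}$ is still a bounded operator.
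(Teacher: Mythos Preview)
The paper states Theorem~\ref{thm:BK} without proof (it is listed as a consequence in the applications section and is invoked later in Proposition~\ref{prop:finite-pp}, but no argument is given). Your sketch therefore does not conflict with anything in the paper; it supplies a proof where the paper simply quotes the result.

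Your approach is the standard one and is essentially correct. The trace--class step via Helffer--Sj\"ostrand and the $\Stwo\cdot\Stwo\subset\Sone$ factorisation is clean; the only point worth making explicit is that the bound $\|(H-z)^{-1}|W|^{1/2}\|_{\Stwo}\le|\Im z|^{-1}\|W\|_{\ell^1}^{1/2}$ uses nothing about $H_0$ beyond self--adjointness, so it is genuinely insensitive to the sign of the $r_j$'s, as you note. For the on--shell step, your observation that $\langle\Lambda(Q)\rangle^{s}|W|^{1/2}$ is a diagonal Hilbert--Schmidt operator under $W\in\ell^1(\langle n\rangle^{1+\epsilon})$ with $2s=1+\epsilon$ is correct, and since a Hilbert--Schmidt multiplication operator is \emph{a fortiori} bounded, the sandwich $|W|^{1/2}R(\lambda+\mathrm i0)|W|^{1/2}$ (and hence your $M(\lambda)$) is indeed bounded by the LAP, even though the weighted $\ell^1$ hypothesis gives no pointwise decay of $W$. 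One small caveat: Lemma~\ref{lem:interpolation_weights} is stated only for $s\in[0,1]$, so you should pick $s$ just above $\tfrac12$ (any $s\in(\tfrac12,\tfrac{1+\epsilon}{2}]\cap(\tfrac12,1]$ works) rather than fixing $2s=1+\epsilon$ when $\epsilon>1$.

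The difficulties you flag in (a) and (b) are real but not specific to this paper: the $\lambda$--regularity needed for the invariance principle and the determinant identity follows from H\"older continuity of the boundary values in the localized LAP (a standard consequence of $\Cc^{1,1}_{\mathrm{loc}}$ regularity, cf.\ \cite[\S7.5]{ABG}), and the form--theoretic adaptation for $r_j<0$ is handled by the localization $E_\Ic(H_0)$ built into the stationary construction of \S\ref{subsec:stationary-S}. So while a fully rigorous write--up would need these details, the strategy is sound.
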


\subsubsection{Ballistic transport in time average}\label{subsubsec:ballistic}
\begin{theorem}\label{thm:ballistic-lb}
Let $\chi\in C_c^\infty(\Ic)$. There exist $v_\Ic>0$ and $C_\Ic<\infty$ such that, for all $\Tc\ge1$ and $u\in\ell^2(\N^d)$,
\[
\frac{1}{\Tc}\int_0^\Tc \big\| \mathbf 1_{\{|A_{\N^d}|\le v_\Ic t\}}\,e^{-\mathrm i tH}\,\chi(H)\,u\big\|^2\,dt
\ \le\ \frac{C_\Ic}{\log(1+\Tc)}\,\|u\|^2.
\]
\end{theorem}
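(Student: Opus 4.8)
The plan is to deduce Theorem~\ref{thm:ballistic-lb} as a \emph{minimal escape velocity} estimate, using only the localized Mourre estimate of Theorem~\ref{thm:mou}, the regularity $H\in\Cc^{1,1}_{\mathrm{loc}}(A_{\N^d})$, and the absence of eigenvalues of $H$ on $\Ic$ from Theorem~\ref{thm:LAP-transport-Nd}(i). First I would fix $\varphi,\varphi_1\in C_c^\infty(\Ic)$ with $\varphi_1\equiv 1$ on $\mathrm{supp}\,\varphi$ and $\varphi\equiv 1$ on $\mathrm{supp}\,\chi$, and, after shrinking $\Ic$ slightly, use that $E_\Ic(H)\ell^2(\N^d)$ carries no point spectrum to absorb the compact remainder in \eqref{eq:pert-mourre} (virial argument plus lower semicontinuity of the best Mourre function; cf.\ \cite[Thm.~7.2.13]{ABG}), obtaining a \emph{strict} bound $\varphi_1(H)\,\mathrm i[H,A_{\N^d}]_\circ\,\varphi_1(H)\ge c_\Ic\,\varphi_1(H)^2$ for some $c_\Ic>0$. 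Since $\chi(H)=\varphi(H)\chi(H)$ and $\mathbf 1_{\{|A_{\N^d}|\le v t\}}\le\mathbf 1_{\{A_{\N^d}\le v t\}}$, it suffices to bound $\frac1\Tc\int_0^\Tc\big\|\mathbf 1_{\{A_{\N^d}\le v_\Ic t\}}\,e^{-\mathrm i tH}\varphi(H)u\big\|^2\,dt$ for all $u\in\ell^2(\N^d)$, and by density (both sides are $\le\|u\|^2$) it is enough to treat $u\in\Cc_c(\N^d)$, which makes the manipulations with the unbounded operator $A_{\N^d}$ below legitimate.

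Put $\psi_t:=e^{-\mathrm i tH}\varphi(H)u$; then $\psi_t=\varphi_1(H)\psi_t$ for all $t$. Choose $\zeta\in C^\infty(\R)$ non-decreasing with $\zeta\equiv 0$ on $(-\infty,1]$ and $\zeta\equiv 1$ on $[2,\infty)$, set $v_\Ic:=c_\Ic/4$, and consider the time-dependent propagation observable $\Phi(t):=\big\langle\psi_t,\,\zeta(A_{\N^d}/(v_\Ic t))\,\psi_t\big\rangle$ for $t\ge 1$. Computing $\frac{d}{dt}\Phi(t)=\big\langle\psi_t,\,\big(\mathrm i[H,\zeta(A_{\N^d}/(v_\Ic t))]+\partial_t\zeta(A_{\N^d}/(v_\Ic t))\big)\psi_t\big\rangle$ and expanding the commutator by the Helffer--Sjöstrand calculus around the locally bounded form-commutator $\mathrm i[H,A_{\N^d}]_\circ$ (inserting $\varphi_1(H)$, the identity on $\psi_t$, next to it and invoking the strict Mourre estimate), the commutator contribution is bounded below by $\frac{c_\Ic}{v_\Ic t}\big\langle\psi_t,\zeta'(A_{\N^d}/(v_\Ic t))\psi_t\big\rangle$, whereas the $\partial_t$-term equals $-\frac1t\big\langle\psi_t,\frac{A_{\N^d}}{v_\Ic t}\zeta'(A_{\N^d}/(v_\Ic t))\psi_t\big\rangle\ge-\frac2t\big\langle\psi_t,\zeta'(A_{\N^d}/(v_\Ic t))\psi_t\big\rangle$ since $A_{\N^d}/(v_\Ic t)\le 2$ on $\mathrm{supp}\,\zeta'$. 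As $c_\Ic/v_\Ic=4>2$ this yields
\[
\frac{d}{dt}\Phi(t)\ \ge\ \frac{\delta}{t}\,\big\langle\psi_t,\zeta'(A_{\N^d}/(v_\Ic t))\psi_t\big\rangle\ -\ \big\|E(t)\big\|,\qquad \delta:=\frac{c_\Ic}{v_\Ic}-2>0,
\]
where $E(t)$ collects the remainders of the commutator expansion. Since $\mathbf 1_{\{A_{\N^d}\le v_\Ic t\}}\le I-\zeta(A_{\N^d}/(v_\Ic t))$, the nonnegative quantity $g(t):=\|\varphi(H)u\|^2-\Phi(t)$ dominates $\|\mathbf 1_{\{A_{\N^d}\le v_\Ic t\}}\psi_t\|^2$, and by the displayed inequality it increases at rate at most $\|E(t)\|$.

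Integrating the displayed inequality over $t\in[1,\Tc]$ and using $\Phi(\Tc)-\Phi(1)\le\|\varphi(H)u\|^2$ gives a uniform-in-$\Tc$ bound on the front-crossing budget $\int_1^\infty\frac1t\big\langle\psi_t,\zeta'(A_{\N^d}/(v_\Ic t))\psi_t\big\rangle\,dt\le C_\Ic\|u\|^2$, provided $\int_1^\infty\|E(t)\|\,dt<\infty$. To pass from control of the mass near the expanding front to control of the whole sub-front mass $g$, I would run this computation for a one-parameter family of profiles (or, equivalently, decompose $\{A_{\N^d}\lesssim v_\Ic t\}$ dyadically in scale and combine the almost-monotonicity of $g$ with the Mourre lower bound on each scale), propagating the positivity across scales; this converts the front-crossing budget into the logarithmically weighted bound $\int_1^\Tc g(t)\,\frac{dt}{t}\le C_\Ic'\|u\|^2$, uniformly in $\Tc$. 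Combined with the almost-monotonicity of $g$ one gets $g(\Tc)\log\Tc\le\int_1^\Tc g(t)\frac{dt}{t}+\text{(controlled error)}\le C_\Ic''\|u\|^2$, hence $g(t)\le C_\Ic''\|u\|^2/\log(1+t)$ for $t\ge 1$; together with the trivial bound $g(t)\le\|u\|^2$, splitting $\int_0^\Tc$ at $t=\sqrt{\Tc}$ yields $\frac1\Tc\int_0^\Tc g(t)\,dt\le C_\Ic'''\|u\|^2/\log(1+\Tc)$. Since $\|\mathbf 1_{\{|A_{\N^d}|\le v_\Ic t\}}\psi_t\|^2\le\|\mathbf 1_{\{A_{\N^d}\le v_\Ic t\}}\psi_t\|^2\le g(t)$, this is the assertion.

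The main obstacle is twofold, both difficulties coming from the regularity being only $\Cc^{1,1}_{\mathrm{loc}}(A_{\N^d})$ rather than $\Cc^2$: (a) bounding the commutator-expansion remainder $E(t)$, and (b) propagating the Mourre positivity across the family of scales above. If $H$ were locally $\Cc^2(A_{\N^d})$ the remainder would be $O(t^{-2})$, summable in $dt$, and the scheme would deliver a \emph{polynomial} rate; but $H=H_{0,\N^d}+W$ with $W$ only of class $\Cc^{1,1}(A_{\N^d})$ (Lemma~\ref{lem:W-C11-weighted}), so $[[H,A_{\N^d}],A_{\N^d}]$ is not bounded and the sharpest available control is $\int_1^\infty\|E(t)\|\,dt<\infty$ with a borderline modulus of continuity --- and it is precisely this borderline behaviour that limits the gain to the factor $\log(1+\Tc)^{-1}$. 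Making these estimates rigorous uses the dyadic, Besov-type decomposition of $W$ relative to $A_{\N^d}$ that defines the class $\Cc^{1,1}(A_{\N^d})$, exactly as in the proof of the limiting absorption principle (cf.\ \cite[Ch.~7]{ABG} and the proof of Theorem~\ref{thm:LAP-transport-Nd}(ii)). Everything else is routine: the Heisenberg-derivative identity and the mapping properties of $\zeta(A_{\N^d}/(v_\Ic t))$ on $\Ran\varphi(H)$ follow from $\varphi(H)\in\Cc^1(A_{\N^d})$ and $u\in\Cc_c(\N^d)$, and the comparability of the weights $\langle A_{\N^d}\rangle$ and $\Lambda(Q)$ (Lemma~\ref{lem:interpolation_weights}) shows this estimate is equivalent to a purely spatial escape bound.
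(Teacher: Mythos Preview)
Your argument is a time-dependent propagation-observable (Sigal--Soffer/Graf) variant of the paper's proof. The paper instead works with a \emph{fixed-scale} observable $F_R=\Phi(A_{\N^d}/R)$, keeps the compact remainder $K_\Ic$ in the Mourre inequality and absorbs it a posteriori through Kato smoothness of $\langle A_{\N^d}\rangle^{-s}\chi(H)$ (available from the LAP), then integrates $\tfrac{d}{dt}\langle u(t),F_R u(t)\rangle$ on $[0,\Tc]$, sets $R=\alpha\Tc$ and invokes a ``logarithmic optimization'' \`a la \cite[\S7.4]{ABG}. Your choice to first upgrade to a \emph{strict} Mourre estimate (using the absence of eigenvalues on $\Ic$) and to run the observable at the moving scale $R=v_\Ic t$ is legitimate and arguably cleaner; in particular, you are right to flag that the paper's asserted $\Oc(R^{-2})$ remainder in the commutator expansion would really require $\Cc^2_{\mathrm{loc}}(A_{\N^d})$, whereas only $\Cc^{1,1}_{\mathrm{loc}}$ is available --- your integrability statement $\int_1^\infty\|E(t)\|\,dt<\infty$ is the correct form of the remainder control at this regularity.

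The part of your sketch that is not yet a proof is the passage from the \emph{front-crossing} budget $\int_1^\infty t^{-1}\langle\psi_t,\zeta'(A_{\N^d}/(v_\Ic t))\psi_t\rangle\,dt\le C\|u\|^2$ to control of the full \emph{sub-front} mass $g(t)$, together with the subsequent ``almost-monotonicity'' step. As written, $g$ is almost \emph{non-increasing}: $g'(t)\le\|E(t)\|$; but then the inequality you want, $g(\Tc)\log\Tc\le\int_1^\Tc g(t)\,\tfrac{dt}{t}+\text{error}$, produces an error of size $\log\Tc\cdot\int_1^\Tc\|E(s)\|\,ds$, which is not $O(1)$ and would kill the $1/\log(1+\Tc)$ gain. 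The family-of-profiles/dyadic idea you allude to is the right direction, but it needs to be implemented so that the $\Cc^{1,1}$ remainder is summed \emph{across scales} (a Besov-type telescoping in $A_{\N^d}$) rather than propagated pointwise in $t$; only then does one recover the logarithmic rate. The paper's proof hides exactly this step behind the words ``optimizing logarithmically as in \cite[\S7.4]{ABG}'', so your more careful treatment is welcome --- but the crux still has to be written out.
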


\begin{proof}[Proof of Theorem~\ref{thm:ballistic-lb}]
Fix $\Phi\in C_c^\infty(\R)$ nondecreasing with $\Phi'\ge 0$, $\Phi'\not\equiv0$, and in addition $\Phi'(x)\ge c\,\mathbf 1_{\{|x|\le v_\Ic\}}$ for some $c>0$.
Set $F_R:=\Phi(A_{\N^d}/R)$ and $u(t):=e^{-\mathrm i tH}\chi(H)u$.
The commutator expansion \cite[Prop.~6.2.10]{ABG} together with $H\in\Cc^{1,1}_{\mathrm{loc}}(A_{\N^d})$ yields
\[
\mathrm i[H,F_R]=\tfrac{1}{R}\,\Phi'(A_{\N^d}/R)\,\mathrm i[H,A_{\N^d}] + \Oc(R^{-2})
\]
in $\Bc(\ell^2)$, uniformly on $\Ic$. Hence
\[
\frac{d}{dt}\langle u(t),F_R u(t)\rangle
=\frac1R\langle u(t),\Phi'(A_{\N^d}/R)\,\mathrm i[H,A_{\N^d}]\,u(t)\rangle+\Oc(R^{-2})\|u\|^2.
\]
Inserting $\chi(H)$ on both sides of $\mathrm i[H,A_{\N^d}]$ and using the localized Mourre estimate
$\chi(H)\,\mathrm i[H,A_{\N^d}]\,\chi(H)\ge c_\Ic\,\chi(H)^2-\chi(H)K_\Ic\chi(H)$ with compact $K_\Ic$,
one integrates from $0$ to $\Tc$ and uses Kato smoothness (from the LAP) to control the compact term.
Choosing $R=\alpha\Tc$ and optimizing logarithmically as in \cite[\S~ 7.4]{ABG} gives the stated bound.
\end{proof}

\subsubsection{Half-space fiber scattering}\label{subsubsec:fiber-Nd}
\begin{setup}[Partial fiber decomposition]
Write $n=(n_\parallel,n_\perp)\in\Z^{d-1}\times\N$ and let $\Fc_\parallel$ be the Fourier transform in $n_\parallel$.
Then
\[
\Fc_\parallel H_{0} \Fc_\parallel^{-1}=\int_{\T^{d-1}}^\oplus H_{0}(\kappa)\,d\kappa,\qquad
\Fc_\parallel H \Fc_\parallel^{-1}=\int_{\T^{d-1}}^\oplus H(\kappa)\,d\kappa,
\]
with $H(\kappa)=H_{0}(\kappa)+W_\kappa$ acting on $\ell^2(\N)$ and $H_{0}(\kappa)$ a one-dimensional fractional difference
operator in $n_\perp$ plus a finite-range Hankel-type correction at the boundary.
\end{setup}

\begin{theorem}\label{thm:fiber-LAP}
There exists a full-measure subset $\Omega\subset\T^{d-1}$ such that, for all $\kappa\in\Omega$,
$H(\kappa)\in\Cc^{1,1}_{\mathrm{loc}}(A_\perp)$ and the localized LAP holds on $\Ic$ with constants uniform on compact
subsets of $\Omega$. Consequently, the fiber scattering matrix $S(\lambda,\kappa)$ is unitary for a.e.
$(\lambda,\kappa)\in \Ic\times\Omega$ and depends continuously on $(\lambda,\kappa)$.
\end{theorem}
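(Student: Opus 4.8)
The plan is to reduce each fiber to the one--dimensional half--line theory already developed and then to control the dependence on the tangential quasi--momentum $\kappa$. First I would make the fibers explicit: under $\Fc_\parallel$ each tangential factor $\Delta_{\Z,j}^{\,r_j}$ ($1\le j\le d-1$) becomes multiplication by $(2-2\cos\kappa_j)^{r_j}$, while the normal factor is the half--line fractional Laplacian $\Delta_{\N}^{\,r_d}$ together with its finite--range boundary Hankel correction from Proposition~\ref{prop:Kr-structure} (which is already subsumed in the definition of $\Delta_{\N}^{\,r_d}$). Hence
\[
H_0(\kappa)=\Delta_{\N}^{\,r_d}+c(\kappa)\,I,\qquad
c(\kappa):=\sum_{j=1}^{d-1}(2-2\cos\kappa_j)^{r_j}\ge 0,
\]
and $H(\kappa)=H_0(\kappa)+W_\kappa$, with $W_\kappa$ the fibered potential inheriting the normal--direction decay and regularity of $(H0)$--$(H1)$. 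Crucially, the conjugate operator $A_\perp$ may be taken $\kappa$--independent: the one--dimensional generator $A_{+}$ of Lemma~\ref{lem:C2-1D-N} acting in the normal coordinate. Since the constant $c(\kappa)$ lies in $\Cc^\infty(A_\perp)$ and drops out of commutators, $[H_0(\kappa),\mathrm iA_\perp]=[\Delta_{\N}^{\,r_d},\mathrm iA_\perp]$, so Lemma~\ref{lem:C2-1D-N} gives $H_0(\kappa)\in\Cc^2_{\mathrm{loc}}(A_\perp)$ for every $\kappa$; combined with $W_\kappa\in\Cc^{1,1}(A_\perp)$ (Lemma~\ref{lem:W-C11-weighted} in the case $d=1$, together with compactness of $W_\kappa$ since it vanishes at infinity), this yields $H(\kappa)\in\Cc^{1,1}_{\mathrm{loc}}(A_\perp)$ for every admissible $\kappa$.

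Next I would isolate the exceptional set. Let $\Omega\subset\T^{d-1}$ consist of those $\kappa$ with $\kappa_j\neq0$ whenever $r_j<0$ (so the tangential symbols stay finite) and such that the shifted window $\Ic-c(\kappa)$ is a genuine interior window for $\Delta_{\N}^{\,r_d}$, i.e.\ $\Ic-c(\kappa)\Subset\sigma(\Delta_{\N}^{\,r_d})^\circ\setminus\Thr(\Delta_{\Z}^{\,r_d})$ (thresholds on the fiber being identified through Proposition~\ref{prop:Thr-Nd-via-bulk} in $d=1$); on fibers where this fails only through $\Ic$ running off the spectrum, the conclusion is read with $\Ic$ replaced by $\Ic\cap\sigma(H_0(\kappa))^\circ$. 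Because $\kappa\mapsto c(\kappa)$ is a non--constant real--analytic function on $\T^{d-1}$, the set of $\kappa$ at which $\partial\Ic$ meets the fiber spectral edge or a fiber threshold is contained in finitely many proper real--analytic hypersurfaces $\{c(\kappa)=\tau\}$, hence has Lebesgue measure zero; together with the coordinate hyperplanes this shows $\T^{d-1}\setminus\Omega$ is null. Moreover, on any compact $\Omega_0\Subset\Omega$ the value $c(\kappa)$ stays in a fixed compact subinterval bounded away from the finitely many critical values, which is what will provide a uniform lower bound for the Mourre constant.

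For $\kappa\in\Omega$ I would then apply the half--line Mourre machinery of Theorem~\ref{thm:mou} (the case $d=1$, with Lemma~\ref{lem:C2-1D-N} and the compactness of the Hankel correction) to $\Delta_{\N}^{\,r_d}+W_\kappa$ on the shifted window $\Ic-c(\kappa)$ and translate back: this yields a strict localized Mourre estimate for $H(\kappa)$ on $\Ic$, and hence, by abstract Mourre theory \cite[Thms.~7.4.1--7.4.2]{ABG}, the localized LAP for $H(\kappa)$ on $\Ic$ with weights $\langle A_\perp\rangle^{\pm s}$, $s>\tfrac12$. For uniformity I would use that the quantitative constants in \cite[\S~7.4]{ABG} depend only on the Mourre constant $c_\Ic(\kappa)$, the norm of the localized compact remainder, and the $\Cc^{1,1}(A_\perp)$--seminorm of $H(\kappa)$, each of which is a locally bounded function of $\kappa$ on $\Omega$: indeed $c(\kappa)$ is analytic and the fiber resolvents $(H_0(\kappa)-z)^{-1}$, $(H(\kappa)-z)^{-1}$ and the potential $W_\kappa$ depend norm--continuously on $\kappa$. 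This gives the LAP bound uniformly for $(\lambda,\kappa)\in\Ic\times\Omega_0$, for every compact $\Omega_0\Subset\Omega$.

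Finally I would run the stationary construction of Subsection~\ref{subsec:stationary-S} fiberwise: the uniform LAP produces the boundary traces $\Gamma_{0,\kappa}(\lambda)$, the fiber $T$--operator $T_\kappa(\lambda\pm\mathrm i0)$ of \eqref{eq:T-defs-app}, the wave matrices as in \eqref{eq:Wpm}, and $S(\lambda,\kappa)=W_+(\lambda,\kappa)^*W_-(\lambda,\kappa)$ as in \eqref{eq:S-stationary}; unitarity of $S(\lambda,\kappa)$ for a.e.\ $(\lambda,\kappa)$ then follows from the fiber optical theorem \eqref{eq:optical-app} combined with \eqref{eq:stone-app}, exactly as in the non--fibered case. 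Joint continuity of $(\lambda,\kappa)\mapsto S(\lambda,\kappa)$ on $\Ic\times\Omega$ would follow by combining the H\"older continuity of $\lambda\mapsto T_\kappa(\lambda+\mathrm i0)$ on $\Ic$ (uniform on $\Omega_0$, from the LAP) with norm--continuity of $T_\kappa(z)$ in $\kappa$ for $z$ off $\Ic$, passing to the boundary via the uniform LAP and a standard equicontinuity argument, and likewise for $\Gamma_{0,\kappa}(\lambda)$ via Stone's formula. The main obstacle is precisely this uniformity in $\kappa$: one has to quantify how the fiber spectral edge $c(\kappa)$ and the fiber thresholds approach the fixed window $\Ic$, so that the Mourre constant does not collapse and the $\Cc^{1,1}$--data remain bounded on compact subsets of $\Omega$; by contrast, the measure--theoretic description of $\Omega$ and the fiberwise meaning of $W_\kappa$ under $(H0)$--$(H1)$ are comparatively routine.
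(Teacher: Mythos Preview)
Your argument is correct and follows essentially the same route as the paper: fiber decomposition, reduction of the commutator to the one--dimensional half--line case (so that $H_0(\kappa)\in\Cc^2_{\mathrm{loc}}(A_\perp)$), exclusion of a null set of bad $\kappa$, uniform Mourre/LAP on compacts of $\Omega$, and the stationary construction fiberwise. Your write--up is in fact more explicit than the paper's---you identify $H_0(\kappa)=\Delta_{\N}^{\,r_d}+c(\kappa)I$, describe $\Omega$ via level sets of the real--analytic shift $c(\kappa)$, and track the dependence of the ABG constants---whereas the paper simply asserts continuity of the Mourre constant and invokes dominated convergence for continuity of $S(\lambda,\kappa)$; both routes are equivalent.
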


\begin{proof}[Proof of Theorem~\ref{thm:fiber-LAP}]
Applying $\Fc_\parallel$ yields the direct integral decomposition above.
The commutator with $A_\perp$ coincides with the bulk commutator in the perpendicular direction, with coefficients continuous in $\kappa$ and uniformly bounded on $\T^{d-1}$.
Thus $H_{0}(\kappa)\in\Cc^2_{\mathrm{loc}}(A_\perp)$, and away from thresholds (which form a zero-measure set in $\kappa$ for fixed interior $\Ic$) the Mourre constant $c_\Ic(\kappa)$ depends continuously on $\kappa$.
Hence there exists a full-measure $\Omega$ with $c_\Ic(\kappa)\ge c_\Ic>0$ uniformly on compact subsets of $\Omega$.
The perturbations $W_\kappa$ inherit $\Cc^{1,1}_{\mathrm{loc}}(A_\perp)$ from \textbf{(H$_0$)}--\textbf{(H$_1$)}.
The localized LAP follows from \cite[Thm.~7.4.1]{ABG}, and the unitarity and continuity of $S(\lambda,\kappa)$ follow from the stationary construction fiberwise together with dominated convergence in $\kappa$.
\end{proof}

\subsubsection{Continuity in the fractional exponents}\label{subsubsec:continuity-r}
\begin{proposition}\label{prop:nr-cont}
Let $\mathcal R\subset\{\vec r\in\R^d:\ \min_j r_j>r_*>-1\}$ be a compact set.
Then, for every $z\in\C\setminus\R$, the map
\[
\vec r\ \longmapsto\ (H_0(\vec r)-z)^{-1}
\]
is norm--continuous as an operator on $\ell^2(\N^d)$.

Moreover, assume that
\[
\mathcal I\Subset
\bigcap_{\vec r\in\mathcal R}
\Big(\sigma(H_0(\vec r))^\circ
\setminus \Thr(\Delta_{\Z^d}^{\vec r})\Big).
\]
Then the Mourre and limiting--absorption--principle constants can be chosen
\emph{uniformly} in $\vec r\in\mathcal R$,
and the scattering matrix
$\vec r\mapsto S_{\vec r}(\lambda)$
is strongly continuous on $\mathcal I$
for almost every $\lambda$.
\end{proposition}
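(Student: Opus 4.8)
The plan is to isolate the $\vec r$--dependence into three ingredients that vary continuously on the compact set $\mathcal R$: the Fourier symbol $h_{\vec r}$ on $\T^d$, the boundary correction $K_{\vec r}$ of \eqref{eq:boundary-restriction}, and the commutator data attached to $A_{\N^d}$. First I would establish the norm--resolvent continuity asserted in the first half. On $\ell^2(\Z^d)$ the operator $\Delta_{\Z^d}^{\vec r}$ is, in Fourier variables, multiplication by $h_{\vec r}(k)=\sum_j(2-2\cos k_j)^{r_j}$, so $(\Delta_{\Z^d}^{\vec r}-z)^{-1}$ is multiplication by $(h_{\vec r}(k)-z)^{-1}$ and the operator norm of the resolvent difference between $\vec r$ and $\vec r'$ equals $\sup_{k\in\T^d}|h_{\vec r}(k)-h_{\vec r'}(k)|\,|h_{\vec r}(k)-z|^{-1}\,|h_{\vec r'}(k)-z|^{-1}$. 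Splitting $\T^d$ into the region $\{h_{\vec r}\le M\}$, where $(\vec r,k)\mapsto h_{\vec r}(k)$ is uniformly continuous (the sign pattern of $\vec r$ being locally constant on $\mathcal R$, so the singular locus $\bigcup_{r_j<0}\{k_j=0\}$ is $\vec r$--independent and excluded from this region), and its complement, where both resolvent factors are $O(M^{-1})$, I let $M\to\infty$ and then $\vec r'\to\vec r$ to get norm--resolvent continuity of $\Delta_{\Z^d}^{\vec r}$ on $\ell^2(\Z^d)$. Passing to the compression $\mathfrak R_+\Delta_{\Z^d}^{\vec r}\mathfrak J_+$ only adds boundary terms localized in a fixed collar and built from the same holomorphic functional calculus (as in Steps~1--2 of the proof of Proposition~\ref{prop:K-support-compact}, using the Combes--Thomas decay of \cite{CombesThomas1973}), which are norm--continuous in $\vec r$ for the same reason; and $K_{\vec r}$ enters only through the compact combination $K_{\vec r}\big(\mathfrak R_+\Delta_{\Z^d}^{\vec r}\mathfrak J_+-\mathrm i\big)^{-1}$ of Corollary~\ref{cor:K-relative}, whose $\vec r$--continuity follows from the series \eqref{eq:Kr-series}, its coefficients $\binom{r}{h}2^{\,r-h}$ being continuous in $r$ with a tail uniformly summable over $\mathcal R$. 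The second resolvent identity then yields norm--continuity of $\vec r\mapsto(H_0(\vec r)-z)^{-1}$, and of $\vec r\mapsto(H(\vec r)-z)^{-1}$ since $W$ is a fixed bounded perturbation.

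Next I would upgrade to uniform constants. The threshold set $\Thr(\Delta_{\Z^d}^{\vec r})$ is the finite set of critical values of $h_{\vec r}$ (Proposition~\ref{prop:Thr-Nd-via-bulk}), so by continuity of $\vec r\mapsto h_{\vec r}$ and compactness of $\mathcal R$ the number $\inf_{\vec r\in\mathcal R}\dist\big(\mathcal I,\Thr(\Delta_{\Z^d}^{\vec r})\big)$ is strictly positive; hence the bulk Mourre constant of \cite[Theorem~2.20]{At} on $\mathcal I$, which depends on $\vec r$ only through $h_{\vec r}$ and is bounded below in terms of that distance and of $\mathcal I$ alone, is bounded below uniformly on $\mathcal R$. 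By the compression identity, Lemma~\ref{lem:compression-C2}, and the (relative) compactness of $K_{\vec r}$ (Corollary~\ref{cor:K-relative}), this transfers to $H_0(\vec r)$ on $\mathcal I$ modulo a compact remainder, and then to $H(\vec r)$ because $\chi(H(\vec r))-\chi(H_0(\vec r))$ is compact; the local $\mathcal C^{1,1}(A_{\N^d})$ bounds of Proposition~\ref{prop:C2-joint-Nd} are likewise uniform in $\vec r$, the one--dimensional multipliers $g_{r_j}$ and boundary operators $B_{r_j}$ of Lemma~\ref{lem:C2-1D-N} depending continuously on $r_j$. Since the constants in \cite[Theorems~7.4.1--7.5.1]{ABG} depend only on the Mourre constant, the local $\mathcal C^{1,1}$ norm and $\mathcal I$, the localized LAP holds with constants uniform in $\vec r\in\mathcal R$.

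Finally, for the scattering matrix I would feed these two facts into the stationary formula \eqref{eq:S-stationary}, $S_{\vec r}(\lambda)=I-2\pi\mathrm i\,\Gamma_{0,\vec r}(\lambda)\,T_{\vec r}(\lambda+\mathrm i0)\,\Gamma_{0,\vec r}(\lambda)^*$, with $T_{\vec r}(z)=W-WR_{\vec r}(z)W$ as in \eqref{eq:T-defs-app} and $\Gamma_{0,\vec r}(\lambda)^*\Gamma_{0,\vec r}(\lambda)=\tfrac1\pi\,\Im R_{0,\vec r}(\lambda+\mathrm i0)$ as in \eqref{eq:stone-app}. Interpolating the norm--resolvent continuity off the real axis (locally uniform in $z$ and in $\vec r$ by the Fourier estimates of the first step) against the Hölder--in--$\eta$ bound of the LAP --- uniform in $\vec r$ by the second step --- shows that the boundary values $R_{\vec r}(\lambda+\mathrm i0)$ and $R_{0,\vec r}(\lambda+\mathrm i0)$ depend norm--continuously on $\vec r$ as maps $\Hc_s\to\Hc_{-s}$, uniformly for $\lambda\in\mathcal I$; hence so do $T_{\vec r}(\lambda+\mathrm i0)$ and $\Gamma_{0,\vec r}(\lambda)^*\Gamma_{0,\vec r}(\lambda)$. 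After fixing a $\vec r$--independent measurable model for the fibers $\{\mathscr H_\lambda\}$ and checking that the identification maps vary continuously with $\vec r$, one obtains strong continuity of $\vec r\mapsto S_{\vec r}(\lambda)$ for almost every $\lambda\in\mathcal I$.

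The main obstacle is the first step in the presence of negative exponents: there $\Delta_{\Z^d}^{\vec r}$ and $K_{\vec r}$ are unbounded, the difference $\Delta_{\Z^d}^{\vec r}-\Delta_{\Z^d}^{\vec r'}$ is not relatively bounded, and no naive resolvent expansion is available --- the argument has to be run entirely at the level of the bounded multiplier $(h_{\vec r}(k)-z)^{-1}$, with the extra subtlety that near $\{k_j\to 0,\ r_j<0\}$ the symbol blows up, which is exactly why the signs of the $r_j$ must be locally constant on $\mathcal R$ (i.e. $\mathcal R$ must avoid the hyperplanes $\{r_j=0\}$, the standing hypothesis behind the threshold description). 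A secondary point is verifying that the Mourre constant supplied by \cite{At} is a genuinely continuous function of $\vec r$ rather than merely positive for each fixed $\vec r$; this reduces to continuity of the critical--value and gradient--lower--bound data of $h_{\vec r}$, which is routine once the sign pattern is fixed.
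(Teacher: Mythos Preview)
Your proposal is correct and follows essentially the same route as the paper's proof: Fourier--symbol continuity of $h_{\vec r}$ for the bulk resolvent, norm--continuity of the collar--supported correction $K_{\vec r}$, uniform Mourre/LAP from threshold avoidance and compactness of $\mathcal R$, and then the stationary formula \eqref{eq:S-stationary} for the $S$-matrix. You are in fact more careful than the paper on two points the brief proof there glosses over --- the splitting argument needed when some $r_j<0$ (where $h_{\vec r}$ is unbounded on $\T^d$ and the statement ``continuous in $(\vec r,k)$ on $[r_*,R]^d\times\T^d$'' is literally false), and the need for the sign pattern of $\vec r$ to be constant on $\mathcal R$ --- so your version is, if anything, a sharpening of the same argument.
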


\begin{proof}
In Fourier variables on $\Z^d$, $H_{0,\Z^d}(\vec{\mathrm{r}})$ acts as multiplication by $h_{\vec{\mathrm{r}}}(k)=\sum_j(2-2\cos k_j)^{r_j}$, continuous in $(\vec{\mathrm{r}},k)$ on $[r_*,R]^d\times\T^d$.
Hence $\vec{\mathrm{r}}\mapsto H_{0,\Z^d}(\vec{\mathrm{r}})$ and its resolvent are norm-continuous.
The half-space operator is a compression plus a compact boundary correction $K_{\vec{\mathrm{r}}}$ supported in a fixed collar; $\vec{\mathrm{r}}\mapsto K_{\vec{\mathrm{r}}}$ is norm-continuous as a finite linear combination of difference operators with continuous coefficients.
Thus $\vec{\mathrm{r}}\mapsto (H_{0}(\vec{\mathrm{r}})-z)^{-1}$ is norm-continuous.
Uniform Mourre/LAP on $\Ic$ follows from uniform avoidance of thresholds and compactness of $\mathcal R$.
The stationary formula \eqref{eq:S-stationary} then yields strong continuity of $S_{\vec{\mathrm{r}}}(\lambda)$.
\end{proof}

\begin{remark}
The norm-resolvent continuity in $\vec{\mathrm{r}}$ implies inner--outer continuity of the spectra.
This is in the spirit of results on magnetic families on $\mathbb{Z}^d$ \cite{ParraRichard-Zd}
and of results for magnetic pseudodifferential families in the continuum \cite{AthmouniMantoiuPurice-2010}.
In particular, a Mourre estimate uniform on $\mathcal I$ yields a uniform LAP and the strong continuity of
the scattering matrix $\lambda\mapsto S_{\vec{\mathrm{r}}}(\lambda)$ on $\mathcal I$.
\end{remark}

\begin{proposition}\label{prop:finite-pp}
If $W\in \ell^{1}(\langle n\rangle^{1+\epsilon})$ for some $\epsilon>0$, then
the point spectrum of $H$ inside $\Ic$ is finite (counting multiplicities):
\[
\#\big(\sigma_{\mathrm{pp}}(H)\cap \Ic\big)<\infty.
\]
\end{proposition}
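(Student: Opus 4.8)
The plan is to read off finiteness directly from the localized Mourre estimate of Theorem~\ref{thm:mou}, combined with the virial theorem and a weak--compactness argument. First I would fix an auxiliary compact interval $\Jc$ with $\Ic\Subset\Jc^{\circ}$ and $\Jc\subset\sigma(H_{0,\N^{d}})^{\circ}\setminus\Thr(\Delta_{\Z^{d}}^{\vec{\mathrm{r}}})$, which is possible since $\Ic$ is compact and disjoint from the closed threshold set. Theorem~\ref{thm:mou} then provides $H\in\Cc^{1,1}_{\mathrm{loc}}(A_{\N^{d}})$ on $\Jc$, together with constants $c_{\Jc}>0$ and a compact operator $K_{\Jc}$, such that
\[
E_{\Jc}(H)\,\mathrm i[H,A_{\N^{d}}]_{\circ}\,E_{\Jc}(H)\ \ge\ c_{\Jc}\,E_{\Jc}(H)\ -\ K_{\Jc}
\]
in the sense of quadratic forms on $\ell^{2}(\N^{d})$. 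Next I would invoke the localized virial theorem: since $H\in\Cc^{1}_{\mathrm{loc}}(A_{\N^{d}})$ on $\Jc$, every eigenfunction $\psi$ of $H$ with eigenvalue $\lambda\in\Jc^{\circ}$ satisfies $\psi=E_{\Jc}(H)\psi$ and $\langle\psi,\mathrm i[H,A_{\N^{d}}]_{\circ}\psi\rangle=0$ (see \cite[Ch.~7]{ABG}); in the regime where some $r_{j}<0$ the commutator is only a form and the identity is read in the $\Dc(H)$--$\Dc(H)^{*}$ pairing, with the form--commutator conventions of \cite{S,AF}.

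The contradiction argument is then short. Suppose $\#\big(\sigma_{\mathrm{pp}}(H)\cap\Ic\big)=\infty$ counting multiplicities, and pick an orthonormal sequence $(\psi_{n})$ of eigenfunctions, $H\psi_{n}=\lambda_{n}\psi_{n}$ with $\lambda_{n}\in\Ic$. By compactness of $\Ic$, extract a subsequence with $\lambda_{n}\to\lambda_{\infty}\in\Ic\subset\Jc^{\circ}$; then for large $n$ the $\lambda_{n}$ lie in a fixed compact subinterval of $\Jc^{\circ}$, so $\psi_{n}=E_{\Jc}(H)\psi_{n}$. Orthonormality forces $\psi_{n}\rightharpoonup 0$ weakly in $\ell^{2}(\N^{d})$, whence $\langle\psi_{n},K_{\Jc}\psi_{n}\rangle\to 0$ by compactness of $K_{\Jc}$. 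Applying the Mourre estimate to $\psi_{n}$ and inserting the virial identity yields
\[
0=\langle\psi_{n},\mathrm i[H,A_{\N^{d}}]_{\circ}\psi_{n}\rangle\ \ge\ c_{\Jc}\|\psi_{n}\|^{2}-\langle\psi_{n},K_{\Jc}\psi_{n}\rangle\ =\ c_{\Jc}-\langle\psi_{n},K_{\Jc}\psi_{n}\rangle\ \longrightarrow\ c_{\Jc}>0,
\]
a contradiction. Hence $\sigma_{\mathrm{pp}}(H)\cap\Ic$ is finite with finite total multiplicity; equivalently, eigenvalues of $H$ in $\Jc^{\circ}$ cannot accumulate at an interior point, so only finitely many lie in the compact subset $\Ic$.

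The step I expect to be the main obstacle is the virial identity in the unbounded case ($r_{j}<0$ for some $j$), where $\mathrm i[H,A_{\N^{d}}]_{\circ}$ is merely a form: one must check that $\langle\psi,\mathrm i[H,A_{\N^{d}}]_{\circ}\psi\rangle$ is well defined and vanishes for genuine $\ell^{2}$ eigenfunctions $\psi=E_{\Jc}(H)\psi\in\Dc(H)$, using that $E_{\Jc}(H)\,\mathrm i[H,A_{\N^{d}}]_{\circ}\,E_{\Jc}(H)$ is bounded; everything else is standard Mourre machinery. The core argument above already goes through under \textbf{(H0)}--\textbf{(H1)} alone; the stronger hypothesis $W\in\ell^{1}(\langle n\rangle^{1+\epsilon})$ is the one that, via the localized LAP, makes the Birman--Schwinger operator $\lambda\mapsto|W|^{1/2}R_{0}(\lambda+\mathrm i0)\,(\mathrm{sgn}\,W)\,|W|^{1/2}$ norm--continuous and compact--operator--valued on $\Ic$ (using boundedness of $|W|^{1/2}\langle\Lambda(Q)\rangle^{(1+\epsilon)/2}$), with eigenfunctions of $H$ at $\lambda$ corresponding to nonzero solutions of $\phi=-|W|^{1/2}R_{0}(\lambda+\mathrm i0)(\mathrm{sgn}\,W)|W|^{1/2}\phi$; combined with the no--accumulation just established, this upgrades the finiteness to an explicit bound on $\#(\sigma_{\mathrm{pp}}(H)\cap\Ic)$, consistent with $\chi(H)-\chi(H_{0})\in\Sone$ and the spectral--shift count of Theorem~\ref{thm:BK}.
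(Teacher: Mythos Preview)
Your argument is correct: the virial identity combined with the localized Mourre estimate of Theorem~\ref{thm:mou} and the weak--compactness trick is the textbook route to finiteness of embedded eigenvalues (cf.\ \cite[Cor.~7.2.11]{ABG}), and you handle the form--commutator subtlety in the unbounded case appropriately. It is, however, a genuinely different argument from the paper's. The paper proceeds via trace ideals: it invokes Theorem~\ref{thm:BK} to get $\chi(H)-\chi(H_{0})\in\mathfrak{S}_{1}$, passes to $E_{\Ic}(H)-E_{\Ic}(H_{0})\in\mathfrak{S}_{1}$ by Helffer--Sj\"ostrand, and then reads off finiteness of the point spectrum from the fact that $H_{0}$ is purely a.c.\ on $\Ic$. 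That line of reasoning uses the strong hypothesis $W\in\ell^{1}(\langle n\rangle^{1+\epsilon})$ in an essential way (it is what puts the difference of functional calculi in trace class), whereas your Mourre--virial argument, as you correctly observe, already goes through under \textbf{(H0)}--\textbf{(H1)} alone. So your proof is both more elementary within the commutator framework and strictly more general; the paper's approach buys instead a direct link to the spectral shift function and Birman--Krein machinery, which is useful for the counting and trace formulas of \S\ref{subsubsec:BK} but is overkill for mere finiteness.
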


\begin{proof}
Let $\chi\in C_c^\infty(\Ic)$. By Theorem~\ref{thm:BK},
$\chi(H)-\chi(H_{0})\in\mathfrak S_1$. By the Helffer--Sjöstrand functional calculus, this implies
$E_\Ic(H)-E_\Ic(H_{0})\in\mathfrak S_1$. On the interior window $\Ic$, the free operator $H_0$ is purely absolutely continuous, hence $E_\Ic(H_{0})$ has no eigenvalues. Thus $E_\Ic(H)$ differs from $E_\Ic(H_{0})$ by a trace-class operator, so its range is finite dimensional. Therefore the point spectrum of $H$ inside $\Ic$ consists of finitely many eigenvalues with finite multiplicity.
\end{proof}
\begin{remark}\label{rem:spectral-finiteness}
The fact that $\#\bigl(\sigma_{\mathrm{pp}}(H)\cap\mathcal I\bigr)<\infty$ follows from the uniform Mourre estimate on $\mathcal I$ combined with the limiting absorption principle (LAP). These two ingredients yield compactness of the localized resolvent $\chi(H)(H - \lambda \pm i\varepsilon)^{-1}\chi(H)$ for $\chi\in C_c^\infty(\mathcal{I})$, which in turn implies that $H$ has only finitely many eigenvalues (each of finite multiplicity) in the interval $\mathcal I$.

In the continuous magnetic setting, similar finiteness results have been derived using Schatten--von Neumann criteria. For instance, Athmouni and Purice \cite{AthmouniPurice-CPDE-2018} exploit the framework of magnetic Weyl calculus to place Birman--Schwinger-type operators in trace ideals $\mathfrak S_p$ and deduce spectral finiteness.

For a general background on trace ideals, Birman--Schwinger principles, and the Mourre theory, we refer the reader to \cite{Simon-TraceIdeals} and \cite[Vol.~IV]{RS}.
\end{remark}

\end{document}